\newtheorem{theorem}{Theorem}[subsection]
\newtheorem{lemma}[theorem]{Lemma}
\newtheorem{proposition}[theorem]{Proposition}
\theoremstyle{definition}
\theoremstyle{remark}
\newtheorem{remark}[theorem]{Remark}
\def\Z{\mathbf{Z}}
\def\Q{\mathbf{Q}}
\def\F{\mathbf{F}}
\def\P{\mathbf{P}}
\def\A{\mathbf{A}}
\def\T{\mathbf{T}}
\let\ms\mathscr
\let\mf\mathfrak
\let\mc\mathcal
\let\wt\widetilde
\let\ol\overline
\let\ul\underline
\let\lbb\llbracket
\let\rbb\rrbracket
\let\wh\widehat
\DeclareMathOperator{\Hom}{Hom}
\DeclareMathOperator{\Tor}{Tor}
\DeclareMathOperator{\Sym}{Sym}
\DeclareMathOperator{\Spec}{Spec}
\DeclareMathOperator{\MaxSpec}{MaxSpec}
\DeclareMathOperator{\tr}{tr}
\DeclareMathOperator{\im}{im}
\DeclareMathOperator{\codim}{codim}
\DeclareMathOperator{\Gal}{Gal}
\DeclareMathOperator{\val}{val}
\DeclareMathOperator{\uHom}{\underline{Hom}}
\newcommand{\GL}{\mathrm{GL}}
\newcommand{\SL}{\mathrm{SL}}
\newcommand{\lw}[1]{{\textstyle \bigwedge}{}^{#1}\,}
\newcommand{\univ}{\mathrm{univ}}
\newcommand{\un}{\mathrm{un}}
\newcommand{\red}{\mathrm{red}}
\newcommand{\ord}{\mathrm{ord}}
\newcommand{\loc}{\mathrm{loc}}
\def\mat#1#2#3#4{\left( \begin{array}{cc} #1 & #2 \\ #3 & #4 \end{array} \right)}
\title{Singularities of ordinary deformation rings}
\author{Andrew Snowden}
\thanks{The author was partially supported by NSF fellowship DMS-0902661.}
\date{November 15, 2011}
\begin{document}

\begin{abstract}
Let $R^{\univ}$ be the universal deformation ring of a residual representation of a local Galois group.  Kisin
showed that many loci in $\MaxSpec(R^{\univ}[1/p])$ of interest are Zariski closed, and gave a way to study the
generic fiber of the corresponding quotient of $R^{\univ}$.  However, his method gives little information
about the quotient ring before inverting $p$.  We give a method for studying this quotient in certain cases, and
carry it out in the simplest non-trivial case.  Precisely, suppose that $V_0$ is the trivial two dimensional
representation and let $R$ be the unique $\Z_p$-flat and reduced quotient of $R^{\univ}$ such that $\MaxSpec(R[1/p])$
consists of ordinary representations with Hodge--Tate weights 0 and 1.  We describe the functor of points of
(a slightly modified version of) $R$ and show that the irreducile components of $\Spec(R)$ are normal,
Cohen--Macaulay and not Gorenstein.  This has two well-known applications to global deformation rings:
first, a global deformation ring employing these local conditions is torsion-free; and second, Kisin's $R[1/p]=\T[1/p]$
theorem can be upgraded in this setting to an $R=\T$ theorem.
\end{abstract}

\maketitle
\tableofcontents

\section{Introduction}

Let $F$ be a finite extension of $\Q_p$, let $k$ be a finite field and let $V_0$ be a finite dimensional $k$-vector
space carrying a continuous representation of the absolute Galois group $G_F$.  Let $\mc{O}$ be a finite totally
ramified extension of $W(k)$.  There
is then a universal ring $R^{\univ}$ parameterizing (framed) deformations of $V_0$ to $\mc{O}$-algebras.  There are
many loci in $\MaxSpec(R^{\univ}[1/p])$ which are of interest:  for example, one has the locus of crystalline
representations with given Hodge--Tate weights.

For reasons that arose in the study of modularity lifting, one would like to be able to show that certain loci in
$\MaxSpec(R^{\univ}[1/p])$ are Zariski closed, and then study the corresponding reduced quotient of $R^{\univ}[1/p]$.
In favorable cases, one can impose evident deformation conditions and show that the resulting problem is representable
by a quotient of $R^{\univ}$ whose generic fiber has for its maximal ideals the locus in question; one can then study
this quotient of $R^{\univ}$ through its moduli-theoretic description.  However, this approach is not always
viable.

Kisin was the first to obtain general results in this direction.  To explain his method we must introduce some
notation.  Let $X$ be a subset of $\MaxSpec(R^{\univ}[1/p])$.  When Kisin's method is applicable, it produces a
projective morphism $\Theta : Z \to \Spec(R^{\univ})$ of schemes such that $\Theta[1/p]$ is a closed immersion
inducing a bijection between the closed points of $Z[1/p]$ and $X$.  Furthermore, the formal completion
of $Z$ along the special fiber of $\Theta$ has an easily described functor of points.  Let $\Spec(R)$ be the
scheme-theoretic image of $\Theta$.  Then, with
some assumptions on $Z$ which usually hold, $R$ is the unique $\mc{O}$-flat reduced quotient of $R^{\univ}$ with
$\MaxSpec(R[1/p])=X$.  Since $\Theta$ induces an isomorphism $Z[1/p] \to \Spec(R[1/p])$, one can study $R[1/p]$
using the moduli problem solved by the completion of $Z$.

Although the above method is well-suited to proving the existence of the ring $R$ and studying its generic fiber,
it does not yield much information about $R$ itself.  For instance, $R$ is defined as a scheme-theoretic image and so
it is not at all clear how to describe its functor of points.

In this paper, we give a method for analyzing $R$ itself, in certain cases.  We carry this method out in the simplest
non-trivial case,
where $V_0$ is the trivial two dimensional representation and $X$ is a certain set of ordinary representations.
Note that this is not one of the ``favorable cases'' mentioned in the second paragraph, since $V_0$ is trivial.
We are able to show that $R$ (or closely related rings) is normal, Cohen--Macaulay and not Gorenstein.  The
Cohen--Macaulay property has well-known
global consequences: it implies that global deformation rings using this local condition are torsion-free, and can be
used to improve Kisin's $R[1/p]=\T[1/p]$ theorem to an $R=\T$ theorem.  We believe that the method we use here
should work for other two dimensional ordinary deformations, and may be feasible for higher dimensional ordinary
representations.  It is not clear to us if it will be useful in other cases, however.

\subsection{Outline of the method in general}
\label{ss:out}

Let $X$ be an equidimensional Zariski closed subset of $\MaxSpec(R^{\univ}[1/p])$ and let $R$ be the unique
$\mc{O}$-flat reduced quotient of $R^{\univ}$ such that $\MaxSpec(R[1/p])=X$.  To analyze $R$ we proceed along
the following steps:
\begin{enumerate}
\item Come up with a list of equations which make sense in arbitrary deformations of $V_0$ and which hold on the set
$X$.  Let $R^{\dag}$ be the quotient of $R^{\univ}$ by these equations.
\item Show that the map $R^{\univ} \to R$ factors through $R^{\dag}$, and that the resulting map $R^{\dag}[1/p]_{\red}
\to R[1/p]$ is an isomorphism.  This should not be difficult if one found enough equations in step (a).
One should understand $R[1/p]$ from Kisin's method, and so one should now have some understanding of $R^{\dag}[1/p]$.
\item Show that $R^{\dag} \otimes_{\mc{O}} k$ is reduced, equidimensional of the same dimension as $R^{\dag}[1/p]$ and
has the same number of minimal primes as $R^{\dag}[1/p]$.  As far as we know, there is no reason that this must be
true; however, if it is true, it may be tractable to prove, as $R^{\dag} \otimes_{\mc{O}} k$ represents an explicit
deformation problem on $k$-algebras.
\item Appeal to abstract commutative algebra facts to conclude that $R^{\dag}$ is $\mc{O}$-flat and reduced
(see Propositions~\ref{flat} and~\ref{reduced}).
\item Conclude that $R^{\dag} \to R$ is an isomorphism, as both are $\mc{O}$-flat and reduced and the map
$R^{\dag}[1/p]_{\red} \to R[1/p]$ is an isomorphism.
\end{enumerate}
If the above steps can be carried out then one has a moduli-theoretic description of $R$, namely, the equations used
to define $R^{\dag}$.  This can then be used to study $R$.

\subsection{Outline of the method in a specific case}
\label{ss:out2}

Let us give a more detailed outline of the above method in a specific, somewhat easy, case.  Let $V_0$ be the trivial
two dimensional representation and let $X$ be the set of representations which are conjugate to one of the form
\begin{displaymath}
\mat{\chi}{\ast}{}{1},
\end{displaymath}
where $\chi$ denotes the cyclotomic character.  We assume that $\chi=1 \pmod{p}$, otherwise $X$ is empty.  There
are two obvious families of equations that make sense in any deformation and that hold on $X$, namely:
\begin{itemize}
\item $\tr(g)=\chi(g)+1$ for $g \in G_F$.
\item $(g-1)(g'-1)=(\chi(g)-1)(g'-1)$ for $g, g' \in G_F$.
\end{itemize}
Let $R^{\dag}$ be the quotient of $R^{\univ}$ by these equations.  To be more precise, let $\rho^{\univ}:G_F \to
\GL_2(R^{\univ})$ be the universal framed deformation.  Let $I$ be the ideal of $R^{\univ}$ generated by
the elements
\begin{displaymath}
\tr(\rho^{\univ}(g))-\chi(g)-1,
\end{displaymath}
as $g$ varies over $G_F$, as well as the entries of the matrix
\begin{displaymath}
(\rho^{\univ}(g)-1)(\rho^{\univ}(g')-1)-(\chi(g)-1)(\rho^{\univ}(g')-1),
\end{displaymath}
as $g$ and $g'$ vary over $G_F$.  Then $R^{\dag}=R^{\univ}/I$.  Thus $R^{\dag}$ is described as a quotient of
$R^{\univ}$ by an explicit, though uncountable, set of relations.  Nonetheless, it is clear what the functor of
points of $R^{\dag}$ is.

The only really interesting remaining step in the above procedure is (c), i.e., the analysis of the ring
$R^{\dag} \otimes_{\mc{O}} k$.  To understand this ring, it suffices to understand maps $R^{\dag} \to A$ where
$A$ is a $k$-algebra.  From the equations defining $R^{\dag}$, together with the fact that $\chi=1\pmod{p}$, we
see that a map $R^{\univ} \to A$, corresponding to a deformation $V$, factors through $R^{\dag}$ if and only if
$\tr(g \vert_V)=2$ for all $g \in G_F$ and $(g-1)(g'-1)V=0$ for all $g, g' \in G_F$.  It is easy to see that
the action of $G_F$ on such a deformation $V$ factors through the abelianization of $G_F$.
Using class field theory, it is therefore possible to give a very explicit description of $\Spec(R^{\dag}
\otimes_{\mc{O}} k)$ as a certain space of tuples of matrices.
(In fact, it is (a formal completion of) the space $\mc{A}_{d+2}$ discussed in \S \ref{aspace}, with $d=[F:\Q_p]$.)
This space can be analyzed by techniques of algebraic geometry and seen to be integral, normal and Cohen--Macaulay.

All the fine points in the above method carry through in this case, and so the map $R^{\dag} \to R$ is an isomorphism.
This gives a moduli-theoretic description of $R$:  a map $R \to A$ corresponds to a framed deformation of $V_0$ to $A$
where the obvious equations given above hold.  Furthermore, since we know that $R^{\dag}$ is $\mc{O}$-flat and
$R^{\dag} \otimes_{\mc{O}} k$ is Cohen--Macaulay, we find that $R$ itself is Cohen--Macaulay.  A similar argument can
be used to show that $R$ is normal.

\subsection{Generalizations}

There are two natural directions in which one could attempt to extend the results of this paper.  First, one could
remain in the two dimensional case but work with non-ordinary representations.  This seems like it may be
difficult, as it is probably not easy to determine the equations that hold on $X$.
Second, one could remain in the ordinary case but work with higher dimensional representations.
This seems to be a more tractable avenue, since it is easy to come up with many equations that hold on $X$.
Analyzing the special fiber of $R^{\dag}$ may be difficult, however.

\subsection{Plan of paper}

We begin in \S \ref{s:alg} by establishing some miscellaneous results we will need.
In \S \ref{s:modmat}, we study certain moduli spaces of matrices.  These spaces will show up as the
special fibers of the rings $R^{\dag}$ (as we saw in \S \ref{ss:out2}), and it is crucial to know that they are
reduced and have the appropriate number of irreducible components.  In
\S \ref{s:local} we carry out the analysis of ordinary deformation rings in the local case and prove the main results
of the paper.  Finally, in \S \ref{s:global}, we give the applications to global deformation rings.

\subsection*{Acknowledgements}

I would like to thank Bhargav Bhatt, Brian Conrad and Mark Kisin for useful conversations.  I would also like to
thank Steven Sam for his comments on a draft of this paper.

\section{Preliminary results}
\label{s:alg}

In this section we establish some preliminary results that we will need later on.  These are all likely well-known
and quite possibly in the literature already.  However, we do not know convenient references for many of them, and
so include proofs.

\subsection{Rings associated with vector bundles}

Let $X$ be a geometrically integral scheme proper over $k$, let $\eta$ be a vector bundle on $X$ and let $Z$ be the
total space of the dual of $\eta$.  Put
\begin{displaymath}
R=\Gamma(Z, \mc{O}_Z)=\Gamma(X, \Sym(\eta)).
\end{displaymath}
In this section we prove two results about $R$, the first concerning its singularities and the second its minimal
free resolution.  We will only apply these results when $X=\P^1$, but the proofs are just as easy in the general
case.  The first result is the following (the notation is explained following the proposition):

\begin{proposition}
\label{geo-1}
Assume that $X$ is normal and Cohen--Macaulay, that $\eta$ is ample and generated by its global sections and that
\begin{displaymath}
H^i(X, \Sym(\eta))=H^i(X, \Sym(\eta) \otimes \det(\eta) \otimes \omega_X)=0
\end{displaymath}
for $i>0$.  Then the map $Z \setminus Z_0 \to \Spec(R) \setminus \{0\}$ is an isomorphism and $R$ is Cohen--Macaulay.
Furthermore, if
\begin{displaymath}
\dim{H^0(X, \det(\eta) \otimes \omega_X)}>1
\end{displaymath}
then the local ring of $R$ at 0 is not Gorenstein.
\end{proposition}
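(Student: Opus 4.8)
The plan is to recognise $R$ as the section ring of an ample line bundle on a projective variety and then invoke the standard machinery of affine cones and local cohomology. Set $Y=\P(\eta)=\on{Proj}_X(\Sym\eta)$, let $\pi\colon Y\to X$ be the projection and $L=\mc O_Y(1)$; then $\pi_*L^m=\Sym^m\eta$ for $m\ge0$, so $R=\bigoplus_{m\ge0}H^0(Y,L^m)$ is the section ring of $L$. Since $\eta$ is ample, $L$ is ample on $Y$; since $X$ is normal, Cohen--Macaulay and geometrically integral and $\pi$ is smooth, $Y$ is normal, Cohen--Macaulay, integral and projective; and $R$ is a finitely generated graded $k$-algebra with $R_0=H^0(X,\mc O_X)=k$, whose graded maximal ideal $\mf m=R_{+}$ defines the vertex $0\in\Spec R$, with $\dim R=\dim Y+1=\dim X+\on{rk}\eta$.

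\emph{Step 1: the punctured spectrum.} A nonzero vector $v$ in a fibre $\eta^\vee_x$ of $Z$ spans a line, i.e.\ a point $y\in Y$ over $x$, and under the tautological inclusion $\mc O_Y(-1)\hookrightarrow\pi^*\eta^\vee$ the fibre of $\mc O_Y(-1)$ at $y$ is exactly that line; the rule $(x,v)\mapsto(y,v)$ identifies $Z\setminus Z_0$ with the total space of $\mc O_Y(-1)$ minus its zero section. Its affinization computes $\bigoplus_{m\ge0}H^0(Y,L^m)=R$ and factors the affinization $Z\to\Spec R$; by the standard relation between the total space of the inverse of an ample line bundle on a normal projective variety and its affine cone --- the contraction collapses exactly the zero section onto the vertex and is an isomorphism elsewhere --- the map $Z\setminus Z_0\to\Spec R\setminus\{0\}$ is an isomorphism. (The global generation of $\eta$ enters only to guarantee that $Z\to\Spec R$ carries nothing beyond $Z_0$ to the vertex.)

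\emph{Step 2: Cohen--Macaulayness.} Since $R$ is Cohen--Macaulay precisely when $H^i_{\mf m}(R)=0$ for all $i<\dim R$, and since for a section ring $H^0_{\mf m}(R)=H^1_{\mf m}(R)=0$ and $H^i_{\mf m}(R)\cong\bigoplus_{m\in\Z}H^{i-1}(Y,L^m)$ for $i\ge2$, it suffices to show $H^{i-1}(Y,L^m)=0$ for $2\le i\le\dim X+\on{rk}\eta-1$ and every $m$. I would compute $H^{\bullet}(Y,L^m)$ from the Leray spectral sequence for $\pi$, using that $R^q\pi_*L^m$ is concentrated in degree $q=0$ with value $\Sym^m\eta$ when $m\ge0$, vanishes entirely when $-\on{rk}\eta<m<0$, and is concentrated in degree $q=\on{rk}\eta-1$ with value $(\Sym^{-m-\on{rk}\eta}\eta)^\vee\otimes(\det\eta)^\vee$ when $m\le-\on{rk}\eta$ (relative duality for the projective bundle, whose relative dualizing sheaf is $\mc O_Y(-\on{rk}\eta)\otimes\pi^*\det\eta$). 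For $m\ge0$ one gets $H^{i-1}(X,\Sym^m\eta)$, which vanishes by the first cohomological hypothesis since $i-1\ge1$. For $m\le-\on{rk}\eta$ one gets $H^{i-\on{rk}\eta}(X,(\Sym^\ell\eta)^\vee\otimes(\det\eta)^\vee)$ with $\ell=-m-\on{rk}\eta\ge0$; as $X$ is proper and Cohen--Macaulay, Serre duality makes this dual to $H^{\dim X-i+\on{rk}\eta}(X,\Sym^\ell\eta\otimes\det\eta\otimes\omega_X)$, and $\dim X-i+\on{rk}\eta\ge1$, so it vanishes by the second hypothesis. Hence $H^i_{\mf m}(R)=0$ for $i<\dim R$, so $R$ is Cohen--Macaulay.

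\emph{Step 3: failure of Gorensteinness.} Running the same computation at $i=\dim R$, the $m\ge0$ and $-\on{rk}\eta<m<0$ terms vanish as before, and the $m\le-\on{rk}\eta$ terms now produce $H^{\dim X}(X,(\Sym^\ell\eta)^\vee\otimes(\det\eta)^\vee)\cong H^0(X,\Sym^\ell\eta\otimes\det\eta\otimes\omega_X)^\vee$, placed in degree $-(\ell+\on{rk}\eta)$. By graded local duality the graded canonical module is therefore $\omega_R\cong\bigoplus_{\ell\ge0}H^0(X,\Sym^\ell\eta\otimes\det\eta\otimes\omega_X)$, with the $\ell$-th summand in degree $\ell+\on{rk}\eta$; in particular its lowest nonzero graded piece is $H^0(X,\det\eta\otimes\omega_X)$. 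As $R$ is Cohen--Macaulay, it is Gorenstein iff $\omega_R$ is free of rank one, in particular cyclic, which forces that lowest graded piece to be one-dimensional over $k$; the hypothesis $\dim H^0(X,\det\eta\otimes\omega_X)>1$ contradicts this, so the local ring of $R$ at $0$ is not Gorenstein. The only step demanding real care --- the main obstacle --- is the cohomological bookkeeping of Steps 2--3: setting up the local cohomology description of the section ring, tracking the relative dualizing sheaf of $\pi$ through Leray, and matching the two Koszul-type hypotheses across Serre duality on $X$; everything else, including Step 1 and the implication ``Cohen--Macaulay with non-cyclic canonical module $\Rightarrow$ not Gorenstein'', is then formal.
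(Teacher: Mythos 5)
Your argument is correct, but it takes a genuinely different route from the paper's. The paper never passes to the projective bundle: it works directly with the affine resolution $f\colon Z\to\Spec R$, shows $f$ is proper and birational (Lemmas~\ref{geo-1a} and~\ref{geo-1b}), and then applies Grothendieck duality for $f$ once (Lemma~\ref{geo-1c}) to get $Rf_*\omega_Z=\omega_Y$, concluding simultaneously that $R$ is Cohen--Macaulay and that $\omega_R=H^0(Z,\omega_Z)=H^0(X,\Sym(\eta)\otimes\det\eta\otimes\omega_X)$; the hypotheses $H^{>0}(X,\Sym\eta)=H^{>0}(X,\Sym\eta\otimes\det\eta\otimes\omega_X)=0$ are exactly what is needed to say $Rf_*\mc O_Z=\mc O$ and $Rf_*\omega_Z=f_*\omega_Z$, with no spectral sequence. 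You instead route through $Y=\P(\eta^\vee)$ (in the paper's notation) and the section-ring picture, then compute $H^\bullet_{\mf m}(R)$ via the long exact sequence for the punctured cone, Leray for $\pi\colon Y\to X$, relative duality for the projective bundle, and Serre duality on $X$. Both methods arrive at the same identification of $\omega_R$ and the same Gorenstein criterion (lowest graded piece $H^0(X,\det\eta\otimes\omega_X)$). The paper's approach is shorter because it feeds $Z$ directly into Grothendieck duality rather than taking the detour through $Y$; your approach is the ``affine cone'' formalism, which is more familiar in some circles and makes the canonical module of $R$ appear as a graded module in a more transparent way, but at the cost of more cohomological bookkeeping. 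Two small gaps to close in a careful write-up: your Step~1 invokes the contraction statement (``collapses exactly the zero section, isomorphism elsewhere'') as a black box, whereas the paper actually proves this from global generation (properness) and normality (finite plus birational onto a normal scheme implies isomorphism); and the local-cohomology formula $H^i_{\mf m}(R)\cong\bigoplus_mH^{i-1}(Y,L^m)$ for $i\ge2$ should be justified via the identification of $\Spec R\setminus\{0\}$ with the $\G_m$-torsor over $Y$, which again uses Step~1. Neither gap is fatal, but they should be addressed.
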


Let us explain the notation and terminology used above.  The ring $R$ is naturally graded.  Let $R_+$ be the ideal of
positive degree elements.  Then $R/R_+$ is identified with $k$.  We write 0 for the point in $\Spec(R)$ corresponding
to the ideal $R_+$.  We write $\omega_Y$ for the dualizing complex of a variety $Y$ over $k$, which is a coherent
sheaf when $Y$ is Cohen--Macaulay.
A vector bundle $\eta$ on $X$ is said to be \emph{ample} if the line bundle $\mc{O}(1)$ on $\P(\eta^{\vee})$ is ample.
If $\eta$ is a line bundle then $\P(\eta^{\vee})=X$ and $\mc{O}(1)$ is just $\eta$, and so this notion of ample
corresponds to the usual one.  A direct sum of ample bundles is again ample.  We write $Z_0$ for
the image of the zero section of $\pi:Z \to X$.

Before proving the proposition we give some lemmas.  In these lemmas, we do not assume the hypotheses of the
proposition.

\begin{lemma}
\label{geo-1a}
If $\eta$ is generated by global sections then $Z \to \Spec(R)$ is proper.
\end{lemma}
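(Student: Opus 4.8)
The plan is to factor the map $\phi : Z \to \Spec(R)$ as a closed immersion followed by a base change of the proper morphism $X \to \Spec(k)$.

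First I would note that, since $R = \Gamma(Z,\mc{O}_Z) = \bigoplus_{n \ge 0} \Gamma(X,\Sym^n \eta)$, there is the canonical map $\phi : Z \to \Spec(R)$, and combining it with the structure map $\pi : Z \to X$ yields a morphism
\begin{displaymath}
(\pi, \phi) : Z \longrightarrow X \times_k \Spec(R).
\end{displaymath}
I claim this is a closed immersion. It suffices to check this over each affine open $U \subseteq X$, where the target restricts to $\Spec(\mc{O}_X(U) \otimes_k R)$, the source to $\Spec((\Sym\eta)(U))$, and the morphism is induced by the $\mc{O}_X(U)$-algebra map $\mc{O}_X(U) \otimes_k R \to (\Sym\eta)(U)$ which on the $R$-factor is the restriction $\Gamma(X,\Sym^n\eta) \to (\Sym^n\eta)(U)$. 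Now $(\Sym\eta)(U)$ is generated as an $\mc{O}_X(U)$-algebra by $\eta(U)$, and the assumption that $\eta$ is generated by global sections says precisely that $\eta(U)$ is generated as an $\mc{O}_X(U)$-module by the restrictions of the elements of $\Gamma(X,\eta) = R_1$; hence the above algebra map is surjective and $(\pi,\phi)$ is a closed immersion.

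Next I would invoke properness of $X$: since $X$ is proper over $k$, the projection $X \times_k \Spec(R) \to \Spec(R)$ is proper, being the base change of $X \to \Spec(k)$ along $\Spec(R) \to \Spec(k)$. Therefore $\phi$, as the composite of the closed immersion $(\pi,\phi)$ with this proper projection, is proper, which is what we wanted.

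There is no real obstacle in this argument; the only point requiring a moment's care is the verification that $(\pi,\phi)$ is a closed immersion, which reduces to the fact that global generation of $\eta$ means its module of sections over each affine open of $X$ is generated by restrictions of global sections. Note also that neither ampleness of $\eta$ nor the cohomological hypotheses of Proposition~\ref{geo-1} enter, consistent with the remark that the lemmas preceding that proposition do not assume its hypotheses.
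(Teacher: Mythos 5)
Your proof is correct and takes essentially the same approach as the paper: global generation of $\eta$ gives a closed immersion of $Z$ into a scheme proper over $\Spec(R)$, from which properness of $Z \to \Spec(R)$ follows. The only difference in execution is that the paper chooses finitely many generating sections to embed $Z$ as a closed subscheme of $X \times_k V$ (with $V$ a finite-dimensional affine space) and then applies the cancellation property of properness along the separated map $\Spec(R) \to V$, whereas you construct the closed immersion directly into $X \times_k \Spec(R)$, bypassing the cancellation step.
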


\begin{proof}
Since $\eta$ is generated by its global sections, it is a quotient of $V^* \otimes \mc{O}_X$ for some vector
space $V$.  Thus $Z$ is a closed subset of $X \times V$.  As the map $X \times V \to V$ is proper, we find that
the map $Z \to V$ is proper.  This map factors as $Z \to \Spec(R) \to V$, and so $Z \to \Spec(R)$ is proper.
\end{proof}

\begin{lemma}
\label{geo-1b}
Assume that $X$ is normal, that $\eta$ is ample and that $f:Z \to \Spec(R)$ is proper.  Then $f$ induces an
isomorphism $Z \setminus Z_0 \to (\Spec{R}) \setminus \{0\}$.
\end{lemma}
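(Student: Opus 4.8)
The plan is to reduce Lemma~\ref{geo-1b} to the case of an ample line bundle, where it is classical: the affine cone on $L$ and the total space of $L^{-1}$ agree away from the vertex, respectively the zero section. First I would pass to $Y=\P(\eta^\vee)$, so that $L:=\mathcal{O}_Y(1)$ is ample by hypothesis. Here $Y$ is proper and geometrically integral over $k$, so $\Gamma(Y,\mathcal{O}_Y)=k$, and the tautological inclusion of $\mathcal{O}_Y(-1)$ into the pullback of $\eta^\vee$ identifies $Z\setminus Z_0$ with the complement of the zero section $0_Y$ in the total space $\mathrm{Tot}(\mathcal{O}_Y(-1))$. Moreover $R=\bigoplus_{m\ge0}\Gamma(X,\Sym^m\eta)=\bigoplus_{m\ge0}\Gamma(Y,L^m)$ is the section ring of $L$, and under this dictionary $f$ becomes the canonical contraction $\mathrm{Tot}(\mathcal{O}_Y(-1))\to\Spec R$ collapsing $0_Y$ onto the vertex $0=V(R_+)$. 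So it suffices to show this contraction is an isomorphism over $\Spec R\setminus\{0\}$.

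Next I would use the scaling $\G_m$-action on $\mathrm{Tot}(\mathcal{O}_Y(-1))$ and the grading action on $\Spec R$, with respect to which $f$ is equivariant. Since $R_0=k$, the vertex is the unique fixed point of $\Spec R$ and every $\G_m$-orbit degenerates to it; as $0_Y\cong Y$ is proper over $k$ and $\G_m$-fixed, its image $f(0_Y)$ is a finite $\G_m$-stable set of closed points, hence equals $\{0\}$. Conversely $f^{-1}(0)$ is proper over $k$ and $\G_m$-stable, maps properly to $Y$, and its fibre over any point $y\in Y$ is a proper $\G_m$-stable closed subscheme of the affine line $\mathcal{O}_Y(-1)_y$, so it is the origin; thus $f^{-1}(0)=0_Y$, and $f$ restricts to a proper morphism $g:\mathrm{Tot}(\mathcal{O}_Y(-1))\setminus 0_Y\to\Spec R\setminus\{0\}$.

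Finally I would cover the base by distinguished opens. Since $L$ is ample, choose $n\ge1$ and sections $s_0,\dots,s_N\in\Gamma(Y,L^n)=R_n$ generating $L^n$; viewed as functions on $\mathrm{Tot}(\mathcal{O}_Y(-1))$ they have no common zero off $0_Y$, so the loci $\{s_j\ne0\}$ cover $\mathrm{Tot}(\mathcal{O}_Y(-1))\setminus 0_Y$, and since $f$ is surjective (being proper and dominant) with $f(0_Y)=\{0\}$, the opens $D(s_j)$ cover $\Spec R\setminus\{0\}$. For each $j$ one has $f^{-1}(D(s_j))=\{s_j\ne0\}$, which is the complement of the zero section in $\mathcal{O}_Y(-1)$ over $Y_{s_j}:=Y\setminus Z(s_j)$; the latter is affine because $L^n$ is ample, so $f^{-1}(D(s_j))$ is affine with coordinate ring $\bigoplus_{m\in\Z}\Gamma(Y_{s_j},L^m)$. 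On the other hand $D(s_j)=\Spec R_{s_j}$, and the standard fact that every section of a twist of $L$ over $Y_{s_j}$ extends, after multiplication by a power of $s_j$, to a global section identifies $R_{s_j}$ with this same graded ring; under these identifications the comparison homomorphism induced by $g$ is the identity, so $g$ is an isomorphism over each $D(s_j)$. Hence $g$ is an isomorphism. The genuine inputs from ampleness are confined to this last step — the existence of the sections $s_j$, the affineness of $Y_{s_j}$, and the section-extension identification of $R_{s_j}$ — and I expect the main difficulty to be nothing deeper than keeping the projective-bundle and grading conventions consistent throughout.
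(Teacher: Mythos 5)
Your proof is correct, but it takes a genuinely different route from the paper's. The paper argues directly that the restriction $f'$ of $f$ is injective on $\overline{k}$-points (using ampleness to recover $\pi(z)$ from $f(z)$), then surjective (by a dimension count), then finite (by Zariski's main theorem) and birational (by generic flatness), and finally invokes normality — the point being that $R=\Gamma(Z,\mathcal{O}_Z)$ is normal because $Z$ is a vector bundle over the normal scheme $X$ — to conclude that a finite birational map onto a normal target is an isomorphism. You instead pass to $Y=\P(\eta^\vee)$ with its ample $L=\mathcal{O}_Y(1)$, identify $R$ with the section ring of $L$ and $Z\setminus Z_0$ with $\mathrm{Tot}(L^{-1})\setminus 0_Y$, and then essentially re-derive the classical affine-cone description of $\mathrm{Proj}$: cover $\Spec R\setminus\{0\}$ by the distinguished opens $D(s_j)$ for generating sections $s_j\in\Gamma(Y,L^n)$, and use the section-extension lemma ($\Gamma(Y_{s_j},L^k)=\varinjlim_i \Gamma(Y,L^{k+in})$ for $Y$ quasi-compact quasi-separated) to identify $f^{-1}(D(s_j))$ with $D(s_j)$ on the nose. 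Your argument is more explicit and self-contained: it avoids Zariski's main theorem entirely, and, notably, it does not use the normality hypothesis on $X$ at all — suggesting the lemma holds without it, with normality appearing in the statement only because the paper's shorter proof needs it. The $\G_m$-equivariance discussion of $f^{-1}(0)$ and the properness of $g$ are not strictly needed for your final argument (you only use that $f$ is surjective and that $f(0_Y)=\{0\}$ to see the $D(s_j)$ cover the base), but they do no harm.
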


\begin{proof}
Since $\eta$ is ample,
it follows that, for $z \in Z(\ol{k})$, we have $f(z)=0$ if and only if $z \in Z_0(\ol{k})$.  Furthermore,
if $z \not\in Z_0(\ol{k})$ then one can recover $\pi(z)$ from $f(z)$; since the restriction of $f$ to each fiber of
$\pi$ is injective on $\ol{k}$-points, it follows that the restriction of $f$ to
$Z \setminus Z_0$ is injective on $\ol{k}$-points.

Let $f'$ denote the map $Z \setminus Z_0 \to (\Spec{R}) \setminus \{0\}$ induced by $f$.  As $f$ is proper, so too is
$f'$. We thus find that the image of $f'$ is a closed subscheme of $(\Spec{R}) \setminus \{0\}$.  Since $R$ is
integral and has dimension at most that of $Z$, and $f'$ is injective on $\ol{k}$-points, it follows that $f'$
is surjective.  It follows that $f'$ is finite (Zariski's main theorem) and birational (as it is
generically flat), which implies (by normality) that $f'$ is an isomorphism.
\end{proof}

\begin{lemma}
\label{geo-1c}
Let $f:\wt{Y} \to Y$ be a proper birational map of schemes over $k$, with $\wt{Y}$ Cohen--Macaulay and
$Y$ affine.  Suppose that $H^i(\wt{Y}, \mc{O}_{\wt{Y}})=H^i(\wt{Y}, \omega_{\wt{Y}})=0$
for $i>0$ and that the natural map $f^*:H^0(Y, \mc{O}_Y) \to H^0(\wt{Y}, \mc{O}_{\wt{Y}})$ is an isomorphism.  Then $Y$
is Cohen--Macaulay and $\omega_Y$ is isomorphic to $f_*(\omega_{\wt{Y}})$.
\end{lemma}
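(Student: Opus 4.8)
The plan is to compare cohomology on $Y$ and $\wt Y$ via the Leray spectral sequence for $f$, and to use duality. Since $Y$ is affine, say $Y=\Spec(A)$, Cohen--Macaulayness of $Y$ and the identification of the dualizing complex can be checked after replacing $A$ by a suitable polynomial ring over it; concretely, I would work with $\omega_Y$ as an object of the derived category and show it is concentrated in one degree. First I would push forward: because $f$ is proper and $\wt Y$ Cohen--Macaulay, $Rf_* \omega_{\wt Y}$ computes $Rf_*$ of the dualizing complex, and by Grothendieck duality for the proper map $f$ one has $Rf_* \omega_{\wt Y} \simeq \omega_Y$ (normalizing dualizing complexes compatibly, which is legitimate since $Y$ and $\wt Y$ are finite type over the field $k$). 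The hypothesis $H^i(\wt Y, \omega_{\wt Y})=0$ for $i>0$ then says, since $Y$ is affine, that $Rf_*\omega_{\wt Y}$ has no higher cohomology sheaves contributing beyond what is forced, i.e. that $R^i f_* \omega_{\wt Y}=0$ for $i>0$; hence $\omega_Y \simeq f_*\omega_{\wt Y}$ is a single coherent sheaf in degree $0$.

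Next I would run the same argument with $\mc O_{\wt Y}$ in place of $\omega_{\wt Y}$: the hypotheses $H^i(\wt Y,\mc O_{\wt Y})=0$ for $i>0$ and $H^0(Y,\mc O_Y)\xrightarrow{\sim} H^0(\wt Y,\mc O_{\wt Y})$, combined with $Y$ affine and the Leray spectral sequence $H^p(Y, R^q f_*\mc O_{\wt Y}) \Rightarrow H^{p+q}(\wt Y, \mc O_{\wt Y})$, give $R^q f_*\mc O_{\wt Y}=0$ for $q>0$ and $f_*\mc O_{\wt Y}=\mc O_Y$ (here one uses that $f$ is birational, so the natural map $\mc O_Y \to f_*\mc O_{\wt Y}$ is injective, and it is surjective because it is so on global sections and $Y$ is affine). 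Thus $Rf_*\mc O_{\wt Y}\simeq \mc O_Y$. Now apply Grothendieck duality in the form $Rf_* R\uHom_{\wt Y}(\mc O_{\wt Y}, \omega_{\wt Y}) \simeq R\uHom_Y(Rf_*\mc O_{\wt Y}, \omega_Y)$, which after the two computations above reads $\omega_Y \simeq R\uHom_Y(\mc O_Y, \omega_Y)$; this is automatic, so the real content is the previous paragraph. The point is simply that $\omega_Y$ is a coherent sheaf concentrated in degree $0$, which for a finite type $k$-scheme is exactly the statement that $Y$ is Cohen--Macaulay.

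The main obstacle is making the duality bookkeeping clean: one must fix normalizations of dualizing complexes on $Y$ and $\wt Y$ (say using a common compactification, or the $!$-pullback from $\Spec k$) so that $f^!\omega_Y\simeq \omega_{\wt Y}$, and one must know $\wt Y$ is Cohen--Macaulay to guarantee $\omega_{\wt Y}$ is itself a sheaf (not just a complex) so that the vanishing hypothesis $H^{>0}(\wt Y,\omega_{\wt Y})=0$ is a statement about a sheaf. Given that, the argument is: duality turns $Rf_*\mc O_{\wt Y}\simeq \mc O_Y$ into $Rf_*\omega_{\wt Y}\simeq \omega_Y$; the vanishing of $H^{>0}(\wt Y,\omega_{\wt Y})$ plus affineness of $Y$ forces $R^if_*\omega_{\wt Y}=0$ for $i>0$; hence $\omega_Y\simeq f_*\omega_{\wt Y}[0]$ is a sheaf in degree zero, so $Y$ is Cohen--Macaulay with dualizing sheaf $f_*\omega_{\wt Y}$. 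A mild alternative avoiding explicit duality for $f$ would be to embed $Y$ as a closed subscheme of a smooth affine $k$-scheme $P$, use $H^i_{\mathfrak m}$-type or local-cohomology criteria for Cohen--Macaulayness, and transfer the vanishing along $f$; but the Grothendieck duality route is shorter and is the one I would write up.
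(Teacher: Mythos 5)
Your proof is correct and takes essentially the same approach as the paper: apply Grothendieck duality $Rf_*R\uHom_{\wt Y}(\mc O_{\wt Y}, f^!\omega_Y) \simeq R\uHom_Y(Rf_*\mc O_{\wt Y}, \omega_Y)$, compute $Rf_*\mc O_{\wt Y}=\mc O_Y$ from the hypotheses (Leray plus affineness of $Y$), deduce $Rf_*\omega_{\wt Y}\simeq\omega_Y$, and use the $\omega$-cohomology vanishing to see this is the single sheaf $f_*\omega_{\wt Y}$, hence $Y$ is Cohen--Macaulay. One presentational slip: in your middle paragraph you say the duality application becomes ``automatic'' after plugging in the two computations, which isn't quite the right logic --- it is duality \emph{together with} $Rf_*\mc O_{\wt Y}=\mc O_Y$ that produces $Rf_*\omega_{\wt Y}\simeq\omega_Y$, as you in fact state correctly in your closing paragraph.
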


\begin{proof}
Grothendieck duality for $f$ states that
\begin{displaymath}
Rf_* R\uHom_{\wt{Y}}(F, f^!G)=R\uHom_Y(Rf_* F, G)
\end{displaymath}
for $F \in D^b(\wt{Y})$ and $G \in D^b(Y)$.  Take $F=\mc{O}_{\wt{Y}}$ and $G=\omega_Y$.  The hypotheses of the lemma
imply that $Rf_* F=\mc{O}_Y$; furthermore, $f^!G=\omega_{\wt{Y}}$.  We thus find $Rf_*(\omega_{\wt{Y}})=\omega_Y$.  The
hypotheses of the lemma imply that $Rf_*(\omega_{\wt{Y}})=f_*(\omega_{\wt{Y}})$.  Thus $\omega_Y$ is concentrated in
a single degree, and so $Y$ is Cohen--Macaulay.
\end{proof}

We now return to the proof of Proposition~\ref{geo-1}.

\begin{proof}[Proof of Proposition~\ref{geo-1}]
The scheme $Z$ is geometrically integral and Cohen--Macaulay, being a vector bundle over such a scheme.  By
hypothesis, $H^i(Z, \mc{O}_Z)=H^i(X, \Sym(\eta))=0$ for $i>0$.  We have
$\omega_Z=\pi^*(\det(\eta) \otimes \omega_X)$; when $X$ is smooth, this can be seen from taking the determinant of
the exact sequence
\begin{displaymath}
0 \to \pi^*(\Omega^1_{X/k}) \to \Omega^1_{Z/k} \to \Omega^1_{Z/X} \to 0
\end{displaymath}
after using the identification $\Omega^1_{Z/X}=\pi^*(\eta)$.  Thus
\begin{displaymath}
H^i(Z, \omega_Z)=H^i(X, \Sym(\eta) \otimes \det(\eta) \otimes \omega_X)
\end{displaymath}
vanishes for $i>0$, by hypothesis.  From Lemmas~\ref{geo-1a} and~\ref{geo-1b}, we see that
$Z \to \Spec(R)$ is proper and birational.  Lemma~\ref{geo-1c} shows that $R$ is Cohen--Macaulay and
$\omega_R=H^0(Z, \omega_Z)$.  Now, $\omega_R$ is a graded $R$-module whose first graded piece is
$H^0(X, \det(\eta) \otimes \omega_X)$.  This space injects into $\omega_R/R_+ \omega_R$.  Thus if $H^0(X,
\det(\eta) \otimes \omega_X)$ has dimension greater than 1 then the fiber of $\omega_R$ at 0 has dimension greater
than 1 and $R$ is not Gorenstein at 0.
\end{proof}

We now turn towards our second result.  Suppose that $\eta$ is generated by its global sections, and write
\begin{displaymath}
0 \to \xi \to \epsilon \to \eta \to 0
\end{displaymath}
with $\epsilon$ a globally free coherent $\mc{O}_X$-module.  Let
$V=\Hom(\epsilon, \mc{O}_X)$, so that we have canonical identifications $\epsilon=V^* \otimes \mc{O}_X$ and
$\Spec(\Sym(\epsilon))=X \times V$.  Put $S=\Sym(V^*)$.  Note that there is a natural map $S \to R$ respecting the
grading.  We regard $k$ as an $S$-module via the identification $S/S_+=k$.

Our second result is the following proposition.  It is the basis of Weyman's ``geometric method'' for studying
syzygies, as exposited in \cite[Ch.~5]{Weyman}.  We include a proof since it is short.

\begin{proposition}
\label{geo-2}
Assume $H^i(X, \Sym(\eta))=0$ for $i>0$.  Then we have an isomorphism of graded vector spaces
\begin{displaymath}
\Tor^n_S(R, k)=\bigoplus_{i \ge n} H^{i-n}(X, \lw{i}{\xi})[i],
\end{displaymath}
where $[ \cdot ]$ indicates the grading.
\end{proposition}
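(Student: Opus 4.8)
The plan is to realize $R$ as the pushforward of the structure sheaf along the projection $q : X \times V \to V$, restricted to the subvariety $Z = \Spec(\Sym(\eta)) \subset X \times V$, and to compute the Koszul homology of $R$ over $S$ by resolving $\mathcal{O}_Z$ on $X \times V$. Concretely, the surjection $\epsilon \to \eta$ dualizes to a closed immersion $Z \hookrightarrow X \times V$ cut out by the vanishing of the composite $\xi^\vee$-valued function on $X \times V$; since $\epsilon$ is free, the Koszul complex $K_\bullet = \Sym(\epsilon) \otimes_{\mathcal{O}_X} \lw{\bullet}{\xi}$ on $X \times V$ is a (locally free, since $Z \to X$ is a vector bundle and hence the Koszul data is regular) resolution of $\pi^* \mathcal{O}_{Z \subset X \times V}$, i.e.\ of $\Sym(\eta)$ as a $\Sym(\epsilon)$-module. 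Pushing forward to $\Spec(S) = V$, we get that $R\Gamma(X, K_\bullet)$ is a complex of free graded $S$-modules computing $R\Gamma(X, \Sym(\eta))$, which by the hypothesis $H^i(X,\Sym(\eta)) = 0$ for $i>0$ is just $R$ sitting in degree $0$.

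The key computation is then a hypercohomology spectral sequence. Filtering the complex $K_\bullet$ of sheaves and taking $R\Gamma$, one gets a spectral sequence with $E_1$-page $E_1^{-n, i} = H^i(X, \lw{n}{\xi} \otimes \Sym(\epsilon))$, converging to $H^{i-n}$ of the total complex, which is $R$ for $i = n$ and $0$ otherwise. Since $\Sym(\epsilon) = \bigoplus_m \Sym^m(V^*) \otimes \mathcal{O}_X$ with $\Sym^m(V^*)$ a trivial bundle, this is $\bigoplus_m \Sym^m(V^*) \otimes H^i(X, \lw{n}{\xi})[m]$, a free $S$-module in the evident grading. Next I would observe that, after one forgets the $S$-module structure, the complex $(E_1^{\bullet, \bullet})$ with its $d_1$-differential — equivalently the complex $R\Gamma(X, K_\bullet)$ — is, up to a free-module shift, precisely a complex of free $S$-modules that becomes the minimal free resolution after tensoring down appropriately: the point is that $R\Gamma(X, K_\bullet)$ is a bounded complex of free $S$-modules whose cohomology is $R$ in degree $0$, so $\Tor^n_S(R,k)$ is the degree-$n$ cohomology of $K_\bullet \otimes_S k$. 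Tensoring $R\Gamma(X, K_\bullet)$ with $k = S/S_+$ kills the entire $\Sym^m$ for $m > 0$ and leaves $\bigoplus_i H^i(X, \lw{\bullet}{\xi})[0]$ — but this loses the grading, so one must instead track the internal grading: the $n$-th free module is $\bigoplus_{i} H^i(X, \lw{i}{\xi}) \otimes S[-i]$ only after the spectral sequence is shown to degenerate, which gives the stated answer with the grading shift $[i]$.

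The main obstacle, and the step requiring care, is showing that the spectral sequence degenerates at $E_2$ (equivalently, that the complex $R\Gamma(X, K_\bullet)$ already \emph{is} the minimal free resolution of $R$ over $S$, or at least computes $\Tor$ without correction terms). This is Weyman's argument: one uses that the differentials in $R\Gamma(X,K_\bullet)$ have entries in the maximal ideal $S_+$ — this follows because the Koszul differential raises the $\Sym(\epsilon)$-degree by one, hence is $S_+$-linear in the relevant sense — so that $K_\bullet \otimes_S k$ has zero differential, and therefore $\Tor^n_S(R,k) = H^n(K_\bullet \otimes_S k)$ reads off the $n$-th free module of the complex directly. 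Matching the internal grading of the free module $\bigoplus_{i} H^i(X, \lw{i}{\xi})$-summand in homological degree $n$ against the shift coming from the spectral sequence's convergence ($i - n$ cohomological degree contributing, internal degree $i$) yields exactly $\bigoplus_{i \ge n} H^{i-n}(X, \lw{i}{\xi})[i]$. I would cite \cite[Ch.~5]{Weyman} for the bookkeeping but include the short argument above since, as remarked, the hypotheses here make it quick.
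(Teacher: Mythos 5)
Your outline is the same as the paper's — Weyman's geometric technique: resolve $\mc{O}_Z$ on $X\times V$ by the Koszul complex of $\xi\hookrightarrow\epsilon$ and exploit that the Koszul differential raises the internal $S$-degree by one — but the step where you try to finish the argument has a gap. You assert that ``the differentials in $R\Gamma(X,K_\bullet)$ have entries in the maximal ideal $S_+\ldots$ so that $K_\bullet\otimes_S k$ has zero differential,'' and attribute this to the Koszul differential raising the $\Sym(\epsilon)$-degree. But if you realize $R\Gamma(X,K_\bullet)$ by a concrete model — say a \v{C}ech--Koszul total complex — the total differential has two pieces: the Koszul part, which indeed lands in $S_+$, and the \v{C}ech (cohomological) part, which is $S$-linear of internal degree $0$ and does \emph{not} die after $\otimes_S k$. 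So $R\Gamma(X,K_\bullet)\otimes_S k$ is \emph{not} a complex with zero differential; it is $\bigoplus_p\check{C}^\bullet(X,\lw{p}{\xi})[p]$ with the surviving \v{C}ech differential, and it is only after taking cohomology of this that the factors $H^{i-n}(X,\lw{i}{\xi})$ appear in the answer. The assertion that there exists a model of $R\Gamma(X,K_\bullet)$ whose differentials \emph{all} lie in $S_+$ — that is, a genuinely minimal free resolution with the predicted free terms — is the nontrivial content of Weyman's theorem (one must fold the higher $d_r$-corrections into a single complex) and does not follow from the degree of the Koszul differential alone.

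The paper's proof avoids this by performing the two steps in the opposite order: it writes $\Tor^\bullet_S(R,k)=Li^*Rp_*\mc{O}_Z=Rp'_*L(i')^*\mc{O}_Z$ via the projection formula, so restriction along the zero section $i'\colon X\hookrightarrow X\times V$ kills the Koszul differential \emph{before} any cohomology on $X$ is taken, leaving $L(i')^*\mc{O}_Z=\bigoplus_i\lw{i}{\xi}[i]$ with zero differentials; then $Rp'_*$ cleanly supplies the $H^{i-n}$ factors. Your spectral-sequence degeneration worry is also not needed: the \v{C}ech--Koszul total complex is already a bounded complex of free graded $S$-modules quasi-isomorphic to $R$, so $\otimes_S k$ computes $\Tor$ outright — the correct fix for your version of the argument is not to force the differential to vanish but to compute the cohomology of the \v{C}ech piece that survives.
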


\begin{proof}
Consider the diagram
\begin{displaymath}
\xymatrix{
X \ar[r]^-{i'} \ar[d]_{p'} & X \times V \ar[d]^p \\
\ast \ar[r]^i & V }
\end{displaymath}
where $\ast=\Spec(k)$ and the horizontal maps are zero sections.  We have isomorphisms (in the derived category of
graded $k$-vector spaces)
\begin{displaymath}
\Tor^{\bullet}_S(R, k)=Li^* Rp_* \mc{O}_Z=Rp'_* L(i')^* \mc{O}_Z.
\end{displaymath}
The first isomorphism comes from the fact that $Rp_* \mc{O}_Z=p_* \mc{O}_Z=R$, while the second
is the base change map, which is an isomorphism in this case (as can be seen by applying the projection formula of
\cite[Prop.~5.6]{Hartshorne}, taking $F=\mc{O}_Z$ and $G$ the structure sheaf of the point).
Now, the Koszul complex gives a resolution of $\Sym(\eta)$ as a $\Sym(\epsilon)$-module:
\begin{displaymath}
\cdots \to \Sym(\epsilon) \otimes \lw{2}{\xi} \to \Sym(\epsilon) \otimes \lw{1}{\xi} \to \Sym(\epsilon) \to
\Sym(\eta) \to 0.
\end{displaymath}
The terms of this resolution are graded $\Sym(\epsilon)$-modules, with $\lw{i}{\xi}$ being in degree $i$.  The
differentials have degree one.  Let $q:X \times V \to X$ be the projection.  We can recast the above
resolution as a quasi-isomorphism of complexes of coherent $\mc{O}_{X \times V}$ modules:
\begin{displaymath}
[ \lw{\bullet}(q^*\xi) ] \to \mc{O}_Z,
\end{displaymath}
As the sheaves in the Koszul complex are locally free $\mc{O}_{X \times V}$-modules, we can calculate $L(i')^*$ by
simply applying $(i')^*$.  After doing so all differentials vanish, since the differentials in the Koszul complex
have degree one.  We thus have a quasi-isomorphism
\begin{displaymath}
[ \lw{\bullet}(\xi) ] \to L(i')^* \mc{O}_Z
\end{displaymath}
where the complex on the left has zero differentials.  Applying $Rp'_*$ yields the formula in the statement of the
proposition.
\end{proof}

\subsection{A flatness criterion}

The main result of this section gives a fiberwise criterion for flatness over a discrete valuation ring.  Before
stating it, let us recall a few definitions.  Let $A$ be a noetherian ring.  We say that $A$ is \emph{catenary} if for
any two
prime ideals $\mf{p} \subset \mf{q}$ of $A$, any two maximal chains of prime ideals beginning at $\mf{p}$ and ending at
$\mf{q}$ have the same length.  This is a mild condition satisfied by most rings one encounters; for instance, every
finitely generated algebra over a complete local noetherian ring is catenary.  The \emph{dimension}
(resp.\ \emph{codimension}) of a prime ideal $\mf{p}$ of $A$ is
the length of the longest chain of primes beginning (resp.\ ending) at $\mf{p}$, or, equivalently, the dimension
of $A/\mf{p}$ (resp.\ $A_{\mf{p}}$).  We say that $A$ is \emph{equidimensional} of dimension $d$ if $\dim(\mf{p})
+\codim(\mf{p})=d$ for all prime ideals $\mf{p}$.  If $A$ is a noetherian catenary local ring then $A$ is
equidimensional if and only if its minimal primes all have the same dimension.  We can now state the proposition:

\begin{proposition}
\label{flat}
Let $A$ be a catenary noetherian local ring and let $\pi$ be an element of the maximal ideal of $A$.  Assume the
following:
\begin{itemize}
\item The ring $A/\pi A$ is reduced.
\item The rings $A[1/\pi]$ and $A/\pi A$ are equidimensional of the same dimension and have the same number of
minimal primes.
\end{itemize}
Then $\pi$ is not a zero-divisor in $A$.
\end{proposition}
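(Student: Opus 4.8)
The plan is to reduce everything to showing that the $\pi$-power-torsion ideal $T=\{a\in A:\pi^n a=0\text{ for some }n\ge 1\}$ is zero. Since $A$ is noetherian, $T=(0:_A\pi^N)$ for $N\gg 0$, and $\pi$ is a zero-divisor in $A$ if and only if $T\neq 0$ (if $\pi^n a=0$ with $a\ne 0$ and $n$ minimal, then $\pi\cdot\pi^{n-1}a=0$ with $\pi^{n-1}a\ne 0$). The argument has two stages: a dimension-counting stage showing $\pi$ lies in no minimal prime of $A$, and a short stage using reducedness of $A/\pi A$ and Nakayama's lemma to deduce $T=0$ from this. First note $A\ne 0$, so $A/\pi A\ne 0$ has a minimal prime, hence so does $A[1/\pi]$; thus $\pi$ is not nilpotent.

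\emph{Stage 1: $\pi$ is in no minimal prime of $A$.} Write $\mathrm{Min}(A)=S_0\sqcup S_1$, where $S_0$ (resp.\ $S_1$) consists of the minimal primes containing (resp.\ not containing) $\pi$; the goal is $S_0=\varnothing$. The minimal primes of $A[1/\pi]$ are exactly $\{\mathfrak q A[1/\pi]:\mathfrak q\in S_1\}$ (a prime minimal among those avoiding $\pi$ is automatically a minimal prime of $A$), so $|S_1|=\#\mathrm{Min}(A[1/\pi])=\#\mathrm{Min}(A/\pi A)=:m$ by hypothesis. Put $d=\dim(A/\pi A)=\dim A[1/\pi]$. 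For $\mathfrak q\in S_1$, the ring $A/\mathfrak q$ is a catenary local domain, so $\mathrm{ht}(\bar{\mathfrak p})+\dim\big((A/\mathfrak q)/\bar{\mathfrak p}\big)=\dim(A/\mathfrak q)$ for every prime $\bar{\mathfrak p}$; since every prime of $A/\mathfrak q$ avoiding $\pi$ is strictly contained in the maximal ideal, one gets $\dim(A/\mathfrak q)[1/\pi]\le\dim(A/\mathfrak q)-1$, and equidimensionality of $A[1/\pi]$ forces $\dim(A/\mathfrak q)[1/\pi]=d$, hence $\dim(A/\mathfrak q)\ge d+1$. Choosing $\mathfrak p$ minimal over $\mathfrak q+\pi A$, Krull's Hauptidealsatz gives $\mathrm{ht}(\mathfrak p/\mathfrak q)=1$, so $\dim(A/\mathfrak p)=\dim(A/\mathfrak q)-1\ge d$; but $V(\mathfrak p)\subseteq\Spec(A/\pi A)$, which is equidimensional of dimension $d$, so $\dim(A/\mathfrak p)=d$ and in fact $\mathfrak p$ is minimal over $\pi A$ (a strictly smaller prime containing $\pi$ would yield a closed subset of $\Spec(A/\pi A)$ of dimension $>d$). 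So each $\mathfrak q\in S_1$ lies beneath a component $V(\mathfrak p)$ of $\Spec(A/\pi A)$ with $\mathfrak p\notin\mathrm{Min}(A)$.

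Conversely, for any $\mathfrak p$ minimal over $\pi A$, reducedness of $A/\pi A$ makes $A_{\mathfrak p}/\pi A_{\mathfrak p}$ a field, so $\mathfrak p A_{\mathfrak p}=\pi A_{\mathfrak p}$ is principal; if moreover $\mathfrak p\notin\mathrm{Min}(A)$, then $\mathrm{ht}\,\mathfrak p=1$ and $A_{\mathfrak p}/\mathrm{nil}(A_{\mathfrak p})$ is a one-dimensional reduced local ring whose maximal ideal is principal, generated by a nonzerodivisor (its associated primes are minimal, hence of height $0$), so it is regular, hence a domain — therefore a unique minimal prime of $A$ lies under $\mathfrak p$, and it avoids $\pi$, i.e.\ lies in $S_1$. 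Thus the assignment sending a component $V(\mathfrak p)$ of $\Spec(A/\pi A)$ with $\mathfrak p\notin\mathrm{Min}(A)$ to that minimal prime is a well-defined, surjective map onto $S_1$, so there are at least $m$ such components; since the $V(\mathfrak q)$ for $\mathfrak q\in S_0$ are also components of $\Spec(A/\pi A)$, disjoint from these, and the total number of components is $m$, we conclude $|S_0|=0$.

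\emph{Stage 2: $T=0$.} For any prime $\mathfrak p$ with $\pi\notin\mathfrak p$ one has $T_{\mathfrak p}=0$ ($\pi$ is a unit in $A_{\mathfrak p}$ and $T$ is $\pi$-torsion), so $T\subseteq\ker(A\to A_{\mathfrak p})\subseteq\mathfrak p$; since by Stage 1 no minimal prime of $A$ contains $\pi$, it follows that $T$ lies in every minimal prime, i.e.\ $T\subseteq\mathrm{nil}(A)$. A direct check gives $T\cap\pi A=\pi T$ (if $t=\pi a\in T$ then $\pi^{N+1}a=\pi^N t=0$, so $a\in T$), so $T/\pi T$ injects into $A/\pi A$ with image $(T+\pi A)/\pi A$; but the elements of $T$ are nilpotent and $A/\pi A$ is reduced, so this image is $0$, whence $T\subseteq\pi A$. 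The same computation then gives $T=\pi T$, and Nakayama's lemma applied to the finitely generated $A$-module $T$ and the ideal $\pi A\subseteq\mathrm{rad}(A)$ yields $T=0$. Hence $\pi$ is not a zero-divisor.

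The main obstacle is the counting argument in Stage 1: one must show simultaneously that every minimal prime in $S_1$ accounts for at least one component of $\Spec(A/\pi A)$ and that every component whose defining prime is not minimal in $A$ is "claimed" by a unique minimal prime, so that the equality of the two component counts forces $S_0=\varnothing$. The delicate point is that a component of $\Spec(A/\pi A)$ could a priori sit above two distinct minimal primes of $A$ (e.g.\ over $k[[x,y]]/(xy)$ with $\pi=x+y$); ruling this out is exactly where reducedness of $A/\pi A$ enters, via the fact that a regular local ring is a domain, and it is also where the equidimensionality hypotheses are genuinely used.
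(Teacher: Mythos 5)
Your proof is correct, and it takes a genuinely different route from the paper's, even though both rest on the same underlying strategy (a component-counting inequality, then Nakayama). The paper introduces the auxiliary ring $B=A/I$ (the torsion-free quotient), where it is trivial that no minimal prime contains $\pi$, and then spends its effort proving that $B/\pi B$ is equidimensional of the right dimension with at least $n$ minimal primes, so that the surjection $A/\pi A\to B/\pi B$ between equidimensional Noetherian rings of the same dimension with reduced source must be an isomorphism; the kernel is $I/\pi I$ and Nakayama finishes. You instead work entirely inside $A$: you show directly that no minimal prime of $A$ can contain $\pi$, by pairing each minimal prime in $S_1$ with a component of $\Spec(A/\pi A)$ cut out by a non-minimal prime of $A$ (via catenary equidimensionality to locate a height-one prime minimal over $\pi A$, and via reducedness of $A/\pi A$ to make the localization there a DVR, hence with a unique minimal prime beneath it). The equality of component counts then forces $S_0=\varnothing$. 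Your Stage~2 is also different from the paper's finish: rather than comparing $A/\pi A$ with $B/\pi B$, you observe $T\subseteq\mathrm{nil}(A)$ (because $T$ localizes to zero away from $\pi$ and, by Stage~1, that includes every minimal prime), then $T\subseteq\pi A$ by reducedness of $A/\pi A$, then $T=\pi T$ and Nakayama. What your approach buys is the elimination of the auxiliary ring $B$ and a cleaner view of where each hypothesis enters (catenary and equidimensionality for the counting, reducedness of $A/\pi A$ for the DVR fact and the Nakayama step). What the paper's approach buys is that the analogue of your Stage~1 becomes trivial in $B$ — the cost is a more elaborate comparison between $A/\pi A$ and $B/\pi B$ at the end. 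The local DVR ingredient (a Noetherian local ring whose maximal ideal is principal and generated by a nonzerodivisor has a unique minimal prime) appears in both proofs, applied to $A_{\mathfrak p}/\mathrm{nil}$ in yours and to $B_{\mathfrak q}$ in the paper's Lemmas~\ref{flat-7} and~\ref{flat-8}.
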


\begin{remark}
Let $\mc{O}$ be a discrete valuation ring with uniformizer $\pi$ and let $A$ be a catenary noetherian local
$\mc{O}$-algebra.  The above proposition gives a criterion for $A$ to be flat over $\mc{O}$ in terms of conditions on
the fibers of the map $\Spec(A) \to \Spec(\mc{O})$.
\end{remark}

\begin{remark}
The proof of the proposition will yield the following additional piece of information:  extension gives a bijection
between the minimal primes of $A$ and those of $A[1/\pi]$, and similarly for $A/\pi A$ in place of $A[1/\pi]$.
\end{remark}

We now prove the proposition.  If $\pi$ is nilpotent then the proposition is trivial, so assume this is not
the case.  Let $d$ be the common dimension of $A[1/\pi]$ and $A/\pi A$, and let $n$ be the common number of minimal
primes in $A[1/\pi]$ and $A/\pi A$.  Let $I$ be the ideal of $\pi$-power torsion in $A$.  We must show that $I=0$.  Put
$B=A/I$.  Note that $B$ is still catenary, noetherian and local, and $\pi$ is not a zero-divisor in $B$.

\begin{lemma}
\label{flat-1}
No minimal prime of $B$ contains $\pi$.
\end{lemma}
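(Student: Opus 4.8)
The plan is to read this off immediately from the structural fact recorded just before the statement --- that $\pi$ is a non-zero-divisor in $B$ --- together with the standard description of the zero-divisors of a noetherian ring. Recall that $B=A/I$ with $I$ the ideal of $\pi$-power torsion, and that $\pi$ is a non-zero-divisor in $B$ for the trivial reason that if $\pi b\in I$ then $\pi^{N+1}b=0$ for some $N$, whence $b\in I$.

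First I would invoke the fact that in a noetherian ring the set of zero-divisors is exactly the union of the associated primes. Since $\pi$ is a non-zero-divisor in $B$, it lies in no associated prime of $B$. As every minimal prime of $B$ is an associated prime of $B$, it follows that no minimal prime of $B$ contains $\pi$, which is the assertion of the lemma.

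If one prefers to avoid associated primes, there is an equivalent localization argument: suppose $\mf{p}$ is a minimal prime of $B$ with $\pi\in\mf{p}$. Then $B_{\mf{p}}$ is a nonzero artinian local ring, and the image of $\pi$ lies in its maximal ideal, hence is nilpotent, hence a zero-divisor in $B_{\mf{p}}$. But $B_{\mf{p}}$ is flat over $B$, so multiplication by $\pi$ on $B_{\mf{p}}$ is the localization of the injective map ``multiplication by $\pi$'' on $B$, and is therefore itself injective --- a contradiction.

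There is essentially no obstacle here; the step is genuinely short. The only point worth flagging is that the equidimensionality hypotheses and the hypotheses on the number of minimal primes in Proposition~\ref{flat} play no role at this stage --- the lemma depends only on the construction of $B$ as the quotient of $A$ by its $\pi$-power torsion. (Those hypotheses will presumably be needed later, to compare this correspondence of minimal primes with the minimal primes of $A[1/\pi]$ and $A/\pi A$.)
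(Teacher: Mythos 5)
Your proof is correct, but it takes a genuinely different route from the paper's. You invoke the structure theorem for zero-divisors in a noetherian ring (zero-divisors are the union of the associated primes) together with the fact that minimal primes are associated, or equivalently the localization argument via the artinian local ring $B_{\mf{p}}$. The paper instead gives a hands-on argument that avoids associated primes entirely: assuming a minimal prime $\mf{p}_1$ contains $\pi$, it notes $n>1$ (else $\pi$ would be nilpotent), picks $x_i \in \mf{p}_i\setminus\mf{p}_1$ for $i\ge 2$, and sets $x=\prod x_i$; then $\pi x$ lies in every minimal prime hence is nilpotent, so $\pi^k x^k=0$ for some $k$, forcing $x^k=0$ since $\pi$ is a non-zero-divisor in $B$ --- contradicting $x\notin\mf{p}_1$ (so $x$ is not nilpotent). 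Your version is shorter and more conceptual if one is happy to quote primary decomposition facts; the paper's is more elementary and self-contained, using only that the nilradical is the intersection of the minimal primes. Your closing remark is also accurate: only the construction of $B$ (and that $\pi$ is not nilpotent) is used here, and the equidimensionality and minimal-prime-counting hypotheses of Proposition~\ref{flat} only enter in the later lemmas.
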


\begin{proof}
Let $\mf{p}_1, \ldots, \mf{p}_n$ be the minimal primes of $B$, and suppose $\pi$ belongs to $\mf{p}_1$.  We
cannot have $n=1$, as $\pi$ is not nilpotent.  Let $x_i$, for $2 \le i \le n$, be an element of $\mf{p}_i$ which
does not belong to $\mf{p}_1$, and let $x$ be the product of the $x_i$.  Thus $x$ belongs to $\mf{p}_2 \cap \cdots
\cap \mf{p}_n$, but is not nilpotent.  As $\pi x$ is nilpotent, we find that $\pi^k$ kills $x^k$ for some $k$, which
contradicts the fact that $\pi$ is not a zero-divisor of $B$. 
\end{proof}

\begin{lemma}
\label{flat-2}
Any prime of $B$ minimal over $(\pi)$ has codimension 1.
\end{lemma}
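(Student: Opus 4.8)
The plan is to combine Krull's principal ideal theorem with Lemma~\ref{flat-1}. First I would recall that $B$ is noetherian, so Krull's Hauptidealsatz applies: every prime $\mf{p}$ of $B$ that is minimal over the principal ideal $(\pi)$ satisfies $\codim(\mf{p}) \le 1$. This gives one of the two needed inequalities essentially for free.

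For the reverse inequality, the key observation is that such a $\mf{p}$ necessarily contains $\pi$, hence cannot be a minimal prime of $B$: Lemma~\ref{flat-1} says no minimal prime of $B$ contains $\pi$ (this is where the fact that $\pi$ is a non-zero-divisor in $B$, built into the construction $B = A/I$, gets used). Since $\mf{p}$ is not minimal, there is a prime strictly contained in it, so $\codim(\mf{p}) \ge 1$. Putting the two bounds together yields $\codim(\mf{p}) = 1$ exactly.

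I do not expect any real obstacle here; the statement is a formal consequence of the Hauptidealsatz together with the already-established Lemma~\ref{flat-1}, and the only points worth double-checking are that the noetherian hypothesis (needed for Krull) is in force and that $(\pi)$ is a proper ideal of $B$ — which holds since $\pi$ lies in the maximal ideal of the local ring $B$ and $\pi$ is not nilpotent, so $(\pi) \neq B$. Note that the catenary hypothesis on $B$ is not actually needed for this particular lemma; it will be used later when reassembling the dimension count.
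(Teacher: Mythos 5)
Your proof is correct and matches the paper's own argument exactly: Krull's principal ideal theorem gives $\codim(\mf{p}) \le 1$, and Lemma~\ref{flat-1} rules out codimension~$0$. The extra remarks (verifying $(\pi)$ is proper, noting catenarity is not needed here) are sound but not strictly necessary.
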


\begin{proof}
Krull's principal ideal theorem says the codimension is at most 1, while Lemma~\ref{flat-1} says it cannot be 0.
\end{proof}

\begin{lemma}
\label{flat-3}
Let $R$ be a noetherian local domain and let $\pi$ be a non-zero element of the maximal ideal of $R$ such that
$R[1/\pi]$ is a field.  Then $R$ has exactly two prime ideals:  its maximal ideal and the zero ideal.
\end{lemma}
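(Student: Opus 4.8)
\emph{Proof proposal.} The first step is to pin down $R[1/\pi]$. Since $R$ is a domain and $R[1/\pi]$ is a field containing $R$, it contains $x^{-1}$ for every nonzero $x\in R$; thus $\pi^n x^{-1}\in R$ for some $n$, i.e.\ $x$ divides a power of $\pi$. In particular $R[1/\pi]=\Frac(R)$, and consequently every nonzero prime $\mf q$ of $R$ contains $\pi$: choosing $0\neq x\in\mf q$ and $n$ with $\pi^n\in(x)\subseteq\mf q$ does it. So the content of the lemma is really the assertion $\dim R=1$: once that is known, any prime $\mf q$ with $(0)\neq\mf q$ satisfies $(0)\subsetneq\mf q\subseteq\mf m$, and if $\mf q\neq\mf m$ this is a chain of length $2$, contradicting $\dim R=1$; hence $\mf q=\mf m$, while $(0)\neq\mf m$ because $\pi\in\mf m\setminus\{0\}$, giving exactly two primes.

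To prove $\dim R=1$, I would examine the minimal primes $\mf p_1,\dots,\mf p_r$ of the (proper, since $\pi\in\mf m$) principal ideal $(\pi)$. By Krull's principal ideal theorem each $\mf p_i$ has height at most $1$, and since $\pi\in\mf p_i$ with $\pi\neq 0$, each has height exactly $1$. The key claim is that $\mf m\in\{\mf p_1,\dots,\mf p_r\}$, which immediately gives $\on{ht}(\mf m)=1$, i.e.\ $\dim R=1$. Suppose not; then $\mf p_i\subsetneq\mf m$ for all $i$, so by prime avoidance there is an element $x\in\mf m$ lying in none of the $\mf p_i$, and necessarily $x\neq 0$. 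A minimal prime $\mf q$ over $(x)$ has height $1$ (Krull, together with $x\neq 0$), and being nonzero it contains $\pi$, hence contains some $\mf p_i$; two primes of height $1$ with one contained in the other must coincide, so $\mf q=\mf p_i$, contradicting $x\in\mf q\setminus\mf p_i$.

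Putting these together: $\mf m=\mf p_i$ for some $i$, hence $\dim R=1$, hence $\Spec R=\{(0),\mf m\}$ as explained in the first paragraph. I do not expect a genuine obstacle here — the argument is a short exercise in dimension theory — and the only point needing a little care is the prime-avoidance step, where one must check that the chosen element $x$ is nonzero (which follows since $R$ is a domain, so $0$ lies in every $\mf p_i$). The ingredients used (Krull's principal ideal theorem and prime avoidance) are entirely standard and require nothing from earlier in the paper.
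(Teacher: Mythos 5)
Your proof is correct and uses essentially the same ingredients as the paper's (the observation that every nonzero prime contains $\pi$, Krull's principal ideal theorem, and prime avoidance applied to the finitely many minimal primes over $(\pi)$); the only difference is that you phrase the final step as a proof by contradiction, whereas the paper argues directly that every element of $\mf m$ lies in a height-one prime, so $\mf m\subset\bigcup\mf p_i$ and prime avoidance gives $\mf m=\mf p_i$.
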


\begin{proof}
Since $R[1/\pi]$ is a field, given any non-zero $x \in R$, we can find a non-zero $y \in R$ such that $xy=\pi^n$ for
some $n$.  It follows that every non-zero principal ideal, and therefore every non-zero ideal, contains a power of
$\pi$.  Thus $\pi$ belongs to every non-zero prime ideal.  We thus see that the codimension one primes of $R$ are in
bijection with the minimal primes of $R/\pi R$, and are thus finite in number.

Let $\mf{p}_1, \ldots, \mf{p}_n$ be the codimension one primes of $R$.  Every element of the maximal ideal $\mf{m}$ of
$R$ belongs to one of these primes, by Krull's principal ideal theorem.  Thus $\mf{m} \subset \bigcup \mf{p}_i$.
However, this implies $\mf{m} \subset \mf{p}_i$ for some $i$, and so there is only one $\mf{p}_i$ and it is maximal.
This completes the proof.
\end{proof}

\begin{lemma}
\label{flat-4}
The ring $B$ is equidimensional of dimension $d+1$ and contains $n$ minimal primes.
\end{lemma}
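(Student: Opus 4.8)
The plan is to relate the ring $B=A/I$ to the two fibers we already understand. First I would observe that since $\pi$ is not a zero-divisor in $B$ (this is how $B$ was constructed), Lemma~\ref{flat-1} applies and no minimal prime of $B$ contains $\pi$; hence the minimal primes of $B$ are in bijection with the minimal primes of $B[1/\pi]$. But $B[1/\pi]=A[1/\pi]$, since inverting $\pi$ already kills the $\pi$-power torsion ideal $I$. So $B$ has exactly $n$ minimal primes, and each has dimension equal to the dimension of the corresponding minimal prime of $A[1/\pi]$, plus $1$: indeed, if $\mf{q}$ is a minimal prime of $B$, then $B/\mf{q}$ is a domain in which $\pi$ is nonzero and $(B/\mf{q})[1/\pi]$ is a domain of dimension $d$ (a minimal component of $A[1/\pi]$), so by a dimension count $\dim(B/\mf{q})=d+1$. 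This shows $B$ is equidimensional of dimension $d+1$ with $n$ minimal primes.

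The one genuine subtlety is the claim that $\dim(B/\mf{q})=\dim((B/\mf{q})[1/\pi])+1$ for a domain $B/\mf{q}$ in which $\pi\ne 0$. Here I would argue as follows: $B/\mf{q}$ is a catenary noetherian local domain, so its dimension equals the length of a maximal chain of primes from $0$ to the maximal ideal. Such a chain passes through some prime $\mf{p}$ minimal over $(\pi)$; by Lemma~\ref{flat-2} (applied to $B/\mf{q}$, using Lemma~\ref{flat-1}), $\mf{p}$ has codimension $1$. The primes of $B/\mf{q}$ not containing $\pi$ are exactly the primes of $(B/\mf{q})[1/\pi]$, and by catenarity the longest chain among them has length $\dim((B/\mf{q})[1/\pi])$; appending the step from such a chain's top to $\mf{p}$ and then (by catenarity again, since $\dim B/\mf{q}=\codim\mf{p}+\dim\mf{p}$ by equidimensionality, which we are trying to establish) $\ldots$ — to avoid circularity, it is cleaner to instead invoke that in a catenary local domain $R$ with $\pi$ a nonzero, non-unit element, every minimal prime of $R[1/\pi]$ lifts to a minimal prime of $R$, every maximal ideal of $R[1/\pi]$ has its preimage of codimension $1$ in $R$ by Krull, and $\dim R = \sup_{\mf{p}} (\dim R/\mf{p} + 1)$ over primes $\mf{p}$ minimal over $(\pi)$, while $\dim R/\mf{p}=\dim R-1$ follows from catenarity together with the fact that the chain below $\mf{p}$ has length $1$. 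Since $R/\mf{p}$ is then a localization-type quotient whose spectrum matches a neighborhood in $\Spec R[1/\pi]$, one gets $\dim R - 1 \le \dim R[1/\pi]$; the reverse inequality is immediate since $R[1/\pi]$ is a localization of $R$ with $\pi$ not a unit, so $\dim R[1/\pi] \le \dim R - 1$ by Krull's Hauptidealsatz applied to the prime $\mf{p}$. Combining gives the equality.

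Thus the heart of the argument, and the step I expect to be the main obstacle to write cleanly, is this dimension bookkeeping: transferring the equidimensionality of $A[1/\pi]$ (dimension $d$, $n$ minimal primes) across the localization $B \rightsquigarrow B[1/\pi]$ to get equidimensionality of $B$ (dimension $d+1$, $n$ minimal primes). Everything else — the bijection on minimal primes, the identification $B[1/\pi]=A[1/\pi]$, and the fact that $\pi$ is a non-zero-divisor in $B$ — is formal or already in hand. Once Lemma~\ref{flat-4} is proved, the conclusion of Proposition~\ref{flat} will follow by comparing $B/\pi B$ (which has dimension $d$ since $\pi$ is a non-zero-divisor on $B$) with $A/\pi A$ (which is reduced, equidimensional of dimension $d$ with $n$ minimal primes by hypothesis), forcing the surjection $A/\pi A \to B/\pi B$ to be an isomorphism and hence $I \subseteq \pi I$, so $I=0$ by Nakayama.
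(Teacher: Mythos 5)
Your overall plan is the right one and matches the paper's: use the bijection between minimal primes of $B$ and of $B[1/\pi]=A[1/\pi]$ to get the count $n$, then show that contracting and topping off a maximal chain from $B[1/\pi]$ raises the dimension by exactly $1$. You correctly identify the dimension bookkeeping as the real content. However, the justification you give for that step has a genuine gap.

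The problematic claim is that the preimage $\mf{p}$ in $R=B/\mf{q}_{\min}$ of a maximal ideal $\mf{q}$ of $R[1/\pi]$ satisfies $\dim(R/\mf{p})=1$, which you attribute to ``Krull.'' Krull's principal ideal theorem bounds the \emph{height} of a prime minimal over $(\pi)$; it says nothing about why there can be no prime strictly between $\mf{p}$ and the maximal ideal of $R$. That last step --- that $R/\mf{p}$ is a local domain with $(R/\mf{p})[1/\pi]$ a field, and that such a ring must be one-dimensional --- is precisely Lemma~\ref{flat-3} of the paper, whose proof is a prime-avoidance argument, not Krull. Without invoking this (or reproving it), your chain $\mf{p}_0\subset\cdots\subset\mf{p}_d$ obtained by contraction need not be saturated at the top, so you cannot conclude that appending the maximal ideal yields a \emph{maximal} chain of length $d+1$. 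Your alternate route through ``$\dim R=\sup_{\mf{p}}(\dim R/\mf{p}+1)$'' and ``$R/\mf{p}$ is a localization-type quotient whose spectrum matches a neighborhood of $\Spec R[1/\pi]$'' is too vague to substitute for this: $\mf{p}$ contains $\pi$, so $\Spec(R/\mf{p})$ is disjoint from $\Spec(R[1/\pi])$ and does not obviously bound its dimension from below. Once you insert Lemma~\ref{flat-3} at the place where you currently write ``by Krull,'' your argument closes up and becomes essentially the paper's proof: contract a maximal chain from $B[1/\pi]$, use Lemma~\ref{flat-3} to see the contracted chain is one step short of the maximal ideal with nothing in between, and then use catenarity (plus the observation from the paper that a catenary local ring is equidimensional iff its minimal primes all have the same dimension) to conclude.
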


\begin{proof}
The primes of $B$ not containing $\pi$ correspond to the primes of $B[1/\pi]=A[1/\pi]$ via extension and contraction.
Since no minimal prime of $B$ contains $\pi$, it follows that the minimal primes of $B$ and $B[1/\pi]$ are in bijection,
and so $B$ has $n$ minimal primes.  Let $\mf{p}_0$ be a minimal prime of $B$, and let $\mf{q}_0$ be its extension to
$B[1/\pi]$.  Let $\mf{q}_0 \subset \cdots \subset \mf{q}_d$ be a maximal chain of prime ideals in $B[1/\pi]$ and
let $\mf{p}_i$ be the contraction of $\mf{q}_i$.  Then $B/\mf{p}_d$ is a local domain in which $\pi$ is non-zero
element of the maximal ideal, and $(B/\mf{p}_d)[1/\pi]=B[1/\pi]/\mf{q}_d$ is a field.  It follows from
Lemma~\ref{flat-3} that $B/\mf{p}_d$ is one dimensional, and so there is no prime between $\mf{p}_d$ and the maximal
ideal $\mf{p}_{d+1}$ of $B$.  Thus
$\mf{p}_0 \subset \cdots \subset \mf{p}_{d+1}$ is a maximal chain of length $d+1$.  As $B$ is catenary, every
maximal chain between $\mf{p}_0$ and $\mf{p}_{d+1}$ has length $d+1$, and so $B$ is equidimensional of dimension $d+1$.
\end{proof}

\begin{lemma}
\label{flat-5}
The ring $B/\pi B$ is equidimensional of dimension $d$.
\end{lemma}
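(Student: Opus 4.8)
The plan is to derive this immediately from the equidimensionality of $B$ obtained in Lemma~\ref{flat-4} together with the codimension computation of Lemma~\ref{flat-2}, using the catenary hypothesis. First I would observe that $B/\pi B$ is again a noetherian catenary local ring, being a quotient of $B$. By the characterization recalled just before Proposition~\ref{flat} (a noetherian catenary local ring is equidimensional iff all its minimal primes have the same dimension), it therefore suffices to show that every minimal prime of $B/\pi B$ has dimension exactly $d$.

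The minimal primes of $B/\pi B$ are precisely the images of the primes of $B$ that are minimal over $(\pi)$. So fix such a prime $\mf{p}$ of $B$. By Lemma~\ref{flat-2} we have $\codim(\mf{p})=1$. Since $B$ is equidimensional of dimension $d+1$ by Lemma~\ref{flat-4}, this gives $\dim(\mf{p})=(d+1)-\codim(\mf{p})=d$, i.e.\ $\dim(B/\mf{p})=d$. As $\dim(B/\mf{p})$ is exactly the dimension of the corresponding minimal prime of $B/\pi B$, we conclude that every minimal prime of $B/\pi B$ has dimension $d$, and hence $B/\pi B$ is equidimensional of dimension $d$.

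I do not expect any real obstacle here, since all the substance has been isolated into Lemmas~\ref{flat-1}, \ref{flat-2} and~\ref{flat-4}; what remains is just to combine the codimension bound with equidimensionality. The only point requiring care is the reduction of equidimensionality of $B/\pi B$ to a statement about its minimal primes alone, which is precisely where the catenary hypothesis on $A$ — inherited by $B$ and then by $B/\pi B$ — is used.
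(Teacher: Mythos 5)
Your proof is correct and follows essentially the same route as the paper: pass to a minimal prime of $B/\pi B$, identify its preimage in $B$ as a prime minimal over $(\pi)$ of codimension $1$ via Lemma~\ref{flat-2}, use the equidimensionality of $B$ from Lemma~\ref{flat-4} to conclude it has dimension $d$, and invoke the catenary/noetherian/local hypothesis to upgrade equality of dimensions of minimal primes to equidimensionality.
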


\begin{proof}
Let $\mf{p}$ be a minimal prime of $B/\pi B$ and let $\wt{\mf{p}}$ be its inverse image in $B$.  Then $\wt{\mf{p}}$
has codimension 1 by Lemma~\ref{flat-2}, and since $B$ is equidimensional, dimension $d$.  Of
course, $\wt{\mf{p}}$ and $\mf{p}$ have the same dimension.  This shows that all minimal primes of $B/\pi B$ have
dimension $d$, and as $B/\pi B$ is catenary, noetherian and local, the proposition follows.
\end{proof}

\begin{lemma}
\label{flat-6}
Let $\mf{p}$ be a minimal prime of $B$.  Then there exists a prime $\mf{q}$ of $B$ minimal over $(\pi)$ which
contains $\mf{p}$.
\end{lemma}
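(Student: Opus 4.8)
The plan is to produce the prime $\mf{q}$ by working in the domain $B/\mf{p}$, where Krull's principal ideal theorem applies without fuss, and then to use the catenary and equidimensionality properties of $B$ (Lemma~\ref{flat-4}) to verify that the prime so obtained is genuinely minimal over $(\pi)$ in $B$ itself.

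First I would record that $B/\mf{p}$ is a noetherian local domain. Since $\mf{p}$ is a minimal prime of $B$, which is equidimensional of dimension $d+1$ by Lemma~\ref{flat-4}, we have $\codim(\mf{p})=0$ and hence $\dim(B/\mf{p})=d+1$; as $B$ is catenary, every saturated chain of primes from $\mf{p}$ up to the maximal ideal of $B$ has length exactly $d+1$. By Lemma~\ref{flat-1}, $\pi\notin\mf{p}$, so the image of $\pi$ in $B/\mf{p}$ is a nonzero element of its maximal ideal; choose a prime $\ol{\mf{q}}$ of $B/\mf{p}$ minimal over the ideal generated by this image. Krull's principal ideal theorem gives $\codim(\ol{\mf{q}})=1$ in $B/\mf{p}$. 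Writing $\mf{q}$ for the preimage of $\ol{\mf{q}}$ in $B$, we then have $\pi\in\mf{q}$, $\mf{p}\subsetneq\mf{q}$, and no prime of $B$ strictly between $\mf{p}$ and $\mf{q}$.

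Next I would check that $\codim_B(\mf{q})=1$. Picking a saturated chain from $\mf{q}$ to the maximal ideal of $B$ — it has length $\dim(B/\mf{q})$ since $B/\mf{q}$ is a catenary local domain — and prepending the link $\mf{p}\subsetneq\mf{q}$ yields a saturated chain from $\mf{p}$ to the maximal ideal of length $1+\dim(B/\mf{q})$. By the previous paragraph this length is $d+1$, so $\dim(B/\mf{q})=d$, and equidimensionality of $B$ gives $\codim_B(\mf{q})=(d+1)-d=1$. Finally, $\mf{q}$ is minimal over $(\pi)$: any prime of $B$ minimal over $(\pi)$ and contained in $\mf{q}$ has codimension $1$ by Lemma~\ref{flat-2}, hence equals $\mf{q}$. (Equivalently: a prime $\mf{q}'$ with $(\pi)\subseteq\mf{q}'\subsetneq\mf{q}$ contains $\pi$, so is not minimal by Lemma~\ref{flat-1}, whence a minimal prime $\mf{p}'\subsetneq\mf{q}'\subsetneq\mf{q}$ forces $\codim_B(\mf{q})\ge 2$, a contradiction.) Thus $\mf{q}$ is a prime of $B$ minimal over $(\pi)$ and containing $\mf{p}$.

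The one place calling for any care is the chain bookkeeping in the third paragraph, where both hypotheses on $B$ — catenary and equidimensional — are genuinely used; everything else is immediate from the earlier lemmas of this section.
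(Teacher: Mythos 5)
Your proof is correct and follows essentially the same route as the paper: both pass to the domain $B/\mf{p}$, observe $\pi$ is nonzero there, produce a prime $\mf{q}\supseteq(\pi)+\mf{p}$ of codimension one in $B$, and then invoke Lemma~\ref{flat-2} to conclude minimality over $(\pi)$. The one small difference is bookkeeping: the paper picks $\mf{q}$ to be the preimage of a minimal prime of $B/((\pi)+\mf{p})$ of maximal dimension $d$, so that equidimensionality alone gives $\codim_B(\mf{q})=1$ without any chain argument; you instead start from Krull's principal ideal theorem in $B/\mf{p}$ and then use catenary to translate that codimension back to $B$, which works for an arbitrary minimal prime of $B/((\pi)+\mf{p})$ but uses slightly more machinery.
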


\begin{proof}
The ring $B/\mf{p}$ is a local domain of dimension $d+1$ in which $\pi$ is non-zero element of the maximal ideal, and
so $B/((\pi)+\mf{p})$ has dimension $d$.  Let $\mf{q}$ be the inverse image in $B$ of a minimal prime of
$B/((\pi)+\mf{p})$ of dimension $d$.  Then $\mf{q}$ has dimension $d$, and since $B$ is equidimensional, codimension 1.
It follows from Lemma~\ref{flat-2} that $\mf{q}$ is minimal over $(\pi)$.
\end{proof}

\begin{lemma}
\label{flat-7}
The ring $B/\pi B$ is regular in codimension 0, that is, if $\mf{q}$ is a minimal prime then $(B/\pi B)_{\mf{q}}$ is
a field.
\end{lemma}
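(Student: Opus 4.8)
The plan is to exploit the fact that $A/\pi A$ is reduced by hypothesis, together with the surjection $A/\pi A \to B/\pi B$ coming from the presentation $B=A/I$. First I would fix a minimal prime $\mf{q}$ of $B/\pi B$ and let $\mf{q}'$ denote its preimage in $A/\pi A$. Then $\mf{q}'$ is a prime of $A/\pi A$ with $(A/\pi A)/\mf{q}' \cong (B/\pi B)/\mf{q}$, and $(B/\pi B)_{\mf{q}}$ is a quotient of the local ring $(A/\pi A)_{\mf{q}'}$.

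The crux is to check that $\mf{q}'$ is in fact a minimal prime of $A/\pi A$. By Lemma~\ref{flat-5} the ring $B/\pi B$ is equidimensional of dimension $d$, so $\dim(\mf{q}') = \dim (B/\pi B)/\mf{q} = d$; since $A/\pi A$ is equidimensional of dimension $d$ by hypothesis, this forces $\codim(\mf{q}')=0$, i.e.\ $\mf{q}'$ is minimal. This is the one place where both equidimensionality assumptions are used, and it is essentially the only content of the argument.

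To finish, note that since $A/\pi A$ is reduced, its localization $(A/\pi A)_{\mf{q}'}$ at the minimal prime $\mf{q}'$ is a reduced noetherian local ring of Krull dimension $0$, hence a field. Consequently $(B/\pi B)_{\mf{q}}$, being a nonzero quotient of this field, must equal it, and in particular is a field. As $\mf{q}$ was an arbitrary minimal prime, $B/\pi B$ is regular in codimension $0$. I do not anticipate any genuine obstacle here: the lemma is a formal consequence of Lemma~\ref{flat-5}, the reducedness of $A/\pi A$, and the equidimensionality hypotheses.
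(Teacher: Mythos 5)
Your proof is correct and follows essentially the same route as the paper: pull back the minimal prime to $A/\pi A$, use the equidimensionality of both special fibers (plus Lemma~\ref{flat-5}) to see the pullback is minimal, then use reducedness of $A/\pi A$ to conclude that its localization is a field and hence so is the quotient $(B/\pi B)_{\mf{q}}$. The only difference is that you spell out the dimension count justifying minimality of the preimage, which the paper leaves as a one-line assertion.
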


\begin{proof}
Let $\mf{q}$ be a minimal prime of $B/\pi B$ and let $\mf{p}$ be its inverse image in $A/\pi A$.  Since $A/\pi A$ and
$B/\pi B$ are equidimensional of the same dimension, $\mf{p}$ is a minimal prime of $A/\pi A$.  One readily verifies
that the natural map $(A/\pi A)_{\mf{p}} \to (B/\pi B)_{\mf{q}}$ is a surjection of rings.  As $A/\pi A$
is reduced, the former is a field, and so the latter is as well.
\end{proof}

\begin{lemma}
\label{flat-8}
The ring $B/\pi B$ contains at least $n$ minimal primes.
\end{lemma}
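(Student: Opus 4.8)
The plan is to produce, for each minimal prime of $B$, a prime of $B$ minimal over $(\pi)$ that singles it out, and then to check that distinct minimal primes of $B$ yield distinct such primes. Since pulling back along $B \to B/\pi B$ identifies the minimal primes of $B/\pi B$ with the primes of $B$ minimal over $(\pi)$, it suffices to exhibit at least $n$ of the latter. Lemma~\ref{flat-4} tells us that $B$ has exactly $n$ minimal primes $\mf{p}_1,\dots,\mf{p}_n$, and Lemma~\ref{flat-6} supplies, for each $i$, a prime $\mf{q}_i$ of $B$ minimal over $(\pi)$ with $\mf{p}_i \subset \mf{q}_i$. So the entire content of the lemma reduces to the assertion that the $\mf{q}_i$ are pairwise distinct.

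For that it is enough to show that every prime $\mf{q}$ of $B$ minimal over $(\pi)$ contains a \emph{unique} minimal prime of $B$: then $\mf{q}_i = \mf{q}_j$ would force $\mf{p}_i = \mf{p}_j$, hence $i = j$. I would establish this uniqueness by passing to the localization $R = B_{\mf{q}}$. By Lemma~\ref{flat-2}, $R$ is a one-dimensional noetherian local ring; because $B$ is $\pi$-torsion free so is $R$, so $\pi$ is a non-zero-divisor in $R$, and $\pi \ne 0$ since $\dim R = 1$; and by Lemma~\ref{flat-7} the quotient $R/\pi R = (B/\pi B)_{\mf{q}}$ is a field, so the maximal ideal of $R$ is precisely $\pi R$. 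Now a noetherian local ring whose maximal ideal is generated by a single non-zero-divisor is a domain: using Krull's intersection theorem ($\bigcap_k \mf{m}^k = \bigcap_k \pi^k R = 0$) one writes each nonzero element of $R$ uniquely as $\pi^k$ times a unit, and then the product of two nonzero elements is visibly nonzero because $\pi$ is a non-zero-divisor. Hence $R$ is a domain, so it has a unique minimal prime, which is exactly to say that $\mf{q}$ contains a unique minimal prime of $B$.

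I expect the only step requiring genuine thought to be this last one — the observation that a noetherian local ring with principal maximal ideal generated by a non-zero-divisor is a domain (essentially, that it is a discrete valuation ring). Everything else is a routine assembly of the preceding lemmas (\ref{flat-2}, \ref{flat-4}, \ref{flat-6}, \ref{flat-7}) together with the standing fact that $B$ is $\pi$-torsion free.
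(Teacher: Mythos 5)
Your proposal is correct and follows essentially the same route as the paper: reduce to showing each prime $\mf{q}$ minimal over $(\pi)$ contains a unique minimal prime of $B$, localize at $\mf{q}$, use Lemma~\ref{flat-7} to see the maximal ideal of $B_{\mf{q}}$ is $\pi B_{\mf{q}}$, and conclude (via Krull's intersection theorem) that the localization has a unique minimal prime, then combine with Lemma~\ref{flat-6}. The only cosmetic difference is that the paper shows directly that any prime of $B_{\mf{q}}$ other than the maximal ideal is zero, whereas you establish the slightly stronger statement that $B_{\mf{q}}$ is a domain by writing nonzero elements as $\pi^k$ times a unit (using that $\pi$ is a non-zero-divisor, which the paper does not in fact need at this step).
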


\begin{proof}
Let $\mf{q}$ be a minimal prime over $(\pi)$ in $B$.  We claim that $\mf{q}$ contains exactly one minimal prime of $B$.
Since $(B/\pi B)_{\mf{q}}=B_{\mf{q}}/\pi B_{\mf{q}}$ is a field (Lemma~\ref{flat-7}), we find that $\pi B_{\mf{q}}$ is
the maximal ideal of $B_{\mf{q}}$.  Since the maximal ideal of $B_{\mf{q}}$ is principal, any other prime ideal of
$B_{\mf{q}}$ is the zero ideal, which establishes the claim.  Since every minimal prime of $B$ is contained in at least
one prime minimal over $(\pi)$ (Lemma~\ref{flat-6}), it follows that there are at least $n$ primes of $B$ minimal over
$(\pi)$.
\end{proof}

\begin{lemma}
\label{flat-9}
The map $A/\pi A \to B/\pi B$ is an isomorphism.
\end{lemma}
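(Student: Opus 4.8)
The plan is to show that the kernel $J$ of the natural surjection $A/\pi A \twoheadrightarrow B/\pi B$ is zero; since $A/\pi A$ is reduced, it is enough to prove that $J$ is contained in every minimal prime of $A/\pi A$, i.e.\ that every minimal prime of $A/\pi A$ remains prime (hence minimal) in $B/\pi B$. The argument is purely a count of minimal primes, using the lemmas already established.

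First I would note that $\Spec(B/\pi B)=V(J)$ is a closed subscheme of $\Spec(A/\pi A)$, so distinct primes of $B/\pi B$ stay distinct when viewed in $A/\pi A$. Next, exactly as in the proof of Lemma~\ref{flat-7}, the fact that $A/\pi A$ (by hypothesis) and $B/\pi B$ (by Lemma~\ref{flat-5}) are equidimensional of the same dimension $d$ forces the inverse image in $A/\pi A$ of any minimal prime of $B/\pi B$ to be a minimal prime of $A/\pi A$: were it to strictly contain some minimal prime $\mf{p}'$ of $A/\pi A$, concatenating chains would give $\dim((A/\pi A)/\mf{p}')>d=\dim(A/\pi A)$, which is absurd.

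Combining the previous two points, the minimal primes of $B/\pi B$ inject into the set of minimal primes of $A/\pi A$. By Lemma~\ref{flat-8} the source has at least $n$ elements, and by hypothesis the target has exactly $n$, so the injection is a bijection: the minimal primes of $B/\pi B$ are precisely those of $A/\pi A$. Hence $J$, being contained in every prime of $B/\pi B$, lies in every minimal prime of $A/\pi A$, hence in the nilradical of $A/\pi A$, which vanishes by reducedness. Thus $J=0$ and $A/\pi A\to B/\pi B$ is an isomorphism. I do not expect any real difficulty here; the only things to be careful about are citing the correct source for each equidimensionality statement and recognizing that Lemma~\ref{flat-8} (``at least $n$'') is genuinely what upgrades the injection to a bijection, since a priori $V(J)$ might contain only some of the minimal primes of $A/\pi A$.
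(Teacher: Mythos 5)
Your proof is correct and it is essentially the paper's argument, just written out in full: the paper's one-sentence proof ("a map of equidimensional noetherian rings of the same dimension such that the source is reduced and the target has at least the number of minimal primes as the source") is exactly the general fact you prove by comparing minimal primes on each side and using reducedness of $A/\pi A$ to kill the kernel.
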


\begin{proof}
It is a map of equidimensional noetherian rings of the same dimension such that the source is reduced and the target
has at least the number of minimal primes as the source.
\end{proof}

\begin{lemma}
\label{flat-10}
We have $I=0$.
\end{lemma}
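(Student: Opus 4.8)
The plan is to extract $I=0$ from Lemma~\ref{flat-9} together with the defining property of $I$; all of the substantive work has already been done in the preceding lemmas, so what remains is essentially formal.

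First I would unwind the isomorphism of Lemma~\ref{flat-9}. Since $B=A/I$, we have $\pi B=(\pi A+I)/I$ and hence $B/\pi B=A/(\pi A+I)$, and under this identification the map $A/\pi A\to B/\pi B$ of Lemma~\ref{flat-9} is exactly the canonical surjection $A/\pi A\twoheadrightarrow A/(\pi A+I)$. Lemma~\ref{flat-9} asserts that this surjection is an isomorphism, so in particular it is injective; therefore $\pi A+I=\pi A$, i.e.\ $I\subseteq\pi A$.

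Next I would bootstrap this to $I=\pi I$. The inclusion $\pi I\subseteq I$ is clear. Conversely, take $x\in I$ and write $x=\pi a$ with $a\in A$, which is possible by the previous paragraph. By the definition of $I$ there is an integer $k$ with $\pi^k x=0$, and then $\pi^{k+1}a=\pi^k x=0$, so $a$ is itself annihilated by a power of $\pi$, whence $a\in I$. Thus $x=\pi a\in\pi I$, proving $I\subseteq\pi I$.

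Finally, $A$ is noetherian, so $I$ is finitely generated, and $\pi$ lies in the maximal ideal $\mf{m}$ of the local ring $A$; since $I=\pi I\subseteq\mf{m}I$, Nakayama's lemma gives $I=0$. I do not anticipate any real obstacle in this last lemma: the content is all carried by Lemmas~\ref{flat-1}--\ref{flat-9}, and this step merely records their upshot and applies Nakayama.
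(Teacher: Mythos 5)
Your proof is correct and takes essentially the same route as the paper: the paper identifies the kernel of $A/\pi A \to B/\pi B$ directly as $I/\pi I$ (using that $B$ has no $\pi$-torsion) and then invokes Nakayama, whereas you first extract $I\subseteq \pi A$ from injectivity and then upgrade to $I=\pi I$ using the $\pi$-power-torsion definition of $I$. These are the same computation presented in a slightly different order, and both end by Nakayama.
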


\begin{proof}
Since $B$ has no $\pi$-torsion, the kernel of the map $A/\pi A \to B/\pi B$ is $I/\pi I$.  Thus $I/\pi I=0$, and so
$I=0$ by Nakayama's lemma.
\end{proof}

\subsection{Two more results from commutative algebra}

Let $A$ be a ring and let $\pi$ be an element of $A$.  In this section we give a pair of results which allow us to
transfer properties of $A/\pi A$ and $A[1/\pi]$ to $A$.

\begin{proposition}
\label{reduced}
Suppose that $\pi$ is not a zero-divisor, $A$ is $(\pi)$-adically separated and $A/\pi A$ is reduced.  Then $A$ is
reduced.
\end{proposition}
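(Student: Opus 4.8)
The plan is to show that the nilradical of $A$ is zero by proving that every nilpotent element of $A$ is infinitely divisible by $\pi$, and then invoking the $(\pi)$-adic separatedness hypothesis, which says that $\bigcap_{m \ge 0} \pi^m A = 0$.

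So fix $x \in A$ with $x^N = 0$ for some $N \ge 1$. I would prove by induction on $m \ge 0$ that there exists an element $y_m \in A$ with $y_m^N = 0$ and $x = \pi^m y_m$. For $m = 0$ one takes $y_0 = x$. For the inductive step, suppose $x = \pi^m y_m$ with $y_m^N = 0$. Reducing modulo $\pi$, the image of $y_m$ in $A/\pi A$ is nilpotent, hence zero since $A/\pi A$ is reduced; thus $y_m = \pi z$ for some $z \in A$, and $x = \pi^{m+1} z$. To complete the induction I must check $z^N = 0$: this follows from $\pi^N z^N = y_m^N = 0$ together with the fact that $\pi$, and hence $\pi^N$, is not a zero-divisor. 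So we may take $y_{m+1} = z$.

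Once this is established, $x = \pi^m y_m \in \pi^m A$ for every $m \ge 0$, so $x \in \bigcap_{m \ge 0} \pi^m A$, which is zero by separatedness. Hence $x = 0$, and since this holds for every nilpotent $x$, the ring $A$ is reduced.

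The argument is elementary and I do not anticipate any real obstacle; the three hypotheses are each used exactly once, in clearly identifiable places — reducedness of $A/\pi A$ to kill $y_m$ modulo $\pi$, the non-zero-divisor property to cancel the power of $\pi$ in the inductive step, and $(\pi)$-adic separatedness only at the very end (note that no noetherian hypothesis or completeness is needed, just separatedness). The one point worth being slightly careful about is that nilpotency exponents stay controlled: carrying the fixed exponent $N$ through the induction, rather than letting it grow with $m$, keeps the bookkeeping clean, though it is not logically essential.
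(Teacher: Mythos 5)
Your argument is correct and is essentially identical to the paper's proof: both iteratively peel off factors of $\pi$ from a nilpotent element using reducedness of $A/\pi A$ and the non-zero-divisor property, then conclude by $(\pi)$-adic separatedness. You have merely made the induction explicit and tracked the nilpotency exponent carefully, which is a harmless refinement of the same idea.
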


\begin{proof}
Suppose $x_1$ is an element of $A$ such that $x_1^n=0$.  Then $x_1=0$ in $A/\pi A$, and so we can write $x_1=\pi x_2$.
We thus find $\pi^n x_2^n=0$, and so, since $\pi$ is not a zero-divisor, $x_2^n=0$.  Reasoning as before, we can
write $x_2=\pi x_3$.  Continuing in this manner, we find that $x_1$ belongs to $(\pi^n)$ for all $n$, and is thus equal
to 0.  This completes the proof.
\end{proof}

\begin{proposition}
\label{normal}
Suppose that $A$ is a domain, $A[1/\pi]$ is normal and $A/\pi A$ is reduced.  Then $A$ is normal.
\end{proposition}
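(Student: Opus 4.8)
The plan is to show directly that $A$ is integrally closed in its fraction field $K=\Frac(A)$; we may assume $\pi\neq 0$, since otherwise $A[1/\pi]$ is the zero ring and there is nothing to prove. Let $B$ denote the integral closure of $A$ in $K$, so that the goal is to prove $B=A$. The first step is to localize: since formation of integral closure commutes with localization, $B[1/\pi]$ is the integral closure of $A[1/\pi]$ in $K$, which equals $A[1/\pi]$ by the normality hypothesis. Consequently the $A$-module $B/A$ is $\pi$-power torsion: every $x\in B$ satisfies $\pi^n x\in A$ for some $n\ge 0$.

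Next I would argue by contradiction. Suppose $x\in B\setminus A$, and let $n\ge 1$ be minimal with $\pi^n x\in A$. Set $y=\pi^{n-1}x$; then $y\in B\setminus A$ by minimality of $n$, while $z:=\pi y=\pi^n x$ lies in $A$. Choose a monic equation $y^m+a_{m-1}y^{m-1}+\dots+a_0=0$ with $a_i\in A$ witnessing that $y$ is integral over $A$. Multiplying through by $\pi^m$ and rewriting everything in terms of $z=\pi y$ gives
\[
z^m=-\bigl(a_{m-1}\pi\, z^{m-1}+a_{m-2}\pi^2 z^{m-2}+\dots+a_0\pi^m\bigr),
\]
so $z^m\in\pi A$, i.e.\ the image of $z$ in $A/\pi A$ is nilpotent. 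Since $A/\pi A$ is reduced, $z\in\pi A$, say $z=\pi a$ with $a\in A$. As $A$ is a domain and $\pi\neq 0$, we may cancel $\pi$ to get $y=a\in A$, contradicting $y\notin A$. Hence $B=A$ and $A$ is normal.

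The argument is short and uses no noetherian hypothesis; the only place where anything beyond formal manipulation is needed is the passage from $z^m\in\pi A$ to $z\in\pi A$, which is exactly where reducedness of $A/\pi A$ enters, together with the cancellation of $\pi$, which uses that $A$ is a domain. I do not expect a genuine obstacle. The one routine point to be careful about is that integral closure commutes with the localization $A\to A[1/\pi]$ in this (not necessarily noetherian) generality, which is standard.
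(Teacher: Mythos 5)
Your proof is correct and takes essentially the same approach as the paper: write the would-be integral element with a minimal power of $\pi$ in the denominator, plug into a monic integral equation, multiply through to clear denominators, reduce mod $\pi$ to see a nilpotent, and use reducedness of $A/\pi A$ to strip off a factor of $\pi$ and contradict minimality. The paper's version is marginally more streamlined (it works directly with $x=y/\pi^n$ and the monic equation in $x$, deducing $y\in(\pi)$ and hence a smaller denominator, rather than introducing the auxiliary elements $y$ and $z$ and cancelling $\pi$), but the underlying idea is identical.
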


\begin{proof}
Let $x$ be an element of the fraction field of $A$ which is integral over $A$; we must show that $x$ belongs to $A$.
Since $A[1/\pi]$ is normal, $x$ belongs to $A[1/\pi]$, and so we can write $x=y/\pi^n$ with $y \in A$ and $n \ge 0$
minimal.  Assume $n>0$.  Let $\sum_{i=0}^d a_i x^i$ be a monic equation over $A$ satisfied
by $x$.  We then have $\sum_{i=0}^d a_i y^i (\pi^n)^{d-i}=0$.  We find $y^d=0$ in $A/\pi A$, which shows that $y$
belongs to $(\pi)$, contradicting the minimality of $n$.  Thus $n=0$ and $x$ belongs to $A$.
\end{proof}

\subsection{A result from Galois theory}

Let $F/\Q_p$ be a finite extension of degree $d$ and let $F'/F$ be the maximal $p$-power Galois extension in which the
inertia group is abelian and killed by $p$.  Let $G=\Gal(F'/F)$ and let $U$ be the inertia subgroup of $G$.  We thus
have a short exact sequence
\begin{displaymath}
1 \to U \to G \to \Z_p \to 0,
\end{displaymath}
where $\Z_p$ has for a topological generator the arithmetic Frobenius element $\phi$.  We give $U$ the structure of
an $\F_p \lbb T \rbb$-module by letting $T$ act by $\phi-1$.  The following result determines the structure of $U$:

\begin{proposition}
\label{galstruct}
If $F$ contains the $p$th roots of unity then 
$U \cong \F_p \oplus \F_p \lbb T \rbb^{\oplus d}$; otherwise $U \cong \F_p \lbb T \rbb^{\oplus d}$
\end{proposition}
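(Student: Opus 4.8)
The plan is to reinterpret $U$ as a Galois cohomology group and then pin down its $\F_p\lbb T\rbb$-module structure by computing $\dim_{\F_p}(U/T^{p^n}U)$ for every $n\ge 0$. First I would set up the basic dictionary. Let $L$ be the maximal unramified pro-$p$ extension of $F$. Any finite unramified $p$-extension of $F$ vacuously satisfies the conditions defining $F'$, so $L\subseteq F'$; since $L$ is then the maximal unramified subextension of $F'/F$, it equals the fixed field $(F')^U$, and $\Gal(L/F)$ is the quotient $\Z_p=\langle\phi\rangle$ of the statement, acting on $U$ by conjugation. I claim $F'$ is the maximal elementary abelian $p$-extension $M$ of $L$. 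Indeed $\Gal(F'/L)=U$ is abelian and killed by $p$ by construction, so $F'\subseteq M$; conversely $M/F$ is Galois and pro-$p$, its maximal unramified subextension is $L$, so the inertia subgroup of $\Gal(M/F)$ equals $\Gal(M/L)$, which is abelian and killed by $p$; hence $M$ satisfies the conditions defining $F'$ and $M\subseteq F'$. Therefore $U=\Gal(M/L)$ is the maximal elementary abelian $p$-quotient of $G_L:=\Gal(\ol F/L)$, and, with its conjugation $\Gal(L/F)$-action, it is the Pontryagin dual of the discrete $\F_p[\Gal(L/F)]$-module $N:=H^1(G_L,\F_p)=\Hom(G_L,\F_p)$. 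In particular $U$ is a compact $\F_p\lbb T\rbb$-module.

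Next I would compute $\dim_{\F_p}(U/T^{p^n}U)$. Because we work in characteristic $p$, the identity $T^{p^n}=(\phi-1)^{p^n}=\phi^{p^n}-1$ holds in $\F_p\lbb T\rbb$, so $U/T^{p^n}U$ is the group of coinvariants of $U$ under the closed subgroup of $\Gal(L/F)$ topologically generated by $\phi^{p^n}$, namely $\Gal(L/L_n)$, where $L_n/F$ is the unramified extension of degree $p^n$. Dualizing,
\begin{displaymath}
\dim_{\F_p}(U/T^{p^n}U)=\dim_{\F_p} N^{\Gal(L/L_n)}=\dim_{\F_p} H^1(G_L,\F_p)^{\Gal(L/L_n)}.
\end{displaymath}
The inflation--restriction sequence for $G_L\trianglelefteq\Gal(\ol F/L_n)$ with quotient $\Gal(L/L_n)\cong\Z_p$ is
\begin{displaymath}
0\to H^1(\Gal(L/L_n),\F_p)\to H^1(\Gal(\ol F/L_n),\F_p)\to H^1(G_L,\F_p)^{\Gal(L/L_n)}\to H^2(\Gal(L/L_n),\F_p),
\end{displaymath}
and since $\Gal(L/L_n)\cong\Z_p$ has cohomological dimension $1$, the last term vanishes and the first is one-dimensional; thus $\dim_{\F_p}(U/T^{p^n}U)=\dim_{\F_p} H^1(\Gal(\ol F/L_n),\F_p)-1$. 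Now $L_n$ is a finite extension of $\Q_p$ of degree $dp^n$, and $\mu_p\subseteq L_n$ if and only if $\mu_p\subseteq F$ (because $[L_n:F]$ is a power of $p$ while $[F(\mu_p):F]$ divides $p-1$). Hence the local Euler characteristic formula, together with the fact that $H^2(\Gal(\ol F/L_n),\F_p)$ is dual to $\mu_p(L_n)$ by local Tate duality, gives $\dim_{\F_p}H^1(\Gal(\ol F/L_n),\F_p)=dp^n+1+\delta$, where $\delta=1$ if $\mu_p\subseteq F$ and $\delta=0$ otherwise. Altogether $\dim_{\F_p}(U/T^{p^n}U)=dp^n+\delta$ for every $n\ge 0$.

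To finish, I would invoke commutative algebra. The case $n=0$ shows $U/TU$ is finite-dimensional over $\F_p$, so by the topological Nakayama lemma $U$ is finitely generated over the complete discrete valuation ring $\F_p\lbb T\rbb$; write $U\cong\F_p\lbb T\rbb^{\oplus a}\oplus\bigoplus_{i=1}^{b}\F_p\lbb T\rbb/(T^{e_i})$ with $e_i\ge 1$. For this module $\dim_{\F_p}(U/T^{p^n}U)=ap^n+\sum_{i=1}^{b}\min(e_i,p^n)$. Comparing with $dp^n+\delta$: taking $n=0$ gives $a+b=d+\delta$, while letting $n\to\infty$ gives $a=d$ and $\sum_i e_i=\delta$. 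If $\delta=0$ this forces $b=0$, so $U\cong\F_p\lbb T\rbb^{\oplus d}$; if $\delta=1$ it forces $b=1$ and $e_1=1$, so $U\cong\F_p\oplus\F_p\lbb T\rbb^{\oplus d}$, as asserted.

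The step I expect to be the main obstacle is the identification in the first paragraph: one must verify that requiring ``abelian and killed by $p$'' only of the inertia subgroup (and not of all of $\Gal(F'/F)$) does not cut out something strictly smaller than the maximal elementary abelian $p$-extension of $L$, and one must be content that $L$ is an infinite extension of $\Q_p$ --- which is harmless, since $H^1(G_L,\F_p)=\varinjlim_n H^1(\Gal(\ol F/L_n),\F_p)$ and all the facts used (inflation--restriction, the cohomological dimension of $\Z_p$, the local Euler characteristic formula) live over the finite-level fields $L_n$. Beyond that, the one slightly unusual ingredient is the characteristic-$p$ identity $T^{p^n}=\phi^{p^n}-1$, which is what makes the numbers $\dim_{\F_p}(U/T^{p^n}U)$ accessible; everything else is routine local Galois cohomology and the structure theory of modules over $\F_p\lbb T\rbb$.
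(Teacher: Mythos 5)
Your proof is correct, and it reaches the same intermediate formula as the paper --- $\dim_{\F_p}(U/T^{p^n}U)=dp^n+\delta$ --- but by a genuinely different route. The paper works directly at the finite levels: it introduces the field $F_n'$, identifies $\Gal(F_n'/F_n)$ with $U/T^{p^n}U$ by a normal-closure argument inside the semidirect product, and then applies local class field theory to write $\Gal(F_n'/F_n)\cong (F_n^\times)^\wedge/\langle x_n\rangle\otimes\F_p\cong U_{F_n}\otimes\F_p$, from which the dimension is read off from the structure of the unit group. You instead pass to the infinite unramified tower $L$, identify $F'$ with the maximal elementary abelian $p$-extension of $L$, dualize $U$ to $H^1(G_L,\F_p)$, use the characteristic-$p$ identity $T^{p^n}=\phi^{p^n}-1$ to turn the quotient $U/T^{p^n}U$ into an invariant subspace, and then run inflation--restriction together with the local Euler characteristic formula and Tate duality. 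Your route buys a cleaner derivation of the identification $\Gal(F_n'/F_n)\cong U/T^{p^n}U$ (avoiding the normal-closure bookkeeping) and a more systematic dimension count; the paper's route is more elementary and concrete in that it computes directly with unit groups rather than appealing to the Euler characteristic. The two are of course closely linked --- the Euler characteristic formula is itself essentially a repackaging of class field theory and the structure of $\mc{O}_{F_n}^\times$ --- but the presentations differ enough to count as distinct proofs. The final structure-theoretic step over $\F_p\lbb T\rbb$ is the same in both. The one place to be slightly careful in your write-up is the step $\dim(U/T^{p^n}U)=\dim N^{\Gal(L/L_n)}$: strictly one computes $\dim(U/\overline{T^{p^n}U})$ by duality, and the equality $\overline{T^{p^n}U}=T^{p^n}U$ follows once you know $U$ is finitely generated, which you do obtain from the $n=0$ case via topological Nakayama --- so no harm is done, but the order of deductions should make this explicit.
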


\begin{proof}
Fix an element $\wt{\phi}$ in $G$ lifting $\phi$.
Let $F_n$ be the unramified extension of $F$ of degree $p^n$, and let $F_n'$ be the maximal abelian extension of $F_n$
of exponent $p$ on which $\wt{\phi}^{p^n}$ acts trivially.  Then $F_n'$ is the fixed field of the normal closure of
$\langle \wt{\phi} \rangle \subset G$ acting on $F'$, and so $\Gal(F_n'/F_n)$ is identified with $U/T^{p^n} U$.  By
class field theory,
the abelianized Galois group of $F_n$ is identified with $(F_n^{\times})^{\wedge}$, with the element $\wt{\phi}^{p^n}$
corresponding to some element $x_n$ of valuation 1.  It follows that we have isomorphism
\begin{displaymath}
\Gal(F_n'/F_n)=(F_n^{\times})^{\wedge}/\langle x_n \rangle \otimes \F_p=U_{F_n} \otimes \F_p,
\end{displaymath}
where $U_{F_n}$ is the unit group of $F_n$.  We thus find that $U/T^{p^n} U$ has dimension $\epsilon+p^n d$ over
$\F_p$, where $\epsilon$ is 1 or 0 according to whether $F$ contains the $p$th roots of unity or not.  Now, since $U$
is the
inverse limit of the $U/T^{p^n} U$ and $U/TU$ is finite dimensional, it follows that $U$ is finitely generated over
$\F_p \lbb T \rbb$.  Appealing to the structure theory of finitely generated $\F_p \lbb T \rbb$-modules and our
formula for the dimension of $U/T^{p^n} U$ gives the stated result.
\end{proof}

\begin{remark}
Suppose that $F$ contains the $p$th roots of unity.  By the above result, $U$ contains a unique $\F_p$-line on which
$\phi$ acts trivially.  Let us now describe this line more explicitly.  As $F_n$ contains the $p$th roots of unity,
there is some element $y_n$ of $F_n^{\times} \otimes \F_p$ such that $F_{n+1}=F_n(y_n^{1/p})$.  This element is
unique up to scaling by elements of $\F_p^{\times}$.  Alternatively, fixing a $p$th root of unity and letting
$(,)$ be the $\F_p$-valued Hilbert symbol on $F_n^{\times} \otimes \F_p$, we can characterize $y_n$ uniquely by
$(y_n, x)=\val(x)$.  The element $y_n$ is a unit, invariant under $\phi$ and satisfies $N(y_{n+1})=y_n$, where
$N:F_{n+1}^{\times} \to F_n^{\times}$ is the norm map.  The sequence
$(y_n)$ defines an element of the inverse limit of the $U_{F_n} \otimes \F_p$ (where the transition maps are the
norm maps).  By the above proof, this inverse limit is $U$.  Since $(y_n)$ is non-zero and $\phi$-invariant, it spans
the unique $\phi$-invariant $\F_p$-line in $U$.
\end{remark}

\section{Moduli spaces of matrices}
\label{s:modmat}

In this section we study three moduli problems related to $2 \times 2$ nilpotent matrices.  Certain local rings of
these spaces will later be identified as the special fiber of certain Galois deformation rings.  Throughout, $k$ is a
fixed field of characteristic not 2.

\subsection{Borel and nilpotent matrix algebras}

Let $\mf{g}=M_2$ be the space of $2 \times 2$ matrices over $k$ and let $\mf{g}^{\circ}$ be the subspace of traceless
matrices.  Since 2 is invertible, the space $\mf{g}$ is the direct sum of $\mf{g}^{\circ}$ and the space of scalar
matrices.  We write $\ul{\mf{g}}$ and $\ul{\mf{g}}^{\circ}$ for the sheaves $\mf{g} \otimes \mc{O}$ and $\mf{g}^{\circ}
\otimes \mc{O}$ on $\P^1$.

Let $T$ be a $k$-scheme.  A \emph{nilpotent subalgebra} of $\mf{g}_T$ is a line subbundle $\mf{u}$ of $\mf{g}_T$
such that $\ker(\mf{u})=\im(\mf{u})$ is a line subbundle of $\mc{O}_T^2$.  One easily sees that sending $\mf{u}$ to
$\ker(\mf{u})$ defines a bijection between nilpotent subalgebras of $\mf{g}_T$ and line subbundles of $\mc{O}_T^2$.
One recovers $\mf{u}$ from a line bundle $\mc{L}$ via the formula $\mf{u}=\uHom(\mc{O}_T^2/\mf{L}, \mc{L})$.  Since
the bundle $\mc{O}(-1)$ on $\P^1$ is the universal line subbundle of $\mc{O}^2$, it follows that the nilpotent
subalgebra $\ul{\mf{u}}=\uHom(\mc{O}^2/\mc{O}(-1), \mc{O}(-1)) \cong \mc{O}(-2)$ of $\ul{\mf{g}}$ on $\P^1$ is the
universal nilpotent subalgebra of $\mf{g}$.

A \emph{Borel subalgebra} of $\mf{g}_T$ is a rank three subbundle $\mf{b}$ of $\mf{g}_T$ for which there
exists a line subbundle $\mc{L}$ of $\mc{O}_T^2$ such that $\mf{b} \mc{L} \subset \mc{L}$.  Again, one finds that
the correspondence between $\mf{b}$ and $\mc{L}$ is bijective, and so there is a universal Borel subalgebra
$\ul{\mf{b}}$ of $\ul{\mf{g}}$ on $\P^1$.  One can make similar definitions in the traceless case, and obtain a
universal
Borel subalgebra $\ul{\mf{b}}^{\circ}$ of $\ul{\mf{g}}^{\circ}$ on $\P^1$.  We have $\ul{\mf{b}}=\ul{\mf{b}}^{\circ}
\oplus \mc{O}$ since 2 is invertible.  As every section of $\ul{\mf{b}}^{\circ}$ induces an endomorphism of
$\mc{O}^2/\mc{O}(-1)$, we get a canonical map $\ul{\mf{b}}^{\circ} \to \mc{O}$, the kernel of which is $\ul{\mf{u}}$.
That is, we have an exact sequence
\begin{displaymath}
0 \to \ul{\mf{u}} \to \ul{\mf{b}}^{\circ} \to \mc{O} \to 0
\end{displaymath}
of sheaves on $\P^1$.  A global section of $\ul{\mf{b}}^{\circ}$ is determined by the endomorphism of $\mc{O}^2$
it induces, and this endomorphism has image in $\mc{O}(-1)$.  Since there are no non-zero maps $\mc{O}^2 \to
\mc{O}(-1)$, we conclude that $\Gamma(\P^1, \ul{\mf{b}}^{\circ})=0$.  This, combined with the above exact sequence,
implies that $\ul{\mf{b}}^{\circ} \cong \mc{O}(-1)^{\oplus 2}$.

\subsection{Strongly nilpotent matrices}

Let $T$ be a $k$-algebra.  We say that a matrix $m$ in $M_2(T)$ is \emph{strongly nilpotent} if its trace and
determinant are both 0.  This implies $m^2=0$, but the converse does not hold in general (it does if $T$ is a domain).
If $\mf{u}$ is a nilpotent subalgebra of $(M_2)_T$ then every element of $\mf{u}$ is strongly nilpotent.

\subsection{The space $\mc{A}$}
\label{aspace}

Let $r \ge 1$ be an integer.  Let $\mc{A}=\mc{A}_r$ be the functor which assigns to a $k$-algebra $T$ the set
$\mc{A}(T)$ of tuples $(m_1, \ldots, m_r)$ where each $m_i \in M_2(T)$ is a strongly nilpotent matrix such that
$m_i m_j=0$ for all $i$ and $j$.  We let $a \in \mc{A}(k)$ denote the tuple $(0, \ldots, 0)$.  The main result of
this section is the following theorem:

\begin{theorem}
\label{thm-a}
The functor $\mc{A}$ is (represented by) a geometrically integral normal Cohen--Macaulay affine scheme of dimension
$r+1$.  For $r>1$ the local ring at $a$ is not Gorenstein.
\end{theorem}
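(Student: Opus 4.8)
The plan is to realize $\mc{A}_r$ as (an affine chart of, or closely related to) a ring of the form $R = \Gamma(\P^1, \Sym(\eta))$ studied in \S\ref{s:alg}, and then apply Propositions~\ref{geo-1} and~\ref{geo-2} with $X = \P^1$. First I would set up the incidence variety: given a tuple $(m_1, \ldots, m_r) \in \mc{A}_r(T)$ with not all $m_i = 0$ and $T$ a domain (or more carefully, working with the functor directly), the common kernel of the $m_i$ is a line; this suggests resolving $\mc{A}_r$ by the variety $Z$ parametrizing a line $\mc{L} \subset \mc{O}^2_T$ together with $r$ sections of the nilpotent subalgebra $\ul{\mf{u}} \cong \mc{O}(-2)$ attached to $\mc{L}$. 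Concretely, $Z$ is the total space of the vector bundle $\eta^\vee$ on $\P^1$ where $\eta = \ul{\mf{u}}^{\oplus r} \cong \mc{O}(-2)^{\oplus r}$; equivalently $R = \Gamma(\P^1, \Sym(\mc{O}(2)^{\oplus r}))$. I would then check that the natural map $Z \to \mc{A}_r$ (sending $(\mc{L}, (u_i))$ to the tuple $(u_i)$ viewed in $M_2$) realizes $\Spec(R)$ as $\mc{A}_r$, using that $m_i m_j = 0$ for strongly nilpotent $m_i$ forces a common kernel line, at least generically, and that the conditions $\tr(m_i) = \det(m_i) = 0$, $m_i m_j = 0$ cut out exactly the image; that $\mc{A}_r$ is reduced and irreducible should follow once we know $R$ is a domain. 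The dimension count is $\dim \P^1 + \mathrm{rk}(\eta) = 1 + r$, matching the claim.

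Next I would verify the hypotheses of Proposition~\ref{geo-1} for $\eta = \mc{O}(-2)^{\oplus r}$ on $X = \P^1$. Here $\P^1$ is smooth, hence normal and Cohen--Macaulay; $\omega_{\P^1} = \mc{O}(-2)$; $\det(\eta) = \mc{O}(-2r)$; and the dual bundle $\eta^\vee = \mc{O}(2)^{\oplus r}$ is ample and globally generated, so $\eta$ is ample and globally generated in the sense defined after Proposition~\ref{geo-1}. The vanishing $H^1(\P^1, \Sym(\eta)) = 0$ holds because $\Sym(\eta)$ is a sum of $\mc{O}(-2n)$'s for $n \ge 0$, and $H^1(\P^1, \mc{O}(-2n)) = 0$ precisely when $-2n \ge -1$, i.e.\ $n = 0$, where it vanishes; wait --- $H^1(\P^1, \mc{O}(d)) = 0$ iff $d \ge -1$, so I need to be careful: $H^1(\P^1, \mc{O}(-2n)) \ne 0$ for $n \ge 1$. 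This is the point where the naive choice $\eta = \mc{O}(-2)^{\oplus r}$ fails, and I suspect the resolution is that $\mc{A}_r$ is actually built from $\eta$ whose \emph{dual} is globally generated but whose symmetric powers compute cohomology the other way --- that is, one should take $Z = $ total space of $\eta^\vee$ so that $R = \Gamma(\P^1, \Sym(\eta))$ with $\eta$ \emph{positive}, and re-examine which bundle genuinely appears. So the first real step is to pin down $\eta$ correctly: I expect $\eta \cong \mc{O}(2)^{\oplus r}$ up to twist (so $R = \Gamma(\P^1, \Sym(\mc{O}(2)^{\oplus r}))$), where $\Sym(\eta) = \bigoplus_n \mc{O}(2n)^{\oplus \binom{n+r-1}{r-1}}$ and $H^1$ vanishes in every degree, and $\Sym(\eta) \otimes \det(\eta) \otimes \omega_{\P^1} = \bigoplus_n \mc{O}(2n + 2r - 2)^{\oplus(\cdots)}$ which has vanishing $H^1$ as well since $2n + 2r - 2 \ge 2r - 2 \ge 0$ for $r \ge 1$. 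Granting the correct identification, Proposition~\ref{geo-1} then gives immediately that $R$ is Cohen--Macaulay, that $Z \setminus Z_0 \to \Spec(R) \setminus \{0\}$ is an isomorphism, and --- since $\dim H^0(\P^1, \det(\eta) \otimes \omega_{\P^1}) = \dim H^0(\P^1, \mc{O}(2r - 2)) = 2r - 1 > 1$ for $r > 1$ --- that the local ring at $0 = a$ is not Gorenstein. Normality follows from Serre's criterion $(R_1) + (S_2)$: $(S_2)$ is Cohen--Macaulayness, and $(R_1)$ holds because $\Spec(R) \setminus \{0\}$ is isomorphic to the smooth variety $Z \setminus Z_0$ (smooth since it is an open subset of a vector bundle over $\P^1$), so $R$ is regular away from a point of codimension $r + 1 \ge 2$. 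Geometric integrality: $R$ is a domain because $Z$ is integral and $R \hookrightarrow \Gamma(Z, \mc{O}_Z)$ is an inclusion into the function field's subring; geometric integrality follows from $H^0(\P^1, \mc{O}) = k$ being geometrically reduced and the construction commuting with base field extension.

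The main obstacle, and the step requiring genuine care, is the identification of the functor $\mc{A}_r$ with $\Spec(R)$ --- in particular showing the map $Z \to \mc{A}_r$ induces an isomorphism onto $\Spec(R)$ and that $\mc{A}_r$ is reduced. The subtlety flagged in \S\ref{s:modmat} is that ``strongly nilpotent'' ($\tr = \det = 0$) is strictly stronger than $m^2 = 0$ over non-domains, so one must check the equations $\tr(m_i) = 0$, $\det(m_i) = 0$, $m_i m_j = 0$ really define the scheme-theoretic image of $Z$ and not something with extra nilpotents. I would argue this by: (i) computing $\Gamma(\P^1, \Sym(\eta))$ explicitly as a subring of a polynomial ring, (ii) writing down the universal tuple of matrices over $Z$ in coordinates on the two standard affine charts of $\P^1$, and (iii) checking directly that the ideal generated by the listed equations in the coordinate ring of the ambient space $M_2^r$ equals the kernel of the map to $R$ --- a finite, if slightly involved, calculation, made easier because everything is graded and one can work degree by degree. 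Once this identification is in hand, all the geometric conclusions transfer from $R$ to $\mc{A}_r$ verbatim.
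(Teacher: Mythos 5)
Your geometric picture is the same as the paper's: realize $\mc{A}_r$ as $\Spec(\Gamma(\P^1, \Sym(\eta)))$ with $\eta \cong \mc{O}(2)^{\oplus r}$ (the dual of the universal nilpotent subalgebra summed $r$ times), verify the cohomological hypotheses of Proposition~\ref{geo-1}, and read off Cohen--Macaulayness and the failure of the Gorenstein property from $\dim H^0(\P^1, \det(\eta)\otimes\omega_{\P^1}) = \dim H^0(\P^1, \mc{O}(2r-2)) = 2r-1$. Your self-correction on the sign of the twist is the right fix, and your normality argument via Serre's criterion ($(S_2)$ from Cohen--Macaulay, $(R_1)$ from the resolution being an isomorphism away from the origin of codimension $r+1 \ge 2$) is valid and slightly different from the paper, which instead observes that $\wt{R}=\Gamma(\wt{\mc{A}},\mc{O})$ is normal because $\wt{\mc{A}}$ is a smooth integral scheme.

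The genuine gap is your step (iii): showing that the ideal generated by the explicit equations \eqref{a-eq} equals the full kernel of $S = k[a_i,b_i,c_i] \to R = \Gamma(\P^1, \Sym(\eta))$. You call this ``a finite, if slightly involved, calculation, made easier because everything is graded and one can work degree by degree,'' but as stated there is no a priori reason this calculation terminates: one would need to know in advance in which degrees the kernel is generated, and with how many generators. This is exactly where the paper invokes Proposition~\ref{geo-2}. Applying that proposition with $\epsilon = (\ul{\mf{g}}^\circ)^{\oplus r}$ and $\xi^\vee$ the quotient bundle (so $\xi \cong \mc{O}(1)^{\oplus 2r}$), one computes $\Tor^0_S(R,k) = H^0(\P^1,\mc{O}) = k$ (giving surjectivity of $S \to R$ by Nakayama) and $\Tor^1_S(R,k) = H^0(\P^1,\xi)[1] \oplus H^1(\P^1,\lw{2}\xi)[2]$; the $H^0$ vanishes and the $H^1$ has dimension $\binom{r}{2}$, so the kernel is minimally generated by exactly $\binom{r}{2}$ quadrics. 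It then remains only to check the $\binom{r}{2}$ quadrics of \eqref{a-eq} are linearly independent. Without that degree-and-multiplicity bound from $\Tor^1$, your proposed direct verification does not close; I would replace step (iii) with the syzygy computation of Proposition~\ref{geo-2}, or else appeal to the standard structure theory of Segre--Veronese rings as in the remark following the theorem.
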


It is clear that $\mc{A}$ is an affine scheme.  In fact, write
\begin{displaymath}
m_i=\mat{a_i}{b_i}{c_i}{-a_i}.
\end{displaymath}
Then $\mc{A}=\Spec(R)$, where $R$ is the quotient of $k[a_i, b_i, c_i]_{1 \le i \le r}$ by the equations
\begin{equation}
\label{a-eq}
a_ia_j=b_i c_j, \qquad a_i b_j=a_j b_i, \qquad a_i c_j=a_j c_i.
\end{equation}
For $i \ne j$, these equations express the identity $m_i m_j=0$.  For $i=j$, the latter two equations are trivial,
while the first expresses that $m_i$ has determinant 0.

Let $\wt{\mc{A}}$ be the functor which attaches to a $k$-algebra $T$ the set of tuples $(\mf{u}; m_1, \ldots, m_r)$
where $\mf{u}$ is a nilpotent subalgebra of $(M_2)_T$ and each $m_i$ belongs to $\mf{u}$.  It is clear that
$\wt{\mc{A}}$ is represented by the total space of the vector bundle $\ul{\mf{u}}^{\oplus r}$ over $\P^1$, and is
thus smooth and geometrically integral of dimension $r+1$.  Let $\wt{R}$ be the ring of global functions on
$\wt{\mc{A}}$.  Then $\wt{R}$ is normal and geometrically integral.

Let $(\mf{u}; m_1, \ldots, m_r)$ be an element of $\wt{\mc{A}}(T)$.  Then each $m_i$ is strongly nilpotent, and
$m_i m_j=0$ for all $i$ and $j$.  Thus $(m_1, \ldots, m_r)$ defines an element of $\mc{A}(T)$.  We therefore have
a map of schemes $\wt{\mc{A}} \to \mc{A}$, and thus a corresponding map of rings $R \to \wt{R}$.

\begin{lemma}
The map $R \to \wt{R}$ is an isomorphism.
\end{lemma}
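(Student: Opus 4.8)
The plan is to show that the natural map $R \to \wt{R}$ is an isomorphism by exhibiting an explicit inverse on the level of rings of functions, or equivalently by showing that the map of schemes $\wt{\mc{A}} \to \mc{A}$ is an isomorphism. Since $\wt{R}$ is the ring of global functions on the total space of $\ul{\mf{u}}^{\oplus r}$ over $\P^1$, we have a concrete handle on it: the tautological line bundle $\ul{\mf{u}} \cong \mc{O}(-2)$ embeds in $\ul{\mf{g}}$, and the $i$-th universal section is a strongly nilpotent matrix with entries that are global sections of $\ul{\mf{g}} \otimes \mc{O}(2) = \mc{O}(2)^{\oplus 4}$. Writing $\P^1 = \Proj k[s,t]$, the nilpotent subalgebra $\uHom(\mc{O}^2/\mc{O}(-1), \mc{O}(-1))$ at the point $[s:t]$ is spanned by the rank-one matrix $\left(\begin{smallmatrix} st & -s^2 \\ t^2 & -st \end{smallmatrix}\right)$ (up to the $\mc{O}(-2)$-twist), so the $i$-th matrix over $\wt{\mc{A}}$ has the form $\lambda_i \left(\begin{smallmatrix} st & -s^2 \\ t^2 & -st \end{smallmatrix}\right)$ for a fiber coordinate $\lambda_i$; concretely $a_i = \lambda_i st$, $b_i = -\lambda_i s^2$, $c_i = \lambda_i t^2$, and one checks these satisfy the relations \eqref{a-eq}. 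This realizes $\wt{R}$ as the subring of $k[s,t,\lambda_1,\dots,\lambda_r]$ (suitably interpreted via the two standard affine charts $s \ne 0$ and $t \ne 0$, glued) generated by the products $a_i, b_i, c_i$ above.

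**Next I would check injectivity** of $R \to \wt{R}$: since $\mc{A}$ is defined by the evident equations and $\wt{\mc{A}}$ is geometrically integral of the same dimension $r+1$ as the expected dimension of $\mc{A}$ (one may see $\dim R = r+1$ directly, e.g. because the locus where some $m_i$ is nonzero is, away from the origin, a line bundle over $\P^1$ minus its zero section times an affine space, hence irreducible of dimension $r+1$, and this locus is dense), the map $\wt{\mc{A}} \to \mc{A}$ is a morphism between varieties of the same dimension. The key point is that $\wt{\mc{A}} \to \mc{A}$ is birational: away from the origin in $\mc{A}$, at least one $m_i$ is a nonzero strongly nilpotent matrix, hence genuinely rank one over the local ring at that point (using that $T$ is then essentially a domain, or arguing on the reduced locus), and a nonzero nilpotent rank-one matrix determines its kernel line uniquely, hence determines $\mf{u} = \uHom(\mc{O}^2/\mc{L},\mc{L})$ where $\mc{L}$ is that kernel; the remaining $m_j$ automatically lie in $\mf{u}$ since $m_i m_j = 0$ forces $\im(m_j) \subseteq \ker(m_i) = \mc{L}$ and $m_j m_i = 0$ forces $\mc{L} = \ker(m_i) \subseteq \ker(m_j)$, so $m_j \in \uHom(\mc{O}^2/\mc{L},\mc{L}) = \mf{u}$. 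Thus $\wt{\mc{A}} \to \mc{A}$ restricts to an isomorphism over the complement of the origin, in particular it is birational, and since $R$ injects into its total ring of fractions (once we know $R$ is a domain — which follows because $\wt{R}$ is a domain and $R \to \wt{R}$ is dominant, or can be checked directly on the equations) the map $R \to \wt{R}$ is injective.

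**For surjectivity**, the cleanest route is to invoke normality. We know $\wt{R}$ is normal (being global functions on a smooth variety), and $R \to \wt{R}$ is an integral birational extension of domains provided we know $R$ is a domain and the map is finite. Finiteness follows from properness of $\wt{\mc{A}} \to \Spec R$ — which is Lemma~\ref{geo-1a} applied with $X = \P^1$, $\eta = \ul{\mf{u}}^{\vee,\oplus r} \cong \mc{O}(2)^{\oplus r}$, since $\ul{\mf{u}}^\vee$ is generated by global sections — together with quasi-finiteness (the map is injective on $\ol k$-points, by the kernel-line recovery argument above, at least generically; to handle the origin note the fiber over $0$ is the zero section $\P^1$, which is proper, and then either argue that $R$ being finitely generated forces $\Spec \wt R \to \Spec R$ to factor through the normalization, or simply observe we only need $R \hookrightarrow \wt R$ with $\wt R$ integral over $R$). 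Then $R$, being a domain whose fraction field equals that of $\wt{R}$ and over which $\wt{R}$ is integral, has the same integral closure as $\wt{R}$; but I must still rule out the possibility that $R$ is strictly smaller than $\wt R$.

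**The main obstacle** is precisely this last point: showing $R$ is already normal (equivalently, that no functions are "missing"), so that $R \to \wt{R}$ is not merely a partial normalization. Here I expect to invoke Serre's criterion $(R_1) + (S_2)$ for $R$ directly from the equations \eqref{a-eq}, or — more in the spirit of this paper — to deduce it a posteriori: prove separately (as Theorem~\ref{thm-a} claims, presumably via Proposition~\ref{geo-1} applied to $\wt{R} = \Gamma(\P^1, \Sym(\mc{O}(2)^{\oplus r}))$) that $\wt R$ is Cohen--Macaulay and normal with $\wt{\mc{A}} \to \Spec\wt R$ an isomorphism away from $0$, and then show that $R \to \wt R$ induces an isomorphism on the punctured spectra and an isomorphism on completed local rings at the origin. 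The isomorphism on punctured spectra is the birationality argument above made precise scheme-theoretically (both punctured spectra are identified with the total space of $\mc{O}(2)^{\oplus r}$ over $\P^1$ minus its zero section, via the map that records the common kernel line and the scaling factors); and then since $R$ and $\wt R$ are both $S_2$ at the origin (a depth $\ge 2$ statement, so functions on the punctured spectrum extend), the agreement on the punctured spectrum forces $R = \wt R$. So after establishing the Cohen--Macaulay/normality facts about $\wt R$ (which is where Proposition~\ref{geo-1} does the real work), the present lemma reduces to the elementary kernel-line bookkeeping plus an $S_2$-extension argument.
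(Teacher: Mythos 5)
Your proposal takes a genuinely different route from the paper's, and it contains a real gap.

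The paper proves the lemma by a purely algebraic syzygy computation: it applies Proposition~\ref{geo-2} to compute $\Tor^{\bullet}_S(\wt{R},k)$, finding that $S\to\wt{R}$ is surjective with kernel generated entirely in degree $2$, with degree-$2$ part of dimension $\binom{r}{2}$. Since the $\binom{r}{2}$ quadrics of \eqref{a-eq} are linearly independent, lie in this kernel, and generate the kernel of $S\to R$ by definition, the two kernels coincide and $R=\wt{R}$. You instead try to identify $\Spec(\wt{R})$ with $\mc{A}$ geometrically: show that $\wt{\mc{A}}\to\mc{A}$ is an isomorphism off the origin (recovering the kernel line from a strongly nilpotent $m_i$ whose entries generate the unit ideal, and noting that $m_im_j=m_jm_i=0$ then forces $m_j\in\mf{u}$), establish finiteness via properness, and deduce $R=\wt{R}$ by an $S_2$-extension argument. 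The geometric portion is correct and in effect reproves Lemma~\ref{geo-1b} concretely; and it is true that the open locus where some $(a_i,b_i,c_i)$ generates the unit ideal is exactly $\mc{A}\setminus\{a\}$, so the birationality and finiteness claims are fine.

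The gap lies in the last step. To pass from an isomorphism on punctured spectra and finiteness to $R=\wt{R}$, you need $R$ to satisfy $S_2$ at the origin, so that $\Gamma(\Spec R\setminus\{0\},\mc{O})=R$. You recognize this is "the main obstacle" and assert it, but do not establish it, and the routes you sketch are circular or incomplete: Cohen--Macaulayness of $R$ is precisely what the paper obtains \emph{after} proving this lemma (via Proposition~\ref{geo-1} applied to $\wt{R}$), and verifying $(S_2)$ directly from the relations \eqref{a-eq} is not elementary. Without that input, your argument only shows that $\wt{R}$ is the $S_2$-ification of $R$; a cuspidal curve and its normalization satisfy all the other hypotheses (finite, birational, isomorphism away from one closed point) without being equal, so something extra really is needed. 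There is a secondary soft spot of the same kind: you want $R$ to be a domain (so that $R\hookrightarrow\wt{R}$), and justify it by saying the map is "dominant," but dominance in the scheme-theoretic sense is equivalent to the injectivity you are trying to establish. The paper's Tor computation avoids all of this because it never needs to know in advance that $R$ is reduced, normal, or $S_2$.
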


\begin{proof}
Let $\eta^{\vee}$ be the vector bundle $\ul{\mf{u}}^{\oplus r}$ on $\P^1$ and let $\epsilon^{\vee}$ be the bundle
$(\ul{\mf{g}}^{\circ})^{\oplus r}$.  Then $\eta^{\vee}$ is naturally a subbundle of $\epsilon^{\vee}$; let
$\xi^{\vee}$ be the quotient bundle.  We have isomorphisms $\eta=\mc{O}(2)^{\oplus r}$ and $\xi=\mc{O}(1)^{\oplus 2r}$.
Note that $\wt{R}=\Gamma(\P^1, \Sym(\eta))$.  Let $S$ be the polynomial ring $k[a_i, b_i, c_i]$, which is
identified with $\Gamma(\P^1, \Sym(\epsilon))$.

We now apply Proposition~\ref{geo-2}.  We find
\begin{displaymath}
\wt{R}/S_+ \wt{R}=H^0(\P^1, \mc{O}) \oplus H^1(\P^1, \xi)[1]=k,
\end{displaymath}
and so $S \to \wt{R}$ is surjective (by Nakayama's lemma).  Let $I$ be the kernel.  Then $\Tor^1_S(R, k)$ is
identified with $I/S_+ I$, and so Proposition~\ref{geo-2} gives
\begin{displaymath}
I/S_+ I=H^0(\P^1, \xi)[1] \oplus H^1(\P^1, \lw{2}{\xi})[2].
\end{displaymath}
The $H^0$ vanishes and the $H^1$ has dimension $\binom{r}{2}$.  Thus $I$ is generated in degree 2 and its degree 2
piece has dimension $\binom{r}{2}$.  An elementary argument shows that the $\binom{r}{2}$ quadratic elements of $S$
given in equation \eqref{a-eq} are linearly independent, and so the kernel of $S \to R$ is equal to $I$.  Thus
the map $R \to \wt{R}$ is an isomorphism.
\end{proof}

The theorem now follows immediately from Proposition~\ref{geo-1}.  To be precise, the above lemma shows that $R$
is normal and geometrically integral.  Let $\eta$ as above.  The space $Z$ in Proposition~\ref{geo-1} is just
$\wt{A}$, and the point 0 of $\Spec(R)$ is just $a$.  The proposition shows that $\wt{\mc{A}} \to \mc{A}$ is an
isomorphism away from $a$, that $\wt{R}=R$ is Cohen--Macaulay and that the local ring of $R$ at $a$ is not Gorenstein
when $r>1$.

\begin{remark}
The ring $R$ is isomorphic to the projective coordinate ring of $\P^1 \times \P^r$ with respect to the bundle
$\mc{O}(2, 1)$.  The singularities of such Segre--Veronese rings have been well-studied, and more general results
than Theorem~\ref{thm-a} appear in the literature; see, for instance, \cite[\S 0.4]{BarcanescuManolache}.
\end{remark}

\subsection{The space $\mc{B}$}
\label{bspace}

Let $\mc{B}=\mc{B}_r$ be the functor which assigns to a $k$-algebra $T$ the set $\mc{B}(T)$ of tuples $(\phi,
\alpha; m_1, \ldots, m_r)$ where $\phi$ is an element of $M_2(T)$ of determinant 1, $\alpha$ is an element of $T$
which is a root of the characteristic polynomial of $\phi$ and the $m_i$ are strongly nilpotent matrices in $M_2(T)$
such that $m_i m_j=0$ and $m_i \phi=\alpha m_i$.  Let $b \in \mc{B}(k)$ be the point $(1, 1; 0, \ldots, 0)$.  The
main result of this section is the following:

\begin{theorem}
\label{thm-b}
The functor $\mc{B}$ is (represented by) a geometrically integral normal Cohen--Macaulay affine scheme of dimension
$r+3$.  The local ring at the point $b$ is not Gorenstein.
\end{theorem}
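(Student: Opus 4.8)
The plan is to reduce Theorem~\ref{thm-b} to Proposition~\ref{geo-1} by exhibiting $\mc{B}$ as (essentially) the affine cone $\Spec(R)$ with $R=\Gamma(X,\Sym(\eta))$ for an appropriate variety $X$ and ample, globally generated bundle $\eta$, exactly as was done for $\mc{A}$ in Theorem~\ref{thm-a}. First I would introduce the resolution $\wt{\mc{B}}$: the functor attaching to $T$ the set of tuples $(\mf{u},\mc{L};\phi,\alpha;m_1,\dots,m_r)$ where $\mc{L}$ is a line subbundle of $\mc{O}_T^2$, $\mf{u}=\uHom(\mc{O}_T^2/\mc{L},\mc{L})$ is the associated nilpotent subalgebra, each $m_i\in\mf{u}$, $\phi\in M_2(T)$ has determinant $1$ and preserves $\mc{L}$ (so $\mf{b}=$ the Borel attached to $\mc{L}$ contains $\phi$), $\alpha$ is the eigenvalue of $\phi$ on $\mc{L}$, and $m_i\phi=\alpha m_i$ automatically (since $\phi$ acts on $\mathrm{im}(m_i)=\mc{L}$ by $\alpha$). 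Over $\P^1$, the data $(\phi,\alpha)$ with $\phi$ in the universal Borel $\ul{\mf{b}}$ and $\det\phi=1$, $\alpha$ the eigenvalue on $\mc{O}(-1)$, cuts out a subbundle-type condition. Concretely, writing $\phi=\alpha + u'$ on the line, $\det\phi=1$ forces the other eigenvalue to be $\alpha^{-1}$ — but to stay in the vector-bundle-cone framework I would instead parametrize by $(\alpha, u_\phi, n)$ where $n\in\ul{\mf{u}}$ is the strictly-upper-triangular part of $\phi$ and the lower-triangular entry is determined; the upshot should be that $\wt{\mc{B}}$ is the total space of a vector bundle $\eta^\vee$ over $X=\P^1\times\A^1$ (or a suitable affine-over-$\P^1$ base), hence smooth and geometrically integral of dimension $r+3$.

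Next I would run the Proposition~\ref{geo-2} computation to identify $\Gamma(\wt{\mc{B}},\mc{O})$ with $\mc{B}=\Spec(R)$: take $\epsilon^\vee$ the trivial bundle on $X$ spanned by the ambient coordinates (the entries of the $m_i$, the entries of $\phi$, and $\alpha$), let $\eta^\vee\subset\epsilon^\vee$ be the subbundle cut out by the linear-in-fiber conditions (each $m_i\in\mf{u}$, $\phi\in\mf{b}$, $\alpha$ = eigenvalue), and $\xi^\vee$ the quotient. One computes $H^\bullet(X,\lw{i}\xi)$, shows $S\to\wt{R}$ is surjective with kernel generated in degree $2$, and matches the degree-two generators with the explicit equations defining $\mc{B}$ (the entries of $m_im_j$, the entries of $m_i\phi-\alpha m_i$, $\det(\phi)-1$, $\alpha^2-\tr(\phi)\alpha+1$). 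A dimension count $\dim H^1(X,\lw{2}\xi)$ must equal the number of independent quadratic relations; this is the analogue of the "elementary argument" in the $\mc{A}$ case and I expect it to go through, giving $R\cong\wt{R}$, so $R$ is normal and geometrically integral.

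Finally I would apply Proposition~\ref{geo-1} with $Z=\wt{\mc{B}}$: check that $X$ is normal Cohen--Macaulay (it is smooth), that $\eta$ is ample and globally generated, and that $H^i(X,\Sym\eta)=H^i(X,\Sym\eta\otimes\det\eta\otimes\omega_X)=0$ for $i>0$; the first vanishing is equivalent to $H^{>0}(Z,\mc{O}_Z)=0$ which is built into the $\Tor$ computation, and the second is a Serre-vanishing / Künneth estimate on $\P^1\times\A^1$ (or whatever $X$ turns out to be). This yields that $\wt{\mc{B}}\to\mc{B}$ is an isomorphism away from $b$, that $R$ is Cohen--Macaulay, and — via the criterion $\dim H^0(X,\det\eta\otimes\omega_X)>1$ — that $R$ is not Gorenstein at $b$. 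Here I would compute $\det\eta\otimes\omega_X$ explicitly: $\omega_{\wt{\mc{B}}}=\pi^*(\det\eta\otimes\omega_X)$ and its space of global sections is the first graded piece of $\omega_R$; the point $b$ corresponds to $0$, and as long as this $H^0$ has dimension at least $2$ we are done. The extra $\A^1$-factor (the $\phi$-direction) compared to $\mc{A}$ is exactly what makes the non-Gorenstein conclusion hold for all $r\ge 1$ rather than only $r>1$.

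The main obstacle I anticipate is pinning down the correct base variety $X$ and bundle $\eta$ so that $\wt{\mc{B}}$ is genuinely a vector-bundle total space: the constraint $\det\phi=1$ is quadratic, not linear, in the entries of $\phi$, so one cannot naively take $\phi$'s four entries as linear fiber coordinates. The right move is to use the Borel structure — once $\mc{L}$ is fixed, $\phi$ preserving $\mc{L}$ with eigenvalue $\alpha$ on $\mc{L}$ and $\alpha^{-1}$ on the quotient is governed by $\alpha$ (a coordinate on $\A^1$, or $\G_m$, requiring care about where $\alpha=0$) together with one nilpotent off-diagonal parameter, which \emph{is} linear in the fiber. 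Getting this bookkeeping right — including checking the resulting $\eta$ is still ample and globally generated, and that the cohomology vanishing survives the non-properness of the $\A^1$-direction (one may need to compactify or argue directly that the relevant $R^iq_*$ vanish) — is where the real work lies; the rest is a mechanical repeat of the $\mc{A}$ argument.
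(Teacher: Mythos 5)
Your proposal has a genuine gap at the point you yourself flag as the main obstacle, and the workaround you sketch does not actually resolve it. Proposition~\ref{geo-1} requires the base $X$ to be \emph{proper} over $k$: the whole argument there runs through the affine cone $\Spec(R)$ being a finite-dimensional graded ring with $R_0=k$, properness of $Z\to\Spec(R)$ via Lemma~\ref{geo-1a}, and Grothendieck duality for the proper birational map in Lemma~\ref{geo-1c}. Your proposed base $\P^1\times\A^1$ (or worse, $\P^1\times\G_m$, which is what the constraint ``$\alpha$ invertible'' really forces once you insist $\det\phi=1$) is not proper, so $\Gamma(X,\Sym\eta)$ is no longer a graded ring with one-dimensional degree-zero piece, there is no distinguished point $0$, the phrase ``$\eta$ ample'' stops making sense in the form the paper uses it, and the Gorenstein criterion $\dim H^0(X,\det\eta\otimes\omega_X)>1$ loses its meaning. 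Saying ``one may need to compactify or argue directly that the relevant $R^iq_*$ vanish'' acknowledges the hole but does not fill it; these are exactly the things Proposition~\ref{geo-1} was built not to have to handle.

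The paper's actual route sidesteps this entirely and is the idea you are missing: do not try to fold the determinant condition into the vector-bundle picture at all. Introduce $\mc{B}^\circ$, defined like $\mc{B}$ but with \emph{no} condition on $\det(\phi)$; then all the defining equations are homogeneous of degree two, $R^\circ$ is graded, and the resolution $\wt{\mc{B}}^\circ$ is honestly the total space of a vector bundle $\eta^\vee=\ul{\mf b}\oplus\ul{\mf u}^{\oplus r}$ over $\P^1$, which is proper. The bundle $\ul{\mf b}$ contains a trivial $\mc{O}$ summand (the scalar matrices), so $\eta=\eta'\oplus\mc{O}$ with $\eta'\cong\mc{O}(1)^{\oplus 2}\oplus\mc{O}(2)^{\oplus r}$ ample and globally generated; Proposition~\ref{geo-1} applies to $\eta'$ over $\P^1$, and $R^\circ=A[T]$ with $A=\Gamma(\P^1,\Sym\eta')$ inherits Cohen--Macaulay and non-Gorenstein. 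Only at the very end is $\det(\phi)-1$ imposed, as a non-zero-divisor cutting $\wt{R}$ out of the Cohen--Macaulay ring $\wt{R}^\circ$; Cohen--Macaulayness and non-Gorenstein-ness then descend by the standard hypersurface-section facts in Bruns--Herzog. Your computation in the middle paragraph (Proposition~\ref{geo-2}, matching degree-two Tor with the explicit relations) is the right mechanism, but it is applied to the wrong object: in the paper it is run for $R^\circ\to\wt{R}^\circ$, where there is also a subtlety you would have to deal with, namely that $\alpha$ is not a coordinate of the ambient polynomial ring $S$, so the free cover must be taken to be $S\oplus S[1]$ rather than $S$, and the relation $\alpha^2-\tr(\phi)\alpha+\det(\phi)=0$ does \emph{not} appear among the degree-two relations of $M$. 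These details are manageable once you have the $\mc{B}^\circ$ framework, but not before.
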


It is clear that $\mc{B}$ is an affine scheme.  In fact, write
\begin{displaymath}
\phi=\mat{\phi_1}{\phi_2}{\phi_3}{\phi_4}
\end{displaymath}
and keep the notation for the $m_i$ from the previous section.  Then $\mc{B}=\Spec(R)$, where $R$ is the quotient of
$k[a_i, b_i, c_i, \phi_j, \alpha]$ (with $1 \le i \le r$ and $1 \le j \le 4$) by the equations \eqref{a-eq}, the
equations
\begin{equation}
\label{b-eq-1}
a_i \phi_1+b_i \phi_3=\alpha a_i, \qquad
a_i \phi_2+b_i \phi_4=\alpha b_i, \qquad
c_i \phi_1-a_i \phi_3=\alpha c_i, \qquad
c_i \phi_2-a_i \phi_4=-\alpha a_i,
\end{equation}
the equation
\begin{equation}
\label{b-eq-2}
\alpha^2-(\phi_1+\phi_4) \alpha+(\phi_1 \phi_4-\phi_2 \phi_3)=0
\end{equation}
and the equation
\begin{equation}
\label{b-eq-3}
\phi_1 \phi_4-\phi_2 \phi_3=1.
\end{equation}
Of course, the equaion \eqref{b-eq-1} express the identity $m_i \phi=\alpha m_i$, while \eqref{b-eq-2} expresses that
$\alpha$ is a root of the characteristic polynomial of $\phi$ and \eqref{b-eq-3} expresses that $\phi$ has determinant
1.

Let $\mc{B}^{\circ}$ be defined like $\mc{B}$ except without any condition on the determinant of $\phi$.  Then
$\mc{B}^{\circ}=\Spec(R^{\circ})$, where $R^{\circ}$ is the quotient of $k[a_i, b_i, c_i, \phi_j, \alpha]$ by the
equations \eqref{a-eq}, \eqref{b-eq-1} and \eqref{b-eq-2}.  Note that these equations are all homogeneous of degree
two, and so $R^{\circ}$ is graded.

Let $\wt{\mc{B}}^{\circ}$ be the functor assigning to a $k$-algebra $T$ the set of tuples
$(\mf{b}; \phi; m_1, \ldots, m_r)$ where $\mf{b}$ is a Borel subalgebra of $\mf{g}_T$, $\phi$ is an element of $\mf{b}$
and the $m_i$ are elements of $\mf{u}$, the nilpotent radical of $\mf{b}$.  It is clear that $\wt{\mc{B}}^{\circ}$
is represented by the total space of the vector bundle $\ul{\mf{b}} \oplus \ul{\mf{u}}^{\oplus r}$ over $\P^1$.  We
write $\wt{R}^{\circ}$ for the ring of global functions on $\wt{B}^{\circ}$.

We let $\wt{\mc{B}}$ be defined like $\wt{\mc{B}}^{\circ}$ but with the condition $\det(\phi)=1$ imposed.  Note
that $\wt{\mc{B}}$ is the fiber product of the universal Borel subgroup of $\SL(2)$ with a vector bundle over $\P^1$,
and is therefore smooth and geometrically integral of dimension $r+3$.  We let $\wt{R}$ be the ring of global
functions on $\wt{\mc{B}}$.

Let $(\mf{b}; \phi; m_1, \ldots, m_r)$ be an element of $\wt{\mc{B}}^{\circ}(T)$.  Let $\alpha$ be the eigenvalue
by which $\phi$ acts on the $[\mf{b}, \mf{b}]$ coinvariants of $T^2$ (e.g., the lower right entry of $\phi$ if $\mf{b}$
is upper triangular).  Then $\alpha$ satisfies the characteristic
polynomial of $\phi$, and $m_i \phi=\alpha m_i$.  It follows that $(\phi, \alpha; m_1, \ldots, m_r)$ is an element of
$\mc{B}^{\circ}$.  We thus have a natural map $\wt{\mc{B}}^{\circ} \to \mc{B}^{\circ}$, and thus an induced map
$R^{\circ} \to \wt{R}^{\circ}$.  Of course, we also have $\wt{\mc{B}} \to \mc{B}$ and $R \to \wt{R}$.

We summarize the above definitions with two commutative diagrams
\begin{displaymath}
\xymatrix{
\wt{\mc{B}} \ar@{^(->}[r] \ar[d] & \wt{\mc{B}}^{\circ} \ar[d] \\
\mc{B} \ar@{^(->}[r] & \mc{B}^{\circ} }
\qquad\qquad
\xymatrix{
\wt{R} & \wt{R}^{\circ} \ar[l] \\
R \ar[u] & R^{\circ} \ar@{->>}[l] \ar[u] }
\end{displaymath}
We now show that the vertical ring maps in the right diagram are isomorphisms.

\begin{lemma}
\label{lem-b1}
The map $R^{\circ} \to \wt{R}^{\circ}$ is an isomorphism.
\end{lemma}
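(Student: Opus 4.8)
The plan is to mimic the proof of the corresponding statement for $\mc{A}$, applying Weyman's geometric method in the form of Proposition~\ref{geo-2}. By definition $\wt{\mc{B}}^{\circ}$ is the total space of $\ul{\mf{b}} \oplus \ul{\mf{u}}^{\oplus r}$ over $\P^1$, so, setting $\eta^{\vee}=\ul{\mf{b}} \oplus \ul{\mf{u}}^{\oplus r}$, we have $\wt{R}^{\circ}=\Gamma(\P^1, \Sym(\eta))$. Using the identifications $\ul{\mf{b}} \cong \mc{O}(-1)^{\oplus 2} \oplus \mc{O}$ and $\ul{\mf{u}} \cong \mc{O}(-2)$ established above, we get $\eta \cong \mc{O}(1)^{\oplus 2} \oplus \mc{O} \oplus \mc{O}(2)^{\oplus r}$. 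In particular $\eta$ is globally generated and $\Gamma(\P^1, \eta)$ has dimension $3r+5$; I would identify this space with the span of the coordinates $a_i, b_i, c_i, \phi_1, \dots, \phi_4, \alpha$, viewed as linear functions on $\wt{\mc{B}}^{\circ}$ (the entries of the $m_i$, the entries of $\phi$, and the coinvariant eigenvalue). Taking $\epsilon = \Gamma(\P^1,\eta) \otimes \mc{O}_{\P^1}$, so that $\epsilon$ is globally free and $S:=\Gamma(\P^1, \Sym(\epsilon))=k[a_i, b_i, c_i, \phi_j, \alpha]$, the kernel $\xi$ of the evaluation map $\epsilon \to \eta$ splits along the line-bundle summands of $\eta$: for each $\mc{O}(d)$ with $d \in \{0,1,2\}$ the associated kernel bundle on $\P^1$ is $\mc{O}(-1)^{\oplus d}$ (an elementary computation), and therefore $\xi \cong \mc{O}(-1)^{\oplus (2r+2)}$.

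With these bundles in hand I would run Proposition~\ref{geo-2}. Its hypothesis holds: every symmetric power of $\eta$ is a direct sum of line bundles $\mc{O}(m)$ with $m \ge 0$, so $H^i(\P^1, \Sym(\eta))=0$ for $i>0$. Applying the proposition with $n=0$ gives $\wt{R}^{\circ}/S_+ \wt{R}^{\circ}=H^0(\P^1,\mc{O}) \oplus H^1(\P^1,\xi)[1]=k$ (since $H^1(\P^1, \mc{O}(-1))=0$), so $S \to \wt{R}^{\circ}$ is surjective by Nakayama; let $I$ be the kernel. Applying the proposition with $n=1$ gives $I/S_+ I=H^0(\P^1,\xi)[1] \oplus H^1(\P^1, \lw{2}{\xi})[2]$; the first term vanishes as $H^0(\P^1,\mc{O}(-1))=0$, and $\lw{2}{\xi} \cong \mc{O}(-2)^{\oplus \binom{2r+2}{2}}$ has $H^1$ of dimension $\binom{2r+2}{2}$. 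Hence $I$ is generated in degree $2$ and $\dim_k I_2=\binom{2r+2}{2}$.

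Finally I would compare with the explicit presentation of $R^{\circ}$. The relations \eqref{a-eq}, \eqref{b-eq-1}, \eqref{b-eq-2} contribute $\binom{2r}{2}$, $4r$, and $1$ quadratic elements of $S$ respectively (note that in \eqref{a-eq} the relation $a_ia_j=b_ic_j$ must be counted for all ordered pairs $(i,j)$), for a total of $\binom{2r}{2}+4r+1=\binom{2r+2}{2}$, matching $\dim_k I_2$. An elementary argument shows these relations are linearly independent: the three families have pairwise disjoint supports among the quadratic monomials of $S$ (products of two of the $a_i,b_i,c_i$; products of one of the $a_i,b_i,c_i$ with one of $\phi_1,\dots,\phi_4,\alpha$; products among $\phi_1,\dots,\phi_4,\alpha$), and within each family independence is immediate. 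Since all of these relations lie in $I$ and $I$ is generated in degree $2$, they generate $I$; but the ideal they generate is by definition the kernel of $S \to R^{\circ}$, so $R^{\circ} \to \wt{R}^{\circ}$ is an isomorphism.

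The argument is essentially formal once Proposition~\ref{geo-2} is available. The points that require genuine care are the identification of $\Gamma(\P^1,\eta)$ with the polynomial ring $k[a_i,b_i,c_i,\phi_j,\alpha]$ -- in particular that the coinvariant eigenvalue $\alpha$ really is one of the degree-one generators of $\wt{R}^{\circ}$, which forces the use of $\epsilon=\Gamma(\P^1,\eta)\otimes\mc{O}$ rather than the more naive choice $\ul{\mf{g}} \oplus (\ul{\mf{g}}^{\circ})^{\oplus r}$ (for which $S \to \wt{R}^{\circ}$ fails to be surjective) -- together with the determination of $\xi$. The combinatorial identity and the linear independence of the relations are routine.
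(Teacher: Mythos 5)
Your proof is correct, but it takes a genuinely different route from the paper's. The paper deliberately keeps $S=k[a_i,b_i,c_i,\phi_j]$, i.e.\ it takes $\epsilon^{\vee}=\ul{\mf{g}}\oplus(\ul{\mf{g}}^{\circ})^{\oplus r}$, so that $S\to\wt{R}^{\circ}$ is \emph{not} surjective; it then observes that $\wt{R}^{\circ}/S_+\wt{R}^{\circ}\cong k\oplus k[1]$ (spanned by $1$ and $\alpha$), works instead with the presentation $S\oplus S[1]\to\wt{R}^{\circ}$ (and the corresponding one for $R^{\circ}$), and compares the two kernels $M\subset N$ via a $\Tor_1$ computation in degree $2$, finding both of dimension $4r+\binom{2r}{2}$ (the relation \eqref{b-eq-2} is absorbed into the choice of lift of $\alpha^2$ and does not appear in $M_2$). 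You instead enlarge $\epsilon$ to $\Gamma(\P^1,\eta)\otimes\mc{O}$, thereby adjoining $\alpha$ to $S$ and making $S\to\wt{R}^{\circ}$ surjective; this puts the argument in exactly the same shape as the one already used for $\mc{A}$, with $\xi\cong\mc{O}(-1)^{\oplus(2r+2)}$ and a single Tor count $\dim I_2=\binom{2r+2}{2}=\binom{2r}{2}+4r+1$. Your route is arguably cleaner since it avoids the ad hoc two-generator presentation $S\oplus S[1]$; the price, which you correctly flag, is that one must verify that the coinvariant eigenvalue $\alpha$ really is an independent degree-one global section of $\eta$ (the five sections $\phi_1,\dots,\phi_4,\alpha$ spanning the five-dimensional space $\Gamma(\P^1,\ul{\mf{b}}^{\vee})$, which sits in $0\to k^4\to\Gamma(\ul{\mf{b}}^{\vee})\to H^1(\mc{O}(-2))\to 0$). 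The counting and independence checks you give are the same as in the paper; the only place I would add a sentence is the independence of the $\binom{2r}{2}$ relations in \eqref{a-eq}, which, as in the paper, deserves at least the phrase ``an elementary argument'' rather than ``immediate.''
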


\begin{proof}
Let $\eta^{\vee}$ be the vector bundle $\ul{\mf{b}} \oplus \ul{\mf{u}}^{\oplus r}$ on $\P^1$, which is isomorphic
to $\mc{O} \oplus \mc{O}(-1)^{\oplus (r+2)}$.  The bundle $\eta^{\vee}$ is naturally
a subbundle of the constant bundle $\epsilon^{\vee}=\ul{\mf{g}} \oplus (\ul{\mf{g}}^{\circ})^{\oplus r}$.
The quotient bundle $\xi^{\vee}$ is isomorphic to $\mc{O}(2) \oplus \mc{O}(1)^{\oplus 2r}$.  The ring $\wt{R}^{\circ}$
is $\Gamma(\P^1, \Sym(\eta))$.  Let $S$ be the ring $\Gamma(\P^1, \Sym(\epsilon))$, i.e., the polynomial ring
$k[a_i, b_i, c_i, \phi_i]$.  (Note: $S$ does not contain $\alpha$.)  Then $\wt{R}^{\circ}$ is naturally an
$S$-algebra, as is $R^{\circ}$, and the map $R^{\circ} \to \wt{R}^{\circ}$ is $S$-linear.

It is clear that $R^{\circ}/S_+ R^{\circ}$ is isomorphic to $k \oplus k[1]$, and spanned by the images of 1 and
$\alpha$.  Proposition~\ref{geo-2} shows that
\begin{displaymath}
\wt{R}^{\circ}/S_+ \wt{R}^{\circ}=H^0(\P^1, \mc{O}) \oplus H^1(\P^1, \xi)[1]=k \oplus k[1].
\end{displaymath}
Since $\alpha$ maps to a non-zero element of $\wt{R}^{\circ}$, we see that the map $R^{\circ} \to \wt{R}^{\circ}$
induces an isomorphism after quotienting by $S_+$, and is therefore surjective.

Consider the diagram
\begin{displaymath}
\xymatrix{
0 \ar[r] & M \ar[d] \ar[r] & S \oplus S[1] \ar[r] \ar@{=}[d] & R^{\circ} \ar[d] \ar[r] & 0 \\
0 \ar[r] & N \ar[r] & S \oplus S[1] \ar[r] & \wt{R}^{\circ} \ar[r] & 0 }
\end{displaymath}
The map $S \oplus S[1] \to R^{\circ}$ sends the basis vectors to 1 and $\alpha$.  The map to $\wt{R}^{\circ}$ is
defined similarly.  These maps are surjective by the previous paragraph.  We let $M$ and $N$ denote their kernels,
so that $M \subset N$.  Now, the equations for $\wt{R}^{\circ}$ are all of degree 2, which means that the natural
map $M_2 \to M/S_+M$ is an isomorphism.  On the other hand,
$N/S_+N$ is identified with $\Tor_1^S(\wt{R}^{\circ}, k)$, and so another application of Proposition~\ref{geo-2}
yields
\begin{displaymath}
N/S_+ N=H^0(\P^1, \xi)[1] \oplus H^1(\P^1, \lw{2}{\xi})[2],
\end{displaymath}
and so $N/S_+ N$ is concentrated in degree 2 and of dimension $4r+\binom{2r}{2}$.  We thus have a diagram
\begin{displaymath}
\xymatrix{
M_2 \ar[r] \ar[d] & M/S_+ M \ar[d] \\
N_2 \ar[r] & N/S_+ N }
\end{displaymath}
in which the horizontal maps are isomorphisms.  Obviously, the left vertical map is injective.  An elementary
argument shows the $\binom{2r}{2}$ equations in \eqref{a-eq} and the $4r$ equations in \eqref{b-eq-1} are linearly
independent, and so $M_2$ has dimension $4r+\binom{2r}{2}$.  (Note: the equation \eqref{b-eq-2} does not appear in
$M$.)  We therefore find that $M_2 \to N_2$ is surjective,
and so $M/S_+ M \to N/S_+ N$ is surjective, and so $M \to N$ is surjective, i.e., $M=N$.  This completes the proof.
\end{proof}

\begin{lemma}
\label{lem-b3}
The map $R \to \wt{R}$ is an isomorphism.
\end{lemma}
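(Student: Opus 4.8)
The plan is to deduce Lemma~\ref{lem-b3} from Lemma~\ref{lem-b1} by cutting down along the single relation $\det(\phi)=1$. By construction $\wt{\mc{B}}$ is the closed subscheme of $\wt{\mc{B}}^{\circ}$ on which the function $\det(\phi)-1$ vanishes, and similarly $R=R^{\circ}/(\phi_1\phi_4-\phi_2\phi_3-1)$ by equation~\eqref{b-eq-3}. Moreover, the isomorphism $R^{\circ}\xrightarrow{\sim}\wt{R}^{\circ}$ of Lemma~\ref{lem-b1} carries the class of $\phi_1\phi_4-\phi_2\phi_3-1$ to the function $\det(\phi)-1$ on $\wt{\mc{B}}^{\circ}$, since it sends $\phi_j$ to the corresponding entry of the universal section of $\ul{\mf{b}}$ and $\det(\phi)=\phi_1\phi_4-\phi_2\phi_3$. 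Hence it suffices to show that the natural surjection $\wt{R}^{\circ}\to\wt{R}$ has kernel exactly $(\det(\phi)-1)$: given that, the map $R\to\wt{R}$ is identified with the induced map $R^{\circ}/(\phi_1\phi_4-\phi_2\phi_3-1)\to\wt{R}^{\circ}/(\det(\phi)-1)$, which is an isomorphism by Lemma~\ref{lem-b1}.

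To compute this kernel, observe that $\wt{\mc{B}}^{\circ}$ is integral, being the total space of a vector bundle over $\P^1$, and that $\det(\phi)-1$ is not the zero function on it (it takes the value $-1$ at any point with $\phi=0$), hence is a non-zero-divisor. We therefore have an exact sequence of sheaves on $\wt{\mc{B}}^{\circ}$,
\begin{displaymath}
0 \to \mc{O}_{\wt{\mc{B}}^{\circ}} \xrightarrow{\,\det(\phi)-1\,} \mc{O}_{\wt{\mc{B}}^{\circ}} \to \mc{O}_{\wt{\mc{B}}} \to 0.
\end{displaymath}
Taking global sections, and using that the projection $\wt{\mc{B}}^{\circ}\to\P^1$ is affine with pushforward $\Sym(\eta)$, so that $H^1(\wt{\mc{B}}^{\circ},\mc{O})=H^1(\P^1,\Sym(\eta))$, we are reduced to the vanishing $H^i(\P^1,\Sym(\eta))=0$ for $i>0$. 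But this is exactly the hypothesis needed (and verified) when Proposition~\ref{geo-2} is applied in the proof of Lemma~\ref{lem-b1}; concretely it holds because $\eta$ is a direct sum of line bundles of nonnegative degree. Thus $\wt{R}=\wt{R}^{\circ}/(\det(\phi)-1)$, and the lemma follows.

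Everything here is routine once Lemma~\ref{lem-b1} is in hand; the only point that genuinely needs to be checked is the cohomology vanishing above, which is what permits computing the ring of functions on the non-affine scheme $\wt{\mc{B}}$ as an honest quotient of $\wt{R}^{\circ}$.
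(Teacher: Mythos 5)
Your proof is correct and follows the same route as the paper: reduce via Lemma~\ref{lem-b1} to showing $\wt{R}^{\circ}\to\wt{R}$ is surjective with kernel $(\det(\phi)-1)$, then deduce this from the short exact sequence of structure sheaves and the vanishing $H^1(\P^1,\Sym(\eta))=0$. The only cosmetic difference is that you set up the exact sequence on $\wt{\mc{B}}^{\circ}$ and push forward, while the paper writes it directly on $\P^1$; you also make explicit the non-zero-divisor check that the paper leaves implicit.
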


\begin{proof}
Due to the previous lemma, it is enough to show that $\wt{R}^{\circ} \to \wt{R}$ is surjective with kernel generated
by $\det(\phi)-1$.  Now, the space $\wt{\mc{B}}^{\circ}$ is affine over $\P^1$ and corresponds
to the sheaf of algebras $\Sym(\eta)$.  The space $\wt{\mc{B}}$ is also affine over $\P^1$; let $\mc{R}$ denote the
corresponding sheaf of algebras.  We have a short exact sequence of sheaves on $\P^1$
\begin{displaymath}
0 \to \Sym(\eta) \to \Sym(\eta) \to \mc{R} \to 0
\end{displaymath}
where the first map is given by multiplication by $\det(\phi)-1$, an element of $\Gamma(\P^1, \Sym(\eta))$.  Since
$H^1(\P^1, \Sym(\eta))=0$, upon taking global sections we see that $\wt{R}^{\circ} \to \wt{R}$ is surjective and its
kernel is generated by $\det(\phi)-1$.
\end{proof}

Let $\mc{B}^{\circ}_0(T)$ be the subset of $\mc{B}^{\circ}(T)$ consisting of tuples $(\phi, \alpha; m_1, \ldots, m_r)$
where $m_i=0$ and $\phi=\alpha$ is a scalar matrix.  Define $\wt{\mc{B}}^{\circ}$ similarly.

\begin{lemma}
\label{lem-b2}
The map $\wt{\mc{B}}^{\circ} \setminus \wt{\mc{B}}^{\circ}_0 \to \mc{B}^{\circ} \setminus \mc{B}^{\circ}_0$ is an
isomorphism.  The space $\mc{B}^{\circ}$ is Cohen--Macaulay; moreover, it is smooth away from $\mc{B}^{\circ}_0$ and
not Gorenstein at any point in $\mc{B}^{\circ}_0$.
\end{lemma}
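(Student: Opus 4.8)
The plan is to split off from the bundle $\eta$ the trivial summand that records the scalar part of $\phi$, thereby reducing both assertions to Proposition~\ref{geo-1} applied to an ample bundle on $\P^1$, together with elementary facts about products with the affine line. Concretely, I would first use Lemma~\ref{lem-b1} to identify $\mc{B}^{\circ}$ with $\Spec \Gamma(\P^1, \Sym(\eta))$, where $\eta^{\vee} = \ul{\mf{b}} \oplus \ul{\mf{u}}^{\oplus r} \cong \mc{O} \oplus \mc{O}(-1)^{\oplus 2} \oplus \mc{O}(-2)^{\oplus r}$, using $\ul{\mf{b}} = \ul{\mf{b}}^{\circ} \oplus \mc{O}$ with $\ul{\mf{b}}^{\circ} \cong \mc{O}(-1)^{\oplus 2}$ and $\ul{\mf{u}} \cong \mc{O}(-2)$. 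Writing $\eta = \mc{O} \oplus \eta'$ with $\eta' \cong \mc{O}(1)^{\oplus 2} \oplus \mc{O}(2)^{\oplus r}$, the $\mc{O}$-summand records the scalar part $s$ of $\phi$ (so $\phi = \phi^{\circ} + sI$), and since $\Sym(\eta) \cong \Sym(\eta')[s]$ we obtain $\mc{B}^{\circ} \cong \mc{C} \times \A^1$ and $\wt{\mc{B}}^{\circ} \cong Z \times \A^1$, where $\mc{C} = \Spec \Gamma(\P^1, \Sym(\eta'))$ and $Z$ is the total space of $(\eta')^{\vee}$ over $\P^1$; these identifications are compatible with the map $\wt{\mc{B}}^{\circ} \to \mc{B}^{\circ}$, which becomes $f \times \mathrm{id}_{\A^1}$ for the canonical map $f\colon Z \to \mc{C}$. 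I would then check that under this splitting the locus $\mc{B}^{\circ}_0$ (where $m_i = 0$ and $\phi$ is scalar) is exactly $\{0\} \times \A^1$, with $0$ the cone point of $\mc{C}$, and that $\wt{\mc{B}}^{\circ}_0$ is $Z_0 \times \A^1$, with $Z_0$ the zero section of $Z \to \P^1$.

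Next I would apply Proposition~\ref{geo-1} with $X = \P^1$ and $\eta$ replaced by $\eta'$. The space $\P^1$ is smooth, hence normal and Cohen--Macaulay; $\eta'$ is a direct sum of ample, globally generated line bundles, hence ample and globally generated; $\Sym(\eta')$ is a direct sum of line bundles $\mc{O}(n)$ with $n \ge 0$, so $H^i(\P^1, \Sym(\eta')) = 0$ for $i > 0$; and since $\det(\eta') \otimes \omega_{\P^1} = \mc{O}(2r+2) \otimes \mc{O}(-2) = \mc{O}(2r)$, the sheaf $\Sym(\eta') \otimes \det(\eta') \otimes \omega_{\P^1}$ is a direct sum of $\mc{O}(n)$ with $n \ge 2r$, whose higher cohomology again vanishes. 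Proposition~\ref{geo-1} then gives: $f$ restricts to an isomorphism $Z \setminus Z_0 \to \mc{C} \setminus \{0\}$; the coordinate ring of $\mc{C}$ is Cohen--Macaulay; and, because $\dim H^0(\P^1, \mc{O}(2r)) = 2r+1 > 1$ for $r \ge 1$, the local ring $\mc{O}_{\mc{C},0}$ is not Gorenstein.

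Finally I would transport these conclusions across the product with $\A^1$. Crossing the isomorphism of the previous paragraph with $\A^1$ yields $\wt{\mc{B}}^{\circ} \setminus \wt{\mc{B}}^{\circ}_0 \xrightarrow{\sim} \mc{B}^{\circ} \setminus \mc{B}^{\circ}_0$. Cohen--Macaulayness of $\mc{C}$ passes to the polynomial extension $\mc{C} \times \A^1 = \mc{B}^{\circ}$. The scheme $\wt{\mc{B}}^{\circ}$ is smooth, being a vector bundle over $\P^1$, so its open subscheme $\wt{\mc{B}}^{\circ} \setminus \wt{\mc{B}}^{\circ}_0 \cong \mc{B}^{\circ} \setminus \mc{B}^{\circ}_0$ is smooth as well. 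And for a point $x \in \mc{B}^{\circ}_0 = \{0\} \times \A^1$, killing a uniformizer of the $\A^1$-factor (a non-zero-divisor in the Cohen--Macaulay local ring $\mc{O}_{\mc{B}^{\circ},x}$) leaves a localization of the non-Gorenstein ring $\mc{O}_{\mc{C},0}$, so $\mc{O}_{\mc{B}^{\circ},x}$ is not Gorenstein; alternatively, the Gorenstein locus of the Cohen--Macaulay scheme $\mc{B}^{\circ}$ is open, so its complement is a closed set containing the dense set of closed points of $\mc{B}^{\circ}_0$ and hence all of $\mc{B}^{\circ}_0$.

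The main obstacle is the bookkeeping in the first step: one must make the isomorphism $\mc{B}^{\circ} \cong \mc{C} \times \A^1$ genuinely compatible with the two ``bad loci'', which requires tracking the auxiliary coordinate $\alpha$. This $\alpha$ is not one of the coordinates $a_i, b_i, c_i, \phi_j$ on the polynomial ring $S$ of Lemma~\ref{lem-b1}, but is of the form $\alpha = s + \alpha'$ for a degree-one element $\alpha' \in \Gamma(\P^1, \Sym(\eta'))$, so that on $\mc{B}^{\circ}_0$ (where $\phi$ is scalar, hence $\alpha' = 0$) one has $\alpha = s$, consistent with the description of $\mc{B}^{\circ}_0$ as $\{0\} \times \A^1$. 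Everything else is a routine check of the hypotheses of Proposition~\ref{geo-1} and of standard Cohen--Macaulay / Gorenstein behaviour under products with affine space.
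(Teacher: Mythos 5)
Your proof is correct and follows essentially the same route as the paper's: split off the trivial $\mc{O}$-summand of $\eta$ corresponding to the scalar part of $\phi$, identify $\mc{B}^{\circ}$ with $\Spec\Gamma(\P^1,\Sym(\eta')) \times \A^1$ and $\mc{B}^{\circ}_0$ with $\{0\}\times\A^1$, apply Proposition~\ref{geo-1} to $\eta'$, and transport the conclusions across the product with $\A^1$. You spell out the hypothesis checks and the $\A^1$ transfer in more detail than the paper, which simply asserts ``Proposition~\ref{geo-1} applies to $\eta'$'' and ``The lemma follows,'' but the underlying argument is the same. (One small caution: using $\mc{C}$ for $\Spec\Gamma(\P^1,\Sym(\eta'))$ collides with the paper's $\mc{C}$ from \S~\ref{cspace}; pick another letter to avoid confusion.)
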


\begin{proof}
Let $\eta$ be as in Lemma~\ref{lem-b1}.  The $\mc{O}$ summand of $\eta$ comes from the scalar matrices in $\mf{b}$.
Let $\eta'$ be the complement of this, so that $\eta=\eta' \oplus \mc{O}$.  Put $A=\Gamma(\P^1, \Sym(\eta'))$.  Then
$\wt{R}^{\circ}=R^{\circ}$ is the polynomial ring in one variable over $A$, and $\mc{B}^{\circ}=\Spec(A)
\times \A^1$, with $\{0 \} \times \A^1$ identified with $\mc{B}^{\circ}_0$.  Proposition~\ref{geo-1}
applies to $\eta'$.  We find that the map from the total space of $\eta'$ to $\Spec(A)$ is an isomorphism away from 0,
that $A$ is Cohen--Macaulay and that the local ring of $A$ at the point 0 is not Gorenstein.  The lemma follows.
\end{proof}

\begin{lemma}
The map $\wt{\mc{B}} \to \mc{B}$ is birational.
\end{lemma}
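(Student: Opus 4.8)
The plan is to show that $\wt{\mc{B}} \to \mc{B}$ restricts to an isomorphism between dense open subschemes. First I would record that both the source and the target are integral: $\wt{\mc{B}}$ is geometrically integral of dimension $r+3$ by construction, and since $R \cong \wt{R}$ by Lemma~\ref{lem-b3} and $\wt{R}$ is the coordinate ring of the integral scheme $\wt{\mc{B}}$ (hence a domain), the scheme $\mc{B} = \Spec(R)$ is integral as well. Moreover, the map $R \to \wt{R}$ being an isomorphism shows that $\wt{\mc{B}} \to \mc{B}$ is dominant.

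Next I would pin down the ``bad locus''. Set $\wt{\mc{B}}_0 = \wt{\mc{B}} \cap \wt{\mc{B}}^{\circ}_0$ and $\mc{B}_0 = \mc{B} \cap \mc{B}^{\circ}_0$, the closed subschemes where all the $m_i$ vanish and $\phi$ is scalar. On $\wt{\mc{B}}_0$ the Borel $\mf{b}$ varies freely over $\P^1$ and, because $\det(\phi)=1$ and $\mathrm{char}(k)\ne 2$, one has $\phi = \pm 1$; thus $\dim \wt{\mc{B}}_0 = 1 < r+3$, so $\wt{\mc{B}} \setminus \wt{\mc{B}}_0$ is a nonempty dense open subscheme of $\wt{\mc{B}}$. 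I would then invoke Lemma~\ref{lem-b2}: the isomorphism $\wt{\mc{B}}^{\circ} \setminus \wt{\mc{B}}^{\circ}_0 \to \mc{B}^{\circ} \setminus \mc{B}^{\circ}_0$ there preserves the coordinate $\phi$, hence the function $\det(\phi)$, so it carries the closed condition $\det(\phi)=1$ on the left to the same condition on the right. Since $\wt{\mc{B}}$ is cut out of $\wt{\mc{B}}^{\circ}$ by $\det(\phi)=1$ and $\mc{B}$ is cut out of $\mc{B}^{\circ}$ by the same equation, restricting the isomorphism gives an isomorphism $\wt{\mc{B}} \setminus \wt{\mc{B}}_0 \to \mc{B} \setminus \mc{B}_0$.

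Finally I would conclude: $\mc{B} \setminus \mc{B}_0$ is the image of the nonempty scheme $\wt{\mc{B}} \setminus \wt{\mc{B}}_0$, so it is a nonempty open subscheme of the irreducible scheme $\mc{B}$ and therefore dense. Hence $\wt{\mc{B}} \to \mc{B}$ is an isomorphism over a pair of dense opens, which is precisely the assertion that it is birational. (Equivalently: it is a dominant morphism of integral schemes whose fiber over the generic point of $\mc{B}$ is a single reduced point, again by Lemma~\ref{lem-b2}.)

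I do not anticipate a serious obstacle. The only things requiring a moment's care are the estimate $\dim \wt{\mc{B}}_0 < \dim \wt{\mc{B}}$ and the compatibility of the isomorphism of Lemma~\ref{lem-b2} with imposing $\det(\phi)=1$; both are routine bookkeeping, and everything rests on Lemmas~\ref{lem-b2} and~\ref{lem-b3}, which are already established. If anything costs a little thought, it is simply tracking which closed subschemes correspond to one another under the isomorphism of Lemma~\ref{lem-b2}.
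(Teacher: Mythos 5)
Your proof is correct and takes essentially the same route as the paper: both invoke Lemma~\ref{lem-b2} to obtain an isomorphism away from $\mc{B}^{\circ}_0$, and both then observe that $\mc{B}^{\circ}_0 \cap \mc{B}$ is a proper closed subset of $\mc{B}$ (the paper notes it is just two closed points, matching your dimension count via $\phi=\pm1$). The paper's proof is simply a one-line compression of the bookkeeping you spell out.
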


\begin{proof}
This follows immediately from the previous lemma and the fact that $\mc{B}^{\circ}_0 \cap \mc{B}$ is a proper closed
set in $\mc{B}$ (in fact, it consists of two closed points).
\end{proof}

We now prove the theorem.

\begin{proof}[Proof of Theorem~\ref{thm-b}]
Since $\wt{\mc{B}}$ is normal and geometrically integral, so is $R=\wt{R}$.  As $\wt{\mc{B}} \to \mc{B}$ is birational,
$R$ has dimension $r+3$.
Since $R$ is the quotient of the Cohen--Macaulay ring $R^{\circ}$ by the non-zerodivisor $\det(\phi)-1$, it is
Cohen--Macaulay \cite[Thm.~2.1.3a]{BrunsHerzog}.  Furthermore, since $R^{\circ}$ is not Gorenstein at the point $b$, and
$\det(\phi)-1$ belongs to the maximal ideal at $b$, we see that $R$ is not Gorenstein at $b$
\cite[Prop.~3.1.19b]{BrunsHerzog}.
\end{proof}

\subsection{The space $\mc{C}$}
\label{cspace}

Let $\mc{C}=\mc{C}_r$ be the functor which assigns to a $k$-algebra $T$ the set $\mc{C}(T)$ of tuples
$(\phi, \alpha; m_1, \ldots, m_{r+1})$ where $\phi$ is an element of $M_2(T)$ of determinant 1, $\alpha$ is
an element of $T$ satisfying the characteristic polynomial of $\phi$ and the $m_i$ are strongly nilpotent matrices
in $M_2(T)$ such that the following equations hold:
\begin{displaymath}
m_i m_j=0, \qquad m_i \phi=\alpha m_i, \qquad m_{r+1} \phi=\phi m_{r+1}.
\end{displaymath}
The first two equations are for $1 \le i, j \le r+1$.  The second equation is equivalent to $\phi m_i=\alpha^{-1} m_i$,
as can be seen by taking the adjugate of each side.  In particular, the final equation is equivalent to
$(\alpha^2-1)m_{r+1}=0$.  We let $c \in \mc{C}(k)$ denote the point
$(1, 1; 0, \ldots, 0)$.  The main result of this section is the following theorem:

\begin{theorem}
\label{thm-c}
The functor $\mc{C}$ is (represented by) a geometrically reduced affine scheme which is equidimensional of dimension
$r+3$.  It has three irreducible components:  two isomorphic to $\mc{A}_{r+2}$ (defined by the equations
$\alpha=\pm 1$) and one isomorphic to $\mc{B}_r$ (defined by the equation $m_{r+1}=0$).
\end{theorem}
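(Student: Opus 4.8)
The plan is to analyze the scheme $\mc{C}=\Spec(R_{\mc{C}})$ by first writing down the three closed subschemes in the statement as explicit vanishing loci, showing they are reduced with the stated structure, and then showing their union (as a closed set, equipped with the reduced structure) is all of $\mc{C}$ and that $R_{\mc{C}}$ already agrees with this reduced union. Concretely, set $R_{\mc{C}}=\Spec$ of the polynomial ring in the coordinates $a_i,b_i,c_i$ ($1\le i\le r+1$), $\phi_1,\phi_2,\phi_3,\phi_4$, $\alpha$ modulo the displayed equations: the equations \eqref{a-eq} for all $1\le i,j\le r+1$, the equations \eqref{b-eq-1} for $1\le i\le r+1$, the characteristic-polynomial equation \eqref{b-eq-2}, the determinant equation \eqref{b-eq-3}, and the extra relation $m_{r+1}\phi=\phi m_{r+1}$ (equivalently $(\alpha^2-1)m_{r+1}=0$, using that $\phi$ has determinant $1$). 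First I would verify the three candidate components really are subschemes: setting $\alpha=1$ forces, via \eqref{b-eq-1}, each $m_i$ to commute with $\phi$ up to the scalar already recorded, and one checks the residual equations on $a_i,b_i,c_i,\phi_j$ cut out exactly a copy of $\mc{A}_{r+2}$ — the $r+1$ matrices $m_i$ together with the strongly nilpotent part of $\phi-\alpha$ give $r+2$ pairwise-annihilating strongly nilpotent matrices (here one uses that $\phi$ of determinant $1$ with $\alpha=1$ a root of its characteristic polynomial means $\phi-1$ is strongly nilpotent, and $m_i(\phi-1)=0$, $(\phi-1)m_i=0$). Symmetrically $\alpha=-1$ gives the second copy of $\mc{A}_{r+2}$ (after $\phi\mapsto -\phi$), and $m_{r+1}=0$ gives exactly the equations defining $\mc{B}_r$ in the $m_1,\dots,m_r,\phi,\alpha$ variables.

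The core geometric input is the set-theoretic identity $\mc{C}(\ol{k})=\mc{A}_{r+2}(\ol{k})_{\alpha=1}\cup\mc{A}_{r+2}(\ol{k})_{\alpha=-1}\cup\mc{B}_r(\ol{k})$. To prove it, take a $\ol{k}$-point $(\phi,\alpha;m_1,\dots,m_{r+1})$ of $\mc{C}$. From $(\alpha^2-1)m_{r+1}=0$, either $m_{r+1}=0$, landing us in $\mc{B}_r$, or $\alpha=\pm 1$; and when $\alpha=\pm 1$ the point lies in the corresponding $\mc{A}_{r+2}$ locus by the discussion above. So as sets the three pieces exhaust $\mc{C}$. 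By Theorems~\ref{thm-a} and~\ref{thm-b} the three pieces are integral of dimensions $r+3$, $r+3$, and $r+3$ respectively, and none is contained in another (exhibit a point of each not on the other two: e.g. a generic point of the $\alpha=1$ component has $\phi$ non-central so $m_{r+1}\ne 0$ is forced by genericity of $\mc{A}_{r+2}$, and has $\alpha=1\ne -1$), so these are precisely the irreducible components of the underlying topological space and $\mc{C}$ is equidimensional of dimension $r+3$.

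It remains to show $R_{\mc{C}}$ is reduced, equivalently that $R_{\mc{C}}$ injects into the product of its three quotients $R_{\mc{A}_{r+2}}\times R_{\mc{A}_{r+2}}\times R_{\mc{B}_r}$, which are domains. Since we already know the three quotients are the components set-theoretically, it suffices to show $R_{\mc{C}}$ is reduced, and for that I would use Serre's criterion: $R_{\mc{C}}$ is reduced iff it is $(R_0)$ and $(S_1)$. The $(R_0)$ condition is immediate once we know the localizations at the three minimal primes are fields, which follows because generically along each component we are on the smooth locus of $\mc{A}_{r+2}$ or $\mc{B}_r$ (the defining equations of $\mc{C}$, localized away from the other components, reduce to those of the relevant smooth open subscheme, since e.g. away from $\{\alpha=-1,m_{r+1}=0\}$ either $\alpha-1$ or $m_{r+1}$ is invertible-up-to-nilpotents and the relation $(\alpha^2-1)m_{r+1}=0$ pins us to one stratum). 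The serious point is $(S_1)$, i.e. that $R_{\mc{C}}$ has no embedded primes. The main obstacle — and where I expect to spend the most effort — is exactly this: controlling embedded components at the singular loci where the $\mc{A}$ pieces meet the $\mc{B}$ piece (along the $\alpha=\pm1$, $m_{r+1}=0$ loci) and where the two $\mc{A}$ pieces meet ($\alpha^2=1$ with $m_{r+1}=0$). I would attack it by exhibiting a regular sequence or, more cleanly, by a direct Gröbner-basis / flat-degeneration argument: degenerate the relation $(\alpha^2-1)m_{r+1}=0$ and the Borel relations to a monomial or simpler binomial ideal whose primary decomposition is visibly the union of the three (degenerated) components with no embedded primes, then lift via upper-semicontinuity of depth (or of the Hilbert function) to conclude $R_{\mc{C}}$ itself is $(S_1)$; alternatively, realize $\mc{C}$ as a relative complete intersection over a well-understood base and track depths. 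Once $(R_0)+(S_1)$ is in hand, $R_{\mc{C}}$ is reduced, the three quotients are its minimal primes, and the theorem follows.
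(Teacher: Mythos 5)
Your verification that the three loci $\{\alpha=1\}$, $\{\alpha=-1\}$, $\{m_{r+1}=0\}$ are isomorphic to $\mc{A}_{r+2}$, $\mc{A}_{r+2}$, $\mc{B}_r$ respectively, and that they cover $\mc{C}$ set-theoretically (via $(\alpha^2-1)m_{r+1}=0$), is correct and agrees with the paper. The gap is in the reducedness step. You reduce the theorem to showing $R_{\mc{C}}$ has no nilpotents, and then propose to verify $(R_0)+(S_1)$, with $(S_1)$ to be extracted from a Gr\"obner degeneration and semicontinuity of depth. That degeneration is never exhibited, its primary decomposition is never computed, and the lifting step is never made precise --- so the hard part of the theorem (i.e.\ that there are no embedded primes) remains unproven in your write-up. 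Moreover this machinery is substantially heavier than necessary.

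The paper instead proves reducedness by a short, self-contained ideal computation. It observes that $\mf{p}_1=(\alpha-1)$ and $\mf{p}_2=(\alpha+1)$ are principal prime ideals and $\mf{p}_3=(a_{r+1},b_{r+1},c_{r+1})$ is prime, with no containments among the three (as each quotient is a domain of dimension $r+3$). The key tool is the elementary lemma that if $\mf{a}$ is a principal ideal not contained in a prime $\mf{p}$, then $\mf{a}\cap\mf{p}=\mf{a}\mf{p}$. Applying it once gives $\mf{p}_1\cap\mf{p}_2=\mf{p}_1\mf{p}_2=(\alpha^2-1)$, again principal, and applying it a second time gives $\mf{p}_1\cap\mf{p}_2\cap\mf{p}_3=\mf{p}_1\mf{p}_2\mf{p}_3$. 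But $\mf{p}_1\mf{p}_2\mf{p}_3$ is generated by $(\alpha^2-1)a_{r+1}$, $(\alpha^2-1)b_{r+1}$, $(\alpha^2-1)c_{r+1}$, and all of these vanish in $R_{\mc{C}}$ because $(\alpha^2-1)m_{r+1}=0$. Hence $\mf{p}_1\cap\mf{p}_2\cap\mf{p}_3=0$, so $R_{\mc{C}}$ injects into the product of three domains and is reduced with exactly these three minimal primes, and equidimensionality follows. You should look for an argument at this level of directness rather than reaching for Serre's criterion: the crucial observation you are missing is that two of the three components are cut out by principal prime ideals, which makes the intersection computable by hand.
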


We require the following simple lemma.

\begin{lemma}
Let $R$ be a ring, let $\mf{p}$ be a prime ideal of $R$ and let $\mf{a}$ be a principal ideal of $R$ not contained in
$\mf{p}$.  Then $\mf{a} \cap \mf{p}=\mf{a} \mf{p}$.
\end{lemma}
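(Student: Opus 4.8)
The plan is to fix a generator and prove the two inclusions separately; the statement is elementary enough that no serious machinery is needed. Write $\mf{a}=(f)$ for a single element $f\in R$. First I would observe that, since $\mf{a}$ is generated by $f$, the product ideal $\mf{a}\mf{p}$ is simply $f\mf{p}=\{fx : x\in\mf{p}\}$ (this set is already an ideal because $\mf{p}$ is); this identification is the only ``unwinding of definitions'' step.

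The inclusion $\mf{a}\mf{p}\subseteq\mf{a}\cap\mf{p}$ is then immediate: an element $fx$ with $x\in\mf{p}$ lies in $(f)=\mf{a}$ by construction, and lies in $\mf{p}$ because $x\in\mf{p}$ and $\mf{p}$ is an ideal.

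For the reverse inclusion, take $y\in\mf{a}\cap\mf{p}$ and write $y=fx$ with $x\in R$. Since $y\in\mf{p}$ and $\mf{p}$ is prime, either $f\in\mf{p}$ or $x\in\mf{p}$. The hypothesis that $\mf{a}=(f)$ is \emph{not} contained in $\mf{p}$ forces $f\notin\mf{p}$, so $x\in\mf{p}$ and hence $y=fx\in f\mf{p}=\mf{a}\mf{p}$.

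There is no real obstacle here; the only two points to flag are that primality of $\mf{p}$ is used exactly to factor $y=fx$ into the correct prime, and the hypothesis $\mf{a}\not\subseteq\mf{p}$ is used exactly to rule out the unwanted case $f\in\mf{p}$. Both principality of $\mf{a}$ and primality of $\mf{p}$ are genuinely needed, so I would not attempt to weaken the hypotheses.
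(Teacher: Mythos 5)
Your proof is correct and follows essentially the same argument as the paper: fix a generator of $\mf{a}$, note the easy inclusion $\mf{a}\mf{p}\subseteq\mf{a}\cap\mf{p}$, and for the reverse inclusion factor an element of $\mf{a}\cap\mf{p}$ as $f\cdot x$ and use primality of $\mf{p}$ together with $\mf{a}\not\subseteq\mf{p}$ to conclude $x\in\mf{p}$. Nothing to add.
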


\begin{proof}
Clearly, $\mf{a} \mf{p}$ is contained in $\mf{a} \cap \mf{p}$.  We now establish the reverse inclusion.  Let
$\mf{a}=(a)$ and let $x$ be an element of $\mf{a} \cap \mf{p}$.  We can then write $x=ay$ for some $y \in R$.  Since
$ay$ belongs to $\mf{p}$ but $a$ does not belong to $\mf{p}$, we conclude that $y$ belongs to $\mf{p}$.  Thus $x$
belongs to $\mf{a} \mf{p}$.
\end{proof}

We now prove the theorem.

\begin{proof}[Proof of Theorem~\ref{thm-c}]
It is clear that $\mc{C}$ is represented by an affine scheme $\Spec(R)$; we do not write the equations, but keep our
previous notation for elements of the ring $R$.  The locus $\alpha=\pm 1$ in
$\mc{C}$ is isomorphic to $\mc{A}_{r+2}$, the isomorphism taking a $T$-point $(\phi, \alpha; m_1, \ldots, m_{r+1})$ to
$(\phi-\alpha, m_1, \ldots, m_{r+1})$.  The locus $m_{r+1}=0$ in $\mc{C}$ is isomorphic to $\mc{B}_r$ (obviously).  It
follows that $\mf{p}_1=(\alpha-1)$, $\mf{p}_2=(\alpha+1)$ and $\mf{p}_3=(a_{i+1}, b_{i+1}, c_{i+1})$ are prime ideals
of $R$.  We claim that their intersection is the zero ideal; this will prove the theorem.  Since the rings
$R/\mf{p}_i$ all have the same dimension, there is no containment between the primes $\mf{p}_i$.  Since
$\mf{p}_1$ and $\mf{p}_2$ are both prime principal ideals, the lemma gives $\mf{p}_1 \cap \mf{p}_2=\mf{p}_1 \mf{p}_2$,
which is again a principal ideal.  A second application of the lemma gives $\mf{p}_1 \cap \mf{p}_2 \cap \mf{p}_3=
\mf{p}_1 \mf{p}_2 \mf{p}_3$.  However, since $(\alpha^2-1)m_{r+1}=0$, the product of the $\mf{p}_i$ is zero.  This
completes the proof.
\end{proof}

\begin{remark}
The point $c$ belongs to each of the three irreducible components of $\mc{C}$, and corresponds to the points $a$
and $b$ when these components are identified with $\mc{A}$ and $\mc{B}$.
\end{remark}

\section{Local deformation rings}
\label{s:local}

In this section we study certain ordinary deformation rings of local Galois groups.

\subsection{Set-up}

Let $F$ be a finite extension of $\Q_p$ of degree $d$ with absolute Galois group $G_F$.  Let $E$ be a finite extension
of $\Q_p$ with ring of integers $\mc{O}$, uniformizer $\pi$ and residue field $k$.  Let $V_0$ be a two dimensional
$k$-vector space
equipped with a continuous action of $G_F$ having cyclotomic determinant.  Let $\ms{C}_{\mc{O}}$ denote the category of
artinnian local $\mc{O}$-algebras with residue field $k$.  Define a functor\footnote{The appropriate 2-categorical
notions are to be understood throughout.} $D$ on $\ms{C}_{\mc{O}}$ by assigning to an algebra $A$ the groupoid of all
deformations of $V_0$ to $A$ with cyclotomic determinant.
That is, $D(A)$ is the category whose objects are free rank two $A$-modules $V$ equipped with a continuous action of
$G_F$ (for the discrete topology) having cyclotomic determinant together with an isomorphism $V \otimes_A k \to V_0$;
the morphisms in $D(A)$ are isomorphisms.

The category $D(A)$ is typically not discrete, and so $D$ will typically not be representable.  To circumvent this
annoyance,
we use framed deformations.  For $A$ as above, define $D^{\Box}(A)$ to be the category of pairs $(V, \{e_1, e_2\})$,
where $V$ is an object of $D(A)$ and $\{e_1, e_2\}$ is a basis of $V$ as an $A$-module.  Morphisms in $D^{\Box}(A)$
are required to respect the basis, and so there is at most one morphism between two objects.  The functor
$D^{\Box}(A)$ is pro-representable by a complete local noetherian $\mc{O}$-algebra $R^{\univ}$ together with a
free rank two $R^{\univ}$-module $V^{\univ}$.  The action of $G_F$ on $V^{\univ}$ is continuous when $V^{\univ}$
is given the $\mf{m}^{\univ}$-adic topology ($\mf{m}^{\univ}$ being the maximal ideal of $R^{\univ}$).

\subsection{Statement of problem}

For the purpose of this paper, we say that a continuous representation $\rho:G_F \to \GL_2(E')$, with $E'$ a finite
extension of $E$, is \emph{ordinary} if
\begin{displaymath}
\rho \vert_{I_F} \cong \mat{\chi}{\ast}{}{1}.
\end{displaymath}
Let $X$ be the locus in $\MaxSpec(R^{\univ}[1/\pi])$ consisting of ordinary representations.  Kisin's method,
discussed briefly below, shows
that $X$ is Zariski closed, and so there is a unique $\mc{O}$-flat reduced quotient $R$ of $R^{\univ}$ such that
$\MaxSpec(R[1/\pi])=X$.  This method also allows one to compute the components of $R[1/\pi]$ and show that each of
them is formally smooth over $E$.  However, as discussed in the introduction, the method does not provide much
information about $R$ itself.  Our goal is to understand this ring as best we can.

A natural way to proceed is to try to understand the deformation problem $R$ represents.  A good first guess is
that giving a map $R \to A$ is the same as giving a deformation $V$ of $V_0$ to $A$ which is in some sense
``ordinary.''  To this end, let us call such a $V$ \emph{ordinary} if there exists a rank one $A$-module summand $L$
of $V$ on which $G_F$ acts through the cyclotomic character and such that $G_F$ acts on $V/L$ trivially.  Let
$\ol{D}^{\ord}$ denote the functor assigning to $A$ the groupoid of ordinary deformations.

If $V_0$ is non-trivial then the functor $\ol{D}^{\ord, \Box}$ is represented by $R$.  This gives a moduli-theoretic
description of $R$, and allows $R$ to be studied relatively easily.  For instance, one can show
that the irreducible components of $\Spec(R)$ are formally smooth over $\mc{O}$ --- in fact, this comes out
of Kisin's analysis.

If $V_0$ is trivial then the functor $\ol{D}^{\ord, \Box}$ is not representable.  The problem is that, in this case,
a line $L$ in $V$ as above need not be unique.  The main idea of Kisin's method is to study the functor $D^{\ord}$
parameterizing pairs $(V, L)$, with $L$ as above.  The framed version of this functor is representable, but not by $R$;
rather, $\Spec(R)$ is the
scheme-theoretic image of the representing object in $\Spec(R^{\univ})$.  It is hard to deduce properites of $R$
from this description.  In the rest of \S \ref{s:local}, we will follow the plan outlined in the introduction to
obtain the equations cutting $R$ out from $R^{\univ}$, and then use this to analyze $R$.  (Actually, we work with a
slightly modified ring $\wt{R}$ defined below.)

{\it We assume for the rest of \S \ref{s:local} that $V_0$ is trivial.}

\subsection{Three sets of Galois representations}

Let $x$ be a point in $\MaxSpec(R^{\univ}[1/\pi])$ with residue field $E_x$.  Then $E_x$ is a finite extension of $E$,
and the action of $G_F$ on $V_x=V^{\univ} \otimes_{R^{\univ}} E_x$ is continuous when $V_x$ is given its
$p$-adic topology.

As defined above, we let $X \subset \MaxSpec(R^{\univ}[1/\pi])$ denote the set of points $x$ for which $V_x$ is
ordinary.  We now define three subsets of $X$.  We let $X_1$ be the subset consisting of points $x$ where $V_x$
is an extension of $E_x$ by $E_x(\chi)$ on the full Galois group.  Similarly, we let
$X_2$ denote the subset consisting of points $x$ where $V_x$ is an extension of $E_x(\eta)$ by
$E_x(\eta \chi)$ on the full Galois group, where $\eta$ is the unramified quadratic character.  Finally, we let
$X_3$ denote the subset of $X$ where the representation is crystalline.  We have the following basic result.

\begin{lemma}
The set $X$ is the union of the subsets $X_1$, $X_2$ and $X_3$.
\end{lemma}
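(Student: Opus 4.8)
The plan is to show that any ordinary $V_x$ falls into one of the three cases according to how the ordinary filtration behaves on the \emph{full} Galois group $G_F$, rather than just on inertia $I_F$. By definition of ordinary, after restricting to $I_F$ there is a line $L \subset V_x\vert_{I_F}$ on which $I_F$ acts by $\chi$ with trivial quotient; the first step is to understand how much of this extends to $G_F$. Since $\det = \chi$ on $G_F$ and the Hodge--Tate--Sen weights are $0,1$, the representation $V_x$ is Hodge--Tate, hence (by Sen's theory, or directly) after a finite unramified base change the $\chi$-isotypic part of inertia spans a $G_F$-stable line $L$; more precisely, $\mathrm{Hom}_{I_F}(E_x(\chi), V_x)$ is at most one-dimensional and $G_F/I_F$-stable, so $\mathrm{Frobenius}$ acts on it by some scalar $\beta \in E_x^\times$. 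The case division comes from the value of $\beta$.

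First I would treat the case where this $\chi$-line $L$ is already defined over $E_x$ and $G_F$-stable: then $V_x$ is an extension $0 \to E_x(\psi\chi) \to V_x \to E_x(\psi) \to 0$ where $\psi$ is the unramified character giving the Frobenius action on $V_x/L$ (here I am using $\det V_x = \chi$ to force the two unramified twists to be inverse to each other, i.e. the sub is $E_x(\psi\chi)$ and the quotient $E_x(\psi^{-1})$ with $\psi\chi \cdot \psi^{-1} = \chi$; so really $\psi = \psi$, $\psi^{-1}$). If $\psi$ is trivial we land in $X_1$. If $\psi$ is the unramified quadratic character $\eta$ we land in $X_2$. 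The remaining subcase is when $\psi$ is some other unramified character, or when the $\chi$-line is only defined over a quadratic unramified extension — and the point is that in all these ``generic unramified twist'' situations the representation is crystalline, hence in $X_3$: an ordinary representation with Hodge--Tate weights $0,1$ that is an unramified twist of something, with the right determinant, is a twist of a Tate curve-type or a good-reduction situation, and one checks crystallinity directly from the fact that the extension class lies in $H^1_f$ (the extension of unramified by $E_x(\chi)$ with the correct Frobenius eigenvalues is automatically crystalline by a standard computation with $H^1_f(G_F, E_x(\chi)\psi^2)$, which equals all of $H^1$ unless $\psi^2$ is trivial, i.e. unless $\psi \in \{1,\eta\}$, which are exactly the excluded cases $X_1, X_2$).

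So the clean way to organize the argument: let $L$ be the $G_F$-stable line (over $\ol{E}_x$) through the $\chi$-part of inertia, let $\psi$ be the unramified character on $V_x/L$. By the determinant condition $L = E_x(\psi^{-1}\chi)$ after the unramified base change trivializing $\psi$. Then split into $\psi^2 = 1$ versus $\psi^2 \ne 1$. When $\psi^2 \ne 1$, the local Euler characteristic / $H^1_f$ computation shows $H^1(G_F, \ol{E}_x(\psi^{-2}\chi)) = H^1_f(G_F, \ol{E}_x(\psi^{-2}\chi))$, so the extension is crystalline and $x \in X_3$ (and one must also note crystallinity descends through the unramified twist and base change, which is standard). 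When $\psi^2 = 1$, either $\psi = 1$, giving $X_1$ directly, or $\psi = \eta$ is the unramified quadratic character, giving $X_2$; in the $\psi = \eta$ case the line $L$ and quotient are genuinely defined over $E_x$ once $E_x$ contains the relevant quadratic data, which one arranges. Conversely each $X_i$ is obviously contained in $X$, so $X = X_1 \cup X_2 \cup X_3$.

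The main obstacle I expect is the bookkeeping around base change and rationality: the ordinary line from inertia is a priori only defined over $\ol{E}_x$, and Frobenius may act on it by a scalar $\beta$ that is not a square in $E_x$, so pinning down whether $V_x$ is \emph{literally} an extension as in the definitions of $X_1, X_2$ (over $E_x$, not $\ol{E}_x$) versus merely becoming one after base change requires care — this is exactly why $X_2$ singles out the unramified \emph{quadratic} twist $\eta$ and why everything else collapses into the crystalline locus $X_3$. The other technical point is justifying ``unramified-twisted extension of trivial by $\chi$ with $\psi^2 \ne 1$ is crystalline,'' which is the $H^1 = H^1_f$ computation; this is routine local Galois cohomology (Tate local duality plus the explicit $H^1_f$ dimension) but needs the hypothesis that $F$ contains the $p$-th roots of unity, i.e. $\chi \equiv 1 \pmod p$, to be handled in the borderline subcases.
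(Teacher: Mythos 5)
Your argument follows the paper's essentially step for step: write $V_x$ as an extension of $E_x(\psi^{-1})$ by $E_x(\psi\chi)$ with $\psi$ unramified, place the extension class in $H^1(G_F, E_x(\chi\psi^2))$, and split on $\psi^2 = 1$ (landing in $X_1$ or $X_2$) versus $\psi^2 \neq 1$ (forcing the class to be crystalline, hence $x \in X_3$). The one genuine difference is the mechanism for the $\psi^2 \neq 1$ case: you invoke a Bloch--Kato dimension count to get $H^1 = H^1_f$, whereas the paper uses the explicit Kummer-theoretic identification $H^1(G_F, E_x(\chi\psi^2)) \cong (((F^{\un})^\times)^\wedge \otimes E_x(\psi^2))^{\Gal(F^{\un}/F)}$ together with the valuation map, whose target $E_x(\psi^2)^{\Gal(F^{\un}/F)}$ vanishes exactly when $\psi^2 \neq 1$. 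These are the same observation in different clothing and either is fine. However, the difficulty you flag at the end is illusory: the $\chi$-line $L$ is already $E_x$-rational with no base change needed, since it is the $\chi$-isotypic piece $\Hom_{I_F}(E_x(\chi), V_x)$ and $\chi$ is $E_x$-valued; Frobenius acts on $L$ and on $V_x/L$ by honest scalars in $E_x^\times$, and no square roots or quadratic extensions enter. Likewise, no hypothesis about $\zeta_p \in F$ is needed for the $H^1 = H^1_f$ computation; it is uniform. Cutting those worries, and fixing the stray sign in $\psi^{\pm 2}$, would streamline the write-up considerably.
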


\begin{proof}
Let $x \in X$.  Then $V_x \vert_{I_F}$ is an extension of $E_x$ by $E_x(\chi)$, and so $V_x$ is an extension of
$E_x(\psi^{-1})$ by $E_x(\psi \chi)$ for some unramified character $\psi$.  Thus $V_x$ defines an element of
$H^1(G_F, E_x(\chi \psi^2))$.
Now, we have isomorphisms
\begin{displaymath}
H^1(G_F, E_x(\chi \psi^2)) \cong H^1(I_F, E_x(\chi \psi^2))^{\Gal(F^{\un}/F)} = (((F^{\un})^{\times})^{\wedge} \otimes
E_x(\psi^2))^{\Gal(F^{\un}/F)}.
\end{displaymath}
The first isomorphism is restriction, the second comes from Kummer theory.  Here the $\wedge$ denotes $p$-adic
completion.  The valuation map on $(F^{\un})^{\times}$ defines a map
\begin{displaymath}
H^1(G_F, E_x(\chi \psi^2)) \to E_x(\psi^2)^{\Gal(F^{\un}/F)}.
\end{displaymath}
We thus see that either $\psi^2$ is trivial, in which case $x$ belongs to $X_1$ or $X_2$, or else the above
map is zero, in which case $x$ belongs to $X_3$.
\end{proof}

\subsection{The ring $R$}

The following result is due to Kisin.

\begin{proposition}[Kisin]
\label{local-1}
Let $\ast \in \{1, 2, 3\}$.  Then there exists a unique reduced $\mc{O}$-flat quotient $R_{\ast}$ of $R^{\univ}$
such that $\MaxSpec(R_{\ast}[1/\pi])$ is equal to $X_{\ast}$.  The ring $R_{\ast}$ is a domain which is
equidimensional of dimension $d+4$.  The ring $R_{\ast}[1/\pi]$ is regular.
\end{proposition}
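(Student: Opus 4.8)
The plan is to realise each $R_\ast$ as the scheme-theoretic image of a suitable proper $R^{\univ}$-scheme, following Kisin's method, and then to extract the asserted properties from that scheme together with standard commutative algebra.

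For $\ast\in\{1,2\}$, consider the functor $D_\ast$ on $\ms{C}_{\mc{O}}$ sending $A$ to the groupoid of pairs $(V,\mc{L})$, where $V\in D(A)$ and $\mc{L}$ is a $G_F$-stable rank-one $A$-module direct summand of $V$ on which $G_F$ acts through $\chi$ (for $\ast=1$) or through $\eta\chi$ (for $\ast=2$), with $G_F$ acting through the trivial character (for $\ast=1$) or through $\eta$ (for $\ast=2$) on $V/\mc{L}$. For a fixed $V$ the admissible lines $\mc{L}$ form a closed subscheme of $\P(V)\cong\P^1_A$, so there is an $R^{\univ}$-scheme $Z_\ast$, projective over $\Spec(R^{\univ})$ via a morphism $\Theta_\ast$, whose completion along the special fibre of $\Theta_\ast$ pro-represents the framed variant $D_\ast^{\Box}$. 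The deformation theory of $D_\ast^{\Box}$ is controlled by the framing variables together with $H^1$ and $H^2$ of a rank-one subquotient of the adjoint representation, namely $\uHom(V_0/\mc{L}_0,\mc{L}_0)$; the local Euler characteristic formula and Tate local duality then show that $Z_\ast[1/\pi]$ is regular of dimension $d+3$ --- this is the part of Kisin's analysis already invoked above. Moreover $Z_\ast$ is $\mc{O}$-flat with integral fibres over $\Spec(\mc{O})$ (the special fibre is an affine bundle over the $\P^1$ of admissible lines), hence geometrically integral of dimension $d+4$. For $\ast=3$ one applies Kisin's method in its full form, using moduli of $\varphi$-modules equipped with the ordinary filtration: there is again a projective $\Theta_3\colon Z_3\to\Spec(R^{\univ})$ with $Z_3$ $\mc{O}$-flat and integral and $Z_3[1/\pi]$ regular of dimension $d+3$, by Kisin's work on flat (crystalline, Hodge--Tate weights $0$ and $1$) deformation rings.

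Next I would check, as in Kisin's method, that $\Theta_\ast[1/\pi]$ is a closed immersion onto $X_\ast$; the key point is that on each $\ol{\Q}_p$-point the admissible line is unique, since $\chi$ (resp.\ $\eta\chi$) has infinite order and hence $\Hom_{G_F}(V_x/\mc{L}_x,\mc{L}_x)=0$. In particular $R^{\univ}[1/\pi]\to\Gamma(Z_\ast[1/\pi],\mc{O})$ is surjective. Now define $R_\ast=\im(R^{\univ}\to\Gamma(Z_\ast,\mc{O}_{Z_\ast}))$, the scheme-theoretic image of $\Theta_\ast$. Properness makes $\Gamma(Z_\ast,\mc{O}_{Z_\ast})$ finite over $R^{\univ}$; since $Z_\ast$ is $\mc{O}$-flat and integral, $\Gamma(Z_\ast,\mc{O}_{Z_\ast})$ is an $\mc{O}$-flat domain, hence so is $R_\ast$, and being a quotient of a complete local noetherian ring $R_\ast$ is catenary and therefore equidimensional. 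Because $H^0$ commutes with the flat localization $\mc{O}\to E$, inverting $\pi$ gives $R_\ast[1/\pi]=\im(R^{\univ}[1/\pi]\to\Gamma(Z_\ast[1/\pi],\mc{O}))=\Gamma(Z_\ast[1/\pi],\mc{O})$, so $R_\ast[1/\pi]$ is regular with $\MaxSpec(R_\ast[1/\pi])=X_\ast$, and $\dim R_\ast=\dim R_\ast[1/\pi]+1=d+4$ by $\mc{O}$-flatness. For uniqueness, if $R'$ is any $\mc{O}$-flat reduced quotient of $R^{\univ}$ with $\MaxSpec(R'[1/\pi])=X_\ast$, then $R'[1/\pi]$ is the unique reduced quotient of the Jacobson ring $R^{\univ}[1/\pi]$ supported on $X_\ast$, so $R'[1/\pi]=R_\ast[1/\pi]$; as $R'\hookrightarrow R'[1/\pi]$ by $\mc{O}$-flatness, $R'$ is the image of $R^{\univ}$ in $R_\ast[1/\pi]$, which is $R_\ast$.

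The main obstacle is the input on $Z_\ast$: for $\ast=3$ it is Kisin's theorem that the relevant flat/crystalline deformation problem is represented (after completion) by an $\mc{O}$-flat, integral scheme with regular --- indeed formally smooth over $E$ --- generic fibre of the predicted dimension; for $\ast=1,2$ it is the cohomological computation showing $Z_\ast[1/\pi]$ is regular of dimension $d+3$, together with the description of the special fibre needed for integrality. Everything downstream --- that $\Theta_\ast[1/\pi]$ is a closed immersion, the passage to the scheme-theoretic image, flatness of that image, the dimension count and uniqueness --- is formal.
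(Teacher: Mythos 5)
Your proposal follows Kisin's method exactly as the paper does: construct a proper (in fact projective) morphism $\Theta_\ast\colon Z_\ast \to \Spec(R^{\univ})$ from a smooth resolution, check $\Theta_\ast[1/\pi]$ is a closed immersion onto $X_\ast$ via uniqueness of the admissible line, take $R_\ast$ to be the scheme-theoretic image, and then deduce $\mc{O}$-flatness, reducedness, the domain property, dimension, regularity of the generic fibre, and uniqueness formally. The only cosmetic difference is in establishing that $R_\ast$ is a domain: the paper invokes Kisin's Cor.~2.4.6 (formal smoothness of $Z$ over $\mc{O}$ plus connectedness of the $\P^1$-fibre over the closed point), whereas you argue integrality of $Z_\ast$ directly from a description of its special fibre; both come to the same thing. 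One small imprecision worth noting: the tangent/obstruction theory for $D_\ast^{\Box}$ is governed by the traceless Borel $\mf{b}^0$ (a rank-two subobject of $\ad^0 V_0$), not just the rank-one nilpotent radical $\uHom(V_0/\mc{L}_0,\mc{L}_0)$, though this does not affect the conclusion since the paper is likewise brief on this point.
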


We recall the relevant pieces of the argument.  For a complete proof, see \S 2.4 of Kisin's paper \cite{Kisin2}
or Conrad's unpublished notes \cite{Conrad}.  We just deal with the $\ast=3$ case; the other cases are similar.  Define
a functor $D_3$ on the category $\ms{C}_{\mc{O}}$ by assigning to an algebra $A$ the groupoid $D_3(A)$ of
pairs $(V, L)$ where $V \in D(A)$ and $L$ is a rank one $A$-module summand of $V$ on which $G_F$ acts through $\chi$
and such that the class in $H^1(G_F, L \otimes (V/L)^{\vee})$ determined by the extension
\begin{displaymath}
0 \to L \to V^{\univ}_A \to V^{\univ}_A/L \to 0
\end{displaymath}
belongs to $H^1_f(G_F, L \otimes (V^{\univ}_A/L)^{\vee})$ (see \cite[\S 2.4.1]{Kisin2} for the definition of $H^1_f$).
There is a natural
map $\Theta:D_3 \to D$ which forgets $L$.  This map is relatively representable and
projective; in fact, there is a closed immersion $D_3 \to \P^1_D$ lifting $\Theta$.  It follows that
$D_3^{\Box}$ is representable by a projective formal scheme $\wh{Z}$ over $\Spec(R^{\univ})$.
We let $Z$ be the algebraization of $\wh{Z}$, and still write $\Theta$ for the map $Z \to \Spec(R^{\univ})$.

The scheme $Z$ is formally smooth over $\mc{O}$.
The map $\Theta[1/\pi]$ is a closed immersion and induces a bijection between the closed points of $Z[1/\pi]$ and
the set $X_3 \subset \MaxSpec(R^{\univ}[1/\pi])$.  It follows that if we let $R_3$ be such that $\Spec(R_3)$ is the
scheme-theoretic image of $\Theta$, then $R_3$ is $\mc{O}$-flat and reduced and satisfies $\MaxSpec(R_3[1/\pi])=X_3$.
It is clear that $R_3$ is the unique quotient of $R^{\univ}$ with these properties.  Since $Z$ is formally smooth over
$\mc{O}$ and
$\Theta:Z[1/\pi] \to \Spec(R_3[1/\pi])$ is an isomorphism, it follows that $R_3[1/\pi]$ is formally smooth over
$E$, and thus regular.  One deduces that $R_3$ is a domain from the fact that the fiber of $\Theta$
over the closed point of $R^{\univ}$ is connected (it is $\P^1$ since $V_0$ is trivial); see \cite[Cor.~2.4.6]{Kisin}.
The dimension of $R_3$ can be calculated by looking at a tangent space.

From Proposition~\ref{local-1}, we immediately obtain the following theorem:

\begin{proposition}
There exists a unique reduced $\mc{O}$-flat quotient $R$ of $R^{\univ}$ with the property that $\MaxSpec(R[1/\pi])$
is equal to $X$.  The ring $R$ is equidimensional of dimension $d+4$ and has three minimal primes, namely the kernels
of the surjections $R \to R_{\ast}$.
\end{proposition}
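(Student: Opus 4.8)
The plan is to construct $R$ by hand from the three rings $R_1,R_2,R_3$ furnished by Proposition~\ref{local-1} and then read off the assertions. Write $I_\ast=\ker(R^{\univ}\to R_\ast)$; this is a prime ideal since $R_\ast$ is a domain. Set $I=I_1\cap I_2\cap I_3$ and $R=R^{\univ}/I$, so that the natural map $R\to R_1\times R_2\times R_3$ is injective. Each $R_\ast$ is reduced and $\mc{O}$-flat, hence $\mc{O}$-torsion-free; these properties pass to the product and to its subring $R$, and over the discrete valuation ring $\mc{O}$ torsion-free is the same as flat, so $R$ is a reduced $\mc{O}$-flat quotient of $R^{\univ}$. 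Localization commutes with finite intersections, so $I[1/\pi]=\bigcap_\ast I_\ast[1/\pi]$, and a maximal ideal of $R^{\univ}[1/\pi]$ contains $I[1/\pi]$ if and only if it contains some $I_\ast[1/\pi]$ (it is prime and $I[1/\pi]$ contains the product $I_1[1/\pi]I_2[1/\pi]I_3[1/\pi]$). Hence $\MaxSpec(R[1/\pi])=\bigcup_\ast\MaxSpec(R_\ast[1/\pi])=\bigcup_\ast X_\ast=X$, the last equality being the lemma identifying $X$ with $X_1\cup X_2\cup X_3$.

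For uniqueness I would invoke the standard principle that a reduced $\mc{O}$-flat quotient of $R^{\univ}$ is determined by its maximal spectrum after inverting $\pi$. If $A=R^{\univ}/J$ is $\mc{O}$-flat then $A$ is $\mc{O}$-torsion-free, so $A\hookrightarrow A[1/\pi]$ and $J$ is the preimage in $R^{\univ}$ of the extended ideal $J[1/\pi]$. If moreover $A$ is reduced, then $A[1/\pi]$ is reduced, so $J[1/\pi]$ is a radical ideal of the ring $R^{\univ}[1/\pi]$, which is Jacobson because $R^{\univ}$ is complete local noetherian; therefore $J[1/\pi]$ is the intersection of the maximal ideals containing it, and so both $J[1/\pi]$ and $J$ are determined by $\MaxSpec(A[1/\pi])$. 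Applying this with $\MaxSpec(A[1/\pi])=X$ shows that $A$ must equal the ring $R$ built above.

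It remains to identify the minimal primes and the dimension. A prime of $R^{\univ}$ containing $I$ contains the product $I_1I_2I_3$, hence contains some $I_\ast$, so the minimal primes of $R$ lie among the images $\mf{p}_\ast=I_\ast/I$. There are no containments among $I_1,I_2,I_3$, since a strict inclusion of primes strictly lowers the dimension of the quotient, contradicting $\dim R_1=\dim R_2=\dim R_3=d+4$ (Proposition~\ref{local-1}). Thus the minimal primes of $R$ are exactly $\mf{p}_1,\mf{p}_2,\mf{p}_3$, which are the kernels of the surjections $R\to R_\ast$, and $\dim R=\max_\ast\dim(R/\mf{p}_\ast)=d+4$. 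For equidimensionality, fix a prime $\mf{p}$ of $R$; since $R$ is reduced, $R_{\mf{p}}$ is reduced with minimal primes $(\mf{p}_j)_{\mf{p}}$ for the indices $j$ with $\mf{p}_j\subseteq\mf{p}$, so $\codim_R\mf{p}=\dim R_{\mf{p}}=\max_j\codim_{R_j}(\ol{\mf{p}})$, where $\ol{\mf{p}}$ denotes the image of $\mf{p}$ in $R_j$, while $R/\mf{p}=R_j/\ol{\mf{p}}$ for each such $j$. Choosing $j$ to realize the maximum and using that $R_j$ is equidimensional of dimension $d+4$ gives $\dim(R/\mf{p})+\codim_R\mf{p}=\dim(R_j/\ol{\mf{p}})+\codim_{R_j}(\ol{\mf{p}})=d+4$, so $R$ is equidimensional of dimension $d+4$.

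None of the steps is genuinely difficult once Proposition~\ref{local-1} is in hand; the only substantive inputs are that $R^{\univ}[1/\pi]$ is Jacobson (used for uniqueness) and the properties of the $R_\ast$ recorded in Proposition~\ref{local-1}. If there is any point that requires care, it is the bookkeeping in the equidimensionality argument, where one must make sure that dimensions and codimensions are computed on an irreducible component passing through the given prime.
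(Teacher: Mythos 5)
Your proof is correct, and it fills in precisely the details that the paper's terse "From Proposition~\ref{local-1}, we immediately obtain" leaves to the reader: set $I=I_1\cap I_2\cap I_3$, embed $R=R^{\univ}/I$ in $\prod R_\ast$ to get reducedness and $\mc{O}$-flatness, compute $\MaxSpec(R[1/\pi])$ via $I[1/\pi]\supset I_1[1/\pi]I_2[1/\pi]I_3[1/\pi]$, and use Jacobsonness of $R^{\univ}[1/\pi]$ (a nontrivial but standard fact for complete local noetherian $\mc{O}$-algebras, central to Kisin's method) for uniqueness. The localization computation for equidimensionality is carried out carefully and is valid because each $R_j$ is equidimensional by Proposition~\ref{local-1}.

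One small point deserves attention. In the step identifying the minimal primes, you write that there are "no containments among $I_1, I_2, I_3$," but the dimension argument you give only rules out \emph{strict} containments. To conclude that $R$ has \emph{three} (rather than at most three) minimal primes, you also need to know that $I_1, I_2, I_3$ are pairwise distinct. This is equivalent to $X_1, X_2, X_3$ being pairwise distinct as subsets of $\MaxSpec(R^{\univ}[1/\pi])$, which indeed holds: $X_1$ and $X_2$ are even disjoint since a point of both would force the unramified quadratic character $\eta$ to lie in $\{1,\chi\}$, which is impossible as $\eta$ is nontrivial unramified while $\chi$ is ramified; and $X_1\ne X_3$ (similarly $X_2\ne X_3$) because $X_3$ contains points with Frobenius eigenvalue $\psi\notin\{1,\eta\}$ on the quotient line while $X_1$ does not, and $X_1$ contains non-crystalline extension classes not lying in $H^1_f$. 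This is easily supplied but should be said; everything else in your argument is sound.
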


\subsection{The ring $\wt{R}$}
\label{ss:rtilde}

Fix a Frobenius element $\phi$ of $G_F$.  We assume $\chi(\phi)=1$ for convenience.  Let $\wt{D}$ be the functor
on $\ms{C}_{\mc{O}}$ assigning to $A$ the set of
pairs $(V, \alpha)$, where $V$ belongs to $D(A)$ and $\alpha \in A$ is a root of the characteristic polynomial of
$\phi$ on $V$.  The framed version $\wt{D}^{\Box}=\wt{D} \times_D D^{\Box}$ is pro-representable by a complete
local noetherian $\mc{O}$-algebra $\wt{R}^{\univ}$.  The ring $\wt{R}^{\univ}$ is the quotient of $R^{\univ}[\alpha]$
by a monic degree two polynomial (the characteristic polynomial of $\phi$ on $V^{\univ}$).

We now define a map
\begin{displaymath}
\Phi:X \to \MaxSpec(\wt{R}^{\univ}[1/\pi]).
\end{displaymath}
Thus let $x$ be a point in $X$.  The space $V_x$ contains a unique line $L_x$ on which $I_F$ acts through the
cyclotomic character.  This line is stable by $G_F$ since $I_F$ is a normal subgroup.  Let $\alpha_x$ be the scalar
through which $\phi$ acts on $V_x/L_x$.  Then $\wt{x}=(V_x, \alpha_x)$
defines a point of $\MaxSpec(\wt{R}^{\univ})$.  We put $\Phi(x)=\wt{x}$.  Put $\wt{X}=\Phi(X)$ and
$\wt{X}_{\ast}=\Phi(X_{\ast})$.

The main result of this section is the following:

\begin{proposition}
Let $\ast \in \{1,2,3\}$.  Then there is a unique reduced $\mc{O}$-flat quotient $\wt{R}_{\ast}$ of $\wt{R}^{\univ}$
such that $\MaxSpec(\wt{R}_{\ast}[1/\pi])$ is equal to $\wt{X}_{\ast}$.  The ring $\wt{R}_{\ast}$ is a domain which is
equidimensional of dimension $d+4$.  The natural map $R_{\ast} \to \wt{R}_{\ast}$ is an isomorphism after
inverting $\pi$.
\end{proposition}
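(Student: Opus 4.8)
The plan is to construct $\wt{R}_{\ast}$ by hand as an explicit subring of $R_{\ast}[1/\pi]$ and then read off every stated property, using Proposition~\ref{local-1} and Kisin's construction as black boxes. First I would exploit the fact that Kisin's scheme $Z_{\ast}$ (the one whose scheme-theoretic image in $\Spec(R^{\univ})$ is $\Spec(R_{\ast})$) carries a universal framed pair $(V,L)$ with $L$ the ordinary line, and that $\Theta[1/\pi]\colon Z_{\ast}[1/\pi]\to\Spec(R_{\ast}[1/\pi])$ is an isomorphism. Transporting $(V,L)$ along this isomorphism, $\Spec(R_{\ast}[1/\pi])$ carries a universal ordinary pair; let $\alpha\in R_{\ast}[1/\pi]$ be the scalar by which $\phi$ acts on the rank-one module $V/L$. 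Since $V$ is an extension of $V/L$ by $L$ and has cyclotomic determinant with $\chi(\phi)=1$, the scalar $\beta$ by which $\phi$ acts on $L$ satisfies $\alpha\beta=1$ and $\alpha+\beta=\tr(\phi\mid V)$, so $\alpha^{2}-\tr(\phi\mid V)\,\alpha+1=0$; hence there is a ring map $\wt{R}^{\univ}\to R_{\ast}[1/\pi]$ lying over $R^{\univ}\to R_{\ast}$ and carrying the coordinate $\alpha$ to this element. I would \emph{define} $\wt{R}_{\ast}$ to be the image of this map.

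With this definition most of the assertions are formal. The ring $\wt{R}_{\ast}=R_{\ast}[\alpha]$ is finite over $R_{\ast}$ (being generated by $1,\alpha$ as a module), is a subring of the domain $R_{\ast}[1/\pi]$ — hence a domain, in particular reduced — and has no $\pi$-torsion, hence is $\mc{O}$-flat; moreover $\wt{R}_{\ast}[1/\pi]=R_{\ast}[1/\pi]$ because $\alpha$ already lies in $R_{\ast}[1/\pi]$. It is a quotient of the complete local noetherian ring $\wt{R}^{\univ}$, hence is complete local noetherian and universally catenary, so being a catenary local domain it is equidimensional; its dimension equals that of $R_{\ast}$, namely $d+4$, since it is a finite extension of domains. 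For the last assertion, the composite $R^{\univ}\to\wt{R}^{\univ}\to\wt{R}_{\ast}$ factors through $R_{\ast}$ (its kernel contains that of $R^{\univ}\to R_{\ast}$, as both compositions land in $R_{\ast}\subset R_{\ast}[1/\pi]$), giving a map $R_{\ast}\to\wt{R}_{\ast}$ which is simply the inclusion $R_{\ast}\hookrightarrow R_{\ast}[\alpha]$; after inverting $\pi$ this becomes the identity of $R_{\ast}[1/\pi]$.

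To finish the existence statement I must check $\MaxSpec(\wt{R}_{\ast}[1/\pi])=\wt{X}_{\ast}$. The closed immersion $\MaxSpec(R_{\ast}[1/\pi])=\MaxSpec(\wt{R}_{\ast}[1/\pi])\hookrightarrow\MaxSpec(\wt{R}^{\univ}[1/\pi])$ sends a point $x\in X_{\ast}$ to the point with underlying point $x$ of $\MaxSpec(R^{\univ}[1/\pi])$ and with $\alpha$-coordinate equal to the value of the function $\alpha$ at $x$; by construction that value is the scalar by which $\phi$ acts on $V_{x}/L_{x}$, which is exactly $\alpha_{x}$. Hence this immersion agrees with $\Phi$ on $X_{\ast}$ and has image $\Phi(X_{\ast})=\wt{X}_{\ast}$. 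Uniqueness follows the pattern of Proposition~\ref{local-1}: a reduced $\mc{O}$-flat quotient $B$ of $\wt{R}^{\univ}$ with $\MaxSpec(B[1/\pi])=\wt{X}_{\ast}$ embeds into $B[1/\pi]$ by $\mc{O}$-flatness, and $B[1/\pi]$ is the unique reduced quotient of the Jacobson ring $\wt{R}^{\univ}[1/\pi]$ with maximal spectrum $\wt{X}_{\ast}$, hence equals $\wt{R}_{\ast}[1/\pi]$; since $B$ is the image of $\wt{R}^{\univ}$ in $B[1/\pi]$, we conclude $B=\wt{R}_{\ast}$.

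The argument is essentially bookkeeping once Proposition~\ref{local-1} is in hand; the one step that needs genuine care — and the place I expect to be most careful — is the identification $\MaxSpec(\wt{R}_{\ast}[1/\pi])=\wt{X}_{\ast}$. This rests on the compatibility between the universal line over $\Spec(R_{\ast}[1/\pi])$ coming from $Z_{\ast}$ and the canonical line $L_{x}\subset V_{x}$ used to define $\Phi$, namely that the former specializes to the latter at each $x\in X_{\ast}$; this in turn comes down to the uniqueness of the line in $V_{x}$ on which $I_{F}$ acts through $\chi$, together with the Jacobson property of $\wt{R}^{\univ}[1/\pi]$ needed for the uniqueness clause.
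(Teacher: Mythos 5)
Your argument is correct and produces the same ring as the paper's, but the bookkeeping is organized differently. The paper lifts $\Theta$ to a map $\wt\Theta\colon Z\to\Spec(\wt R^{\univ})$ and takes $\wt R_\ast$ to be its scheme-theoretic image, extracting $\mc O$-flatness and reducedness from the formal smoothness of $Z$ and the remaining properties ($R_\ast[1/\pi]\xrightarrow{\sim}\wt R_\ast[1/\pi]$, domain, equidimensional) from a commutative triangle of maps out of $Z$. You instead transport the universal eigenvalue along $\Theta[1/\pi]$ to an explicit element $\alpha\in R_\ast[1/\pi]$ satisfying a monic quadratic over $R_\ast$, and define $\wt R_\ast=R_\ast[\alpha]\subset R_\ast[1/\pi]$. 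The two constructions agree: since $\Gamma(Z,\mc O_Z)$ is $\pi$-torsion-free it injects into $\Gamma(Z[1/\pi],\mc O)\cong R_\ast[1/\pi]$, so the image of $\wt R^{\univ}$ in $\Gamma(Z)$ and its image in $R_\ast[1/\pi]$ coincide. What your version buys is that it sidesteps a second appeal to the geometry of $Z$ --- everything reduces to the elementary algebra of the finite extension of domains $R_\ast\subset R_\ast[\alpha]\subset R_\ast[1/\pi]$ inside a universally catenary ring, which cleanly yields domain, $\pi$-torsion-free, equidimensional of dimension $d+4$, and $\wt R_\ast[1/\pi]=R_\ast[1/\pi]$ simultaneously. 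The point you correctly flag as the one needing care, the compatibility of the universal line over $Z[1/\pi]$ with the canonical line $L_x$ used to define $\Phi$, goes through because the line in $V_x$ on which $I_F$ acts by $\chi$ is unique (as $\chi|_{I_F}\ne 1$ and the determinant is $\chi$), so the two notions of $\alpha$-coordinate agree pointwise.
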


This proposition can be proved in the same manner as Proposition~\ref{local-1}.  Instead of doing this, we deduce it
from Proposition~\ref{local-1}, as this is a bit shorter.  We only treat the $\ast=3$ case, as the others are similar.

Let $Z$ be the scheme constructed in the previous section.  Let $\wt{\Theta}:Z \to \Spec(\wt{R}^{\univ})$ be the map
defined by taking $(V, L)$ to $(V, \alpha)$, where $\alpha$ is the scalar through which $\phi$ acts on $V/L$.  We
have a commutative diagrams
\begin{displaymath}
\xymatrix{
& Z \ar[ld]_{\wt{\Theta}} \ar[rd]^{\Theta} \\
\Spec(\wt{R}^{\univ}) \ar[rr] && \Spec(R^{\univ}) }
\end{displaymath}
where the bottom horizontal map is the natural one (forget $\alpha$), and
\begin{displaymath}
\xymatrix{
& Z[1/\pi]' \ar[ld]_{\wt{\Theta}} \ar[rd]^{\Theta} \\
\wt{X}_3 && X_3 \ar[ll]_{\Phi} }
\end{displaymath}
where the prime denotes the set of closed points.  Since $\Theta[1/p]$ is a closed immersion, it follows from the
first diagram that $\wt{\Theta}[1/p]$ is as well.  We thus see that $\wt{\Theta}$ is injective in the second diagram;
since we know that $\Theta$ and $\Phi$ and bijections, it follows that $\wt{\Theta}$ is as well.

Let $\wt{R}_3$ be such that $\Spec(\wt{R}_3)$ is the scheme-theoretic
image of $\wt{\Theta}$.  Since $Z$ is formally smooth over $\mc{O}$, the ring $\wt{R}_3$ is $\mc{O}$-flat
and reduced; furthermore, by the above comments, $\MaxSpec(\wt{R}_3[1/\pi])=\wt{X}_3$.  It is clear that $\wt{R}_3$ is
the
unique quotient of $\wt{R}^{\univ}$ with these properties.  Since $\Theta[1/\pi]:Z[1/\pi] \to \Spec(R_3[1/\pi])$ and
$\wt{\Theta}[1/\pi]:Z[1/\pi] \to \Spec(\wt{R}_3[1/\pi])$ are both isomorphisms, it follows from the first diagram
above that $R_3[1/\pi] \to \wt{R}_3[1/\pi]$ is an isomorphism.  We thus conclude that $\wt{R}_3$ is a domain and
equidimensional of dimension $d+4$ from the corresponding results for $R_3$.

\begin{remark}
The map $R_{\ast} \to \wt{R}_{\ast}$ is an isomorphism for $\ast \in \{1,2\}$.
\end{remark}

The above proposition immediately implies the following one.

\begin{proposition}
\label{local-2}
There is a unique reduced $\mc{O}$-flat quotient $\wt{R}$ of $\wt{R}^{\univ}$ such that $\MaxSpec(\wt{R}[1/\pi])$ is
equal to $\wt{X}$.  The ring $\wt{R}$ is equidimensional of dimension $d+4$ and has three minimal primes, the kernels
of the surjections $\wt{R} \to \wt{R}_{\ast}$.  The natural map $R \to \wt{R}$ is an isomorphism after inverting $\pi$.
\end{proposition}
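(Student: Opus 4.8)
The plan is to deduce the proposition from the previous one by gluing the $\wt{R}_{\ast}$, in exact parallel with the way the corresponding statement for $R$ is obtained from Proposition~\ref{local-1}. Since $X = X_1 \cup X_2 \cup X_3$ and $\wt{X}_{\ast} = \Phi(X_{\ast})$, $\wt{X} = \Phi(X)$, we have $\wt{X} = \wt{X}_1 \cup \wt{X}_2 \cup \wt{X}_3$. Let $\mf{q}_{\ast} = \ker(\wt{R}^{\univ} \to \wt{R}_{\ast})$, which is prime since $\wt{R}_{\ast}$ is a domain, and set $\wt{R} = \wt{R}^{\univ}/(\mf{q}_1 \cap \mf{q}_2 \cap \mf{q}_3)$. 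Then $\wt{R}$ embeds into $\wt{R}_1 \times \wt{R}_2 \times \wt{R}_3$, which is reduced and $\mc{O}$-flat, so $\wt{R}$ is reduced and $\mc{O}$-flat.

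After inverting $\pi$ one has $\wt{R}[1/\pi] = \wt{R}^{\univ}[1/\pi]/\bigcap_{\ast}(\mf{q}_{\ast}[1/\pi])$, whence $\Spec \wt{R}[1/\pi] = \bigcup_{\ast} \Spec \wt{R}_{\ast}[1/\pi]$ and therefore $\MaxSpec \wt{R}[1/\pi] = \bigcup_{\ast} \wt{X}_{\ast} = \wt{X}$. The primes $\mf{q}_{\ast}$ are pairwise distinct --- their contractions along $R^{\univ} \to \wt{R}^{\univ}$ are the kernels of $R^{\univ} \to R_{\ast}$, which are the three distinct minimal primes of $R$ --- and they all cut out integral quotients of dimension $d+4$, so there is no inclusion among them and they are precisely the minimal primes of $\wt{R}$; as $\wt{R}$ is catenary and local, it is then equidimensional of dimension $d+4$. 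For uniqueness, any reduced $\mc{O}$-flat quotient $\wt{R}'$ of $\wt{R}^{\univ}$ with $\MaxSpec \wt{R}'[1/\pi] = \wt{X}$ embeds into the reduced ring $\wt{R}'[1/\pi]$, which is forced to be the unique reduced quotient of $\wt{R}^{\univ}[1/\pi]$ with maximal spectrum $\wt{X}$; then $\wt{R}'$ is the image of $\wt{R}^{\univ}$ in that ring, so $\wt{R}' = \wt{R}$.

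It remains to produce the natural map $R \to \wt{R}$ and show it becomes an isomorphism after inverting $\pi$. The inclusion $R^{\univ} \hookrightarrow \wt{R}^{\univ}$ carries $\ker(R^{\univ} \to R_{\ast})$ into $\mf{q}_{\ast}$ --- since $R_{\ast}$ and $\wt{R}_{\ast}$ are scheme-theoretic images of the same $Z$ along compatible maps, as in the proof of the preceding proposition --- hence carries $\ker(R^{\univ} \to R)$ into $\bigcap_{\ast} \mf{q}_{\ast}$ and induces $R \to \wt{R}$. After inverting $\pi$ this map is the restriction of the isomorphism $\prod_{\ast} R_{\ast}[1/\pi] \to \prod_{\ast} \wt{R}_{\ast}[1/\pi]$ (built componentwise from the previous proposition) to the subrings $R[1/\pi] \subset \prod_{\ast} R_{\ast}[1/\pi]$ and $\wt{R}[1/\pi] \subset \prod_{\ast} \wt{R}_{\ast}[1/\pi]$, and so it is injective. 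Since $\wt{R}^{\univ}[1/\pi]$ is generated over $R^{\univ}[1/\pi]$ by $\alpha$, the ring $\wt{R}[1/\pi]$ is generated over the image of $R[1/\pi]$ by the image $\bar{\alpha}$ of $\alpha$, so everything comes down to showing $\bar{\alpha} \in R[1/\pi]$.

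I expect this last point to be the main obstacle. In characteristic zero the cyclotomic character is nontrivial on $I_F$, so over $\Spec R_{\ast}[1/\pi]$ the universal deformation contains a unique rank-one subbundle on which $I_F$ acts through $\chi$ --- it is the line that becomes the universal one under $Z[1/\pi] \cong \Spec R_{\ast}[1/\pi]$ --- and the scalar by which $\phi$ acts on the quotient line bundle is a well-defined element $\beta_{\ast} \in R_{\ast}[1/\pi]$ which corresponds to $\bar{\alpha}$ under $R_{\ast}[1/\pi] \cong \wt{R}_{\ast}[1/\pi]$. Thus, viewing $\bar{\alpha}$ as the tuple $(\beta_1,\beta_2,\beta_3) \in \prod_{\ast} R_{\ast}[1/\pi]$, its membership in $R[1/\pi]$ reduces, via the Mayer--Vietoris sequence for a covering of $\Spec R[1/\pi]$ by closed subschemes, to checking that the $\beta_{\ast}$ agree on pairwise scheme-theoretic intersections; here one may group $\Spec R_1[1/\pi]$ and $\Spec R_2[1/\pi]$, which are disjoint (a representation in $X_1$ cannot have the same semisimplification as one in $X_2$), into a single closed piece, so the two-term sequence suffices. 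On $\Spec R_1[1/\pi] \cap \Spec R_3[1/\pi]$ the quotient representation is trivial, so the two relevant $\beta$'s are both the constant $1$; on $\Spec R_2[1/\pi] \cap \Spec R_3[1/\pi]$ it is the unramified quadratic twist, so they are both the constant $-1$. Hence $\bar{\alpha} \in R[1/\pi]$, the map $R[1/\pi] \to \wt{R}[1/\pi]$ is surjective, and the proof is complete.
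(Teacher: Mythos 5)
Your overall plan is sound and, given that the paper dismisses this proposition with the single word ``immediately,'' you have correctly located the one point that requires genuine work: showing that the image $\bar{\alpha}$ of $\alpha$ lands in $R[1/\pi]$, so that the injection $R[1/\pi] \hookrightarrow \wt{R}[1/\pi]$ is onto. The construction of $\wt{R}$ as $\wt{R}^{\univ}/(\mf{q}_1\cap\mf{q}_2\cap\mf{q}_3)$, the verification that it is reduced, $\mc{O}$-flat, has the right generic fiber, three minimal primes, and is equidimensional of dimension $d+4$, and the uniqueness argument are all fine, and the reduction of the isomorphism to $\bar{\alpha}\in R[1/\pi]$ via the two-ideal Mayer--Vietoris sequence (after grouping the coprime ideals $I_1,I_2$) is the right move.

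The place that needs tightening is the final compatibility check. You assert that on $\Spec R_1[1/\pi]\cap\Spec R_3[1/\pi]$ ``the quotient representation is trivial, so the two relevant $\beta$'s are both the constant $1$.'' As stated this only checks equality on closed points, but the Mayer--Vietoris sequence requires $\alpha_3-1$ to vanish in the coordinate ring of the \emph{scheme-theoretic} intersection $R^{\univ}[1/\pi]/(I_1+I_3)$, which a priori could have nilpotents (and indeed what is immediate from $\tr\rho(\phi)-2\in I_1$ is only $(\alpha_3-1)^2\in(I_1+I_3)$). The missing ingredient is a canonical identification of the two structural lines. Over any $E$-algebra $A$ with a pair $(V,L)$ in which $I_F$ acts on $L$ by $\chi$ and trivially on $V/L$, pick $g\in I_F$ with $\chi(g)\neq 1$; then $g-\chi(g)$ kills $L$ and acts on $V/L$ by the unit $1-\chi(g)\in E^{\times}$, so $L=\ker(g-\chi(g))$ is determined by $V$ alone. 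Hence the lines coming from the $Y_1$- and the $Y_3$-structure agree over $Y_1\cap Y_3$; the quotient line bundle is the same, and since $\phi$ acts trivially on it over $Y_1$, one gets $\alpha_3\equiv 1$ in $\mc{O}(Y_1\cap Y_3)$, as a ring identity and not merely at points. (The $Y_2\cap Y_3$ case is the same with $-1$ in place of $1$. Also, for the scheme-theoretic disjointness of $Y_1$ and $Y_2$ one can note directly that $(\tr\rho(\phi)+2)-(\tr\rho(\phi)-2)=4$ is a unit in $I_1+I_2$.) With this sentence added, your argument is complete and, I believe, is exactly the content hidden behind the paper's ``immediately.''
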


\subsection{The ring $\wt{R}^{\dag}$}
\label{ss:rdag}

For $A \in \ms{C}_{\mc{O}}$, let $\wt{D}^{\dag}(A)$ denote the subset of $\wt{D}(A)$ consisting of those pairs
$(V, \alpha)$ such that the following conditions are satisfied:
\begin{itemize}
\item $\tr{g}=\chi(g)+1$ for all $g \in I_F$.
\item $(g-1)(g'-1)=(\chi(g)-1)(g'-1)$ for $g,g' \in I_F$.
\item $(g-1)(\phi-\alpha)=(\chi(g)-1)(\phi-\alpha)$ for $g \in I_F$.
\item $(\phi-\alpha)(g-1)=(\alpha^{-1}-\alpha)(g-1)$ for $g \in I_F$.
\end{itemize}
The functor $\wt{D}^{\dag, \Box}$ is clearly prorepresentable by a ring $\wt{R}^{\dag}$; to obtain $\wt{R}^{\dag}$,
simply form the quotient of $\wt{R}^{\univ}$ by the above equations.

\begin{lemma}
The natural map $\wt{R}^{\univ} \to \wt{R}$ factors through $\wt{R}^{\dag}$.
\end{lemma}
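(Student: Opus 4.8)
The plan is to verify, one at a time, that each of the four families of equations defining $\wt{R}^{\dag}$ actually holds in $\wt{R}$; since $\wt{R}$ is a reduced $\mc{O}$-flat quotient of $\wt{R}^{\univ}$, and the target $\wt{R}$ is $\pi$-torsion-free, it suffices to check each equation after inverting $\pi$, i.e. to check it on the $E_x$-points $\wt{x}$ of $\wt{X}$ for $x\in X$, together with a reducedness/density argument. More precisely, the universal framed deformation $(\wt{V},\alpha)$ over $\wt{R}$ has the property that for every $x\in X$ the specialization at $\wt{x}$ is the pair $(V_x,\alpha_x)$, where $V_x$ is ordinary and $L_x\subset V_x$ is the unique line on which $I_F$ acts by $\chi$, with $G_F$ acting trivially on $V_x/L_x$ and $\phi$ acting on $V_x/L_x$ by $\alpha_x$. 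If I can show each of the four displayed matrix identities holds for $(V_x,\alpha_x)$ for all $x\in X$, then the corresponding entries of $\wt{R}$ vanish on a Zariski-dense set of $\MaxSpec(\wt{R}[1/\pi])$ (dense because $\wt{R}$ is reduced, so $\wt{R}\hookrightarrow \prod_x E_x$ after inverting $\pi$), hence vanish in $\wt{R}[1/\pi]$, hence in $\wt{R}$ by $\mc{O}$-flatness, so the ideal $\wt{I}$ defining $\wt{R}^{\dag}$ maps to zero in $\wt{R}$ and the factorization $\wt{R}^{\univ}\to\wt{R}^{\dag}\to\wt{R}$ follows.

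So the work reduces to a pointwise computation. Fix $x\in X$ and work over $E_x$. Choose a basis of $V_x$ adapted to $L_x$, so that $g\in I_F$ acts by $\mat{\chi(g)}{\ast(g)}{}{1}$ and $\phi$ acts by $\mat{\chi(\phi)\alpha^{-1}}{\ast}{}{\alpha}=\mat{\alpha^{-1}}{\ast}{}{\alpha}$ (using $\chi(\phi)=1$ and $\det=\chi$, so the two diagonal entries of $\phi$ multiply to $\chi(\phi)=1$, and the eigenvalue on $V_x/L_x$ is $\alpha$). The first identity $\tr(g)=\chi(g)+1$ for $g\in I_F$ is immediate. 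For the second, compute $(g-1)(g'-1)$: the matrix $g-1$ is $\mat{\chi(g)-1}{\ast(g)}{}{0}$, and multiplying two such upper-triangular matrices with vanishing lower-right corner gives $\mat{(\chi(g)-1)(\chi(g')-1)}{(\chi(g)-1)\ast(g')}{}{0}=(\chi(g)-1)\mat{\chi(g')-1}{\ast(g')}{}{0}=(\chi(g)-1)(g'-1)$, as required. The third identity is the same computation with $g'-1$ replaced by $\phi-\alpha=\mat{\alpha^{-1}-\alpha}{\ast}{}{0}$, which again has vanishing lower-right entry, so $(g-1)(\phi-\alpha)=(\chi(g)-1)(\phi-\alpha)$. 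The fourth identity $(\phi-\alpha)(g-1)=(\alpha^{-1}-\alpha)(g-1)$ is the mirror computation: $(\phi-\alpha)$ has upper-left entry $\alpha^{-1}-\alpha$ and vanishing lower-right entry, and $(g-1)$ has vanishing bottom row, so their product is $(\alpha^{-1}-\alpha)$ times the top row of $g-1$, which is $(\alpha^{-1}-\alpha)(g-1)$.

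The one genuinely delicate point — the place where the structure of $X$ enters rather than just formal matrix algebra — is the claim that for each $x\in X$ the specialization of the universal $\alpha$ is exactly the scalar by which $\phi$ acts on $V_x/L_x$, i.e. that the map $\Phi$ was set up correctly and is compatible with the quotient $\wt{R}^{\univ}\to\wt{R}$. This was essentially arranged in the construction of $\wt{R}$ via the scheme-theoretic image of $\wt{\Theta}$ in the previous subsection: $\wt{\Theta}$ sends $(V,L)$ to $(V,\alpha)$ with $\alpha$ the scalar through which $\phi$ acts on $V/L$, and $Z[1/\pi]'\to\wt{X}$ is a bijection inducing an isomorphism on rings after inverting $\pi$, so the value of $\alpha$ at $\wt{x}=\Phi(x)$ is indeed $\alpha_x$. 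Granting this, the computation above applies uniformly, and I expect no further obstacle; the main thing to be careful about is keeping the two ``sided'' identities (the third versus the fourth) straight, since the third is a left-multiplication statement about $g-1$ killing the bottom row and the fourth is a right-multiplication statement — getting the exponent $\alpha^{-1}-\alpha$ rather than $\alpha-\alpha^{-1}$ or $\chi(g)-1$ depends on reading off the correct corner of $\phi-\alpha$.
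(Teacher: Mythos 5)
Your argument is correct, but it takes a genuinely different route from the paper. The paper's proof is a one-liner: the map $\wt{\Theta}\colon Z \to \Spec(\wt{R}^{\univ})$ factors through $\Spec(\wt{R}^{\dag})$, because the universal pair over $Z$ carries a line $L$ making all the matrices (in an adapted local basis, hence in any basis by conjugation-invariance of the four equations) upper-triangular of the required shape; since $\Spec(\wt{R}_\ast)$ is the scheme-theoretic image of $\wt{\Theta}$, the ring map $\wt{R}^{\univ}\to\wt{R}_\ast$ factors through $\wt{R}^{\dag}$, and hence so does $\wt{R}^{\univ}\to\wt{R}$. You instead verify the equations pointwise on $\wt{X}=\MaxSpec(\wt{R}[1/\pi])$ and then propagate back using the facts that $\wt{R}[1/\pi]$ is reduced and Jacobson (so an element vanishing at every closed point is zero) and that $\wt{R}$ is $\mc{O}$-flat (so vanishing in $\wt{R}[1/\pi]$ implies vanishing in $\wt{R}$). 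Both routes hinge on the same elementary matrix computation, which you carry out correctly, including the fourth identity where one must read $\alpha^{-1}-\alpha$ off the upper-left corner of $\phi-\alpha$. The trade-off is that the paper's argument is shorter and works directly on the smooth $Z$ without invoking any finer properties of $\wt{R}$, whereas your argument leans on Proposition~\ref{local-2} (reducedness and $\mc{O}$-flatness of $\wt{R}$) and on Jacobson-ness of $\wt{R}[1/\pi]$ — the latter you assert only implicitly via the density/injectivity claim, and it would be worth stating explicitly that $\wt{R}[1/\pi]$ is Jacobson (standard for a localization of a complete local noetherian $\mc{O}$-algebra). Your version is slightly longer but has the virtue of making completely explicit the computation that the paper hides behind ``clearly factors through.''
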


\begin{proof}
The map $\wt{\Theta}:Z \to \Spec(\wt{R}^{\univ})$ clearly factors through the closed immersion $\Spec(\wt{R}^{\dag})
\to \Spec(\wt{R}^{\univ})$, which proves the
lemma.
\end{proof}

\subsection{The main theorems}

We prove two main theorems.  The first is the following:

\begin{theorem}
\label{thm1}
The natural map $\wt{R}^{\dag} \to \wt{R}$ is an isomorphism.
\end{theorem}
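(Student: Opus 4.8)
The plan is to carry out the five-step strategy of \S\ref{ss:out} for the ring $\wt{R}^{\dag}$. The lemma of \S\ref{ss:rdag} already gives the first half of step (b): $\wt{R}^{\univ}\to\wt{R}$ factors through $\wt{R}^{\dag}$, so $\wt{R}^{\dag}\to\wt{R}$ is surjective. What remains is to show that $\wt{R}^{\dag}$ is $\mc{O}$-flat and reduced and that $\wt{R}^{\dag}[1/\pi]\to\wt{R}[1/\pi]$ is an isomorphism; the theorem then follows formally, the kernel of $\wt{R}^{\dag}\to\wt{R}$ being a submodule of the $\pi$-torsion-free ring $\wt{R}^{\dag}$ which dies after inverting $\pi$.

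First I would pin down $\wt{R}^{\dag}[1/\pi]$. A point of $\MaxSpec(\wt{R}^{\dag}[1/\pi])$ is a pair $(V,\alpha)$ over a finite extension of $E$ satisfying the four displayed conditions of \S\ref{ss:rdag}. Since the determinant is cyclotomic, for $g\in I_F$ with $\chi(g)\ne 1$ the Cayley--Hamilton relation $(g-\chi(g))(g-1)=0$ combines with the second condition to force all the lines $\im(g-1)$ to coincide; this produces a single $G_F$-stable line $L$ on which $I_F$ acts by $\chi$ and with trivial action on $V/L$, and the last two conditions identify $\alpha$ with the scalar by which $\phi$ acts on $V/L$. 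Hence $(V,\alpha)\in\wt{X}$, so $\MaxSpec(\wt{R}^{\dag}[1/\pi])\subseteq\wt{X}$; the reverse inclusion is the surjectivity above. As $\wt{R}^{\univ}[1/\pi]$ is Jacobson, a reduced quotient is determined by its maximal spectrum, so $\wt{R}^{\dag}[1/\pi]_{\red}\cong\wt{R}[1/\pi]$. In particular $\wt{R}^{\dag}[1/\pi]$ is equidimensional of dimension $d+3$ with exactly three minimal primes.

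The crux is the special fibre. Because $\chi\equiv 1\pmod p$ and $V_0$ is trivial, a homomorphism $\wt{R}^{\dag}\to A$ with $A$ a $k$-algebra is the same as a pair $(V,\alpha)$ for which $g\mapsto\rho(g)-1$ is a homomorphism from $I_F$ to the additive group of strongly nilpotent matrices over $A$ (in the sense of \S\ref{s:modmat}) with pairwise-annihilating values, and for which $\rho(\phi)$ and $\alpha$ interact with these values as $\phi$ and $\alpha$ do with $m_1,\dots,m_{d+1}$ in $\mc{C}_d$. Since the values are $p$-torsion and the representation lands in a pro-$p$ group, this homomorphism factors through the inertia $U$ of the quotient $G$ of \S\ref{s:alg}; Proposition~\ref{galstruct} (applicable since $\mu_p\subseteq F$) gives $U\cong\F_p\oplus\F_p\lbb T\rbb^{\oplus d}$, and the third and fourth conditions make $T=\phi-1$ act on the space of values through the scalar $\alpha^{-2}-1$. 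An $\F_p\lbb T\rbb$-linear map from $U$ is thus the same as a tuple $(m_1,\dots,m_{d+1})$ of matrices with $(\alpha^2-1)m_{d+1}=0$, i.e.\ with $m_{d+1}$ commuting with $\phi$; tracking this through the framing identifies $\Spec(\wt{R}^{\dag}\otimes_{\mc{O}}k)$ with the completion of $\mc{C}_d$ at $c$. By Theorem~\ref{thm-c} and the excellence of $\mc{C}_d$ this completion is reduced, and a flat base change argument along the inclusion of $\mc{O}_{\mc{C}_d,c}$ into the product of the completed local rings of its three (normal) components shows it is equidimensional of dimension $d+3$ with exactly three minimal primes. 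Getting these equations to match $\mc{C}_d$ on the nose---in particular the role of the distinguished generator and of the framing---is the step I expect to be the main obstacle.

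With the two fibres understood, the proof concludes formally. Proposition~\ref{flat}, applied to the catenary noetherian local ring $\wt{R}^{\dag}$ and $\pi$, shows $\pi$ is a non-zerodivisor, so $\wt{R}^{\dag}$ is $\mc{O}$-flat; as $\wt{R}^{\dag}$ is moreover $(\pi)$-adically separated (Krull intersection) with reduced special fibre, Proposition~\ref{reduced} shows $\wt{R}^{\dag}$ is reduced. Then $\wt{R}^{\dag}[1/\pi]$, being reduced, equals $\wt{R}^{\dag}[1/\pi]_{\red}\cong\wt{R}[1/\pi]$, so the surjection $\wt{R}^{\dag}[1/\pi]\to\wt{R}[1/\pi]$ is an isomorphism; its kernel is $J[1/\pi]$ where $J=\ker(\wt{R}^{\dag}\to\wt{R})$, so $J[1/\pi]=0$, and since $J$ is a submodule of the $\pi$-torsion-free ring $\wt{R}^{\dag}$ we get $J=0$, i.e.\ $\wt{R}^{\dag}\to\wt{R}$ is an isomorphism.
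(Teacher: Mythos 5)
Your proposal is correct and follows essentially the same route as the paper: you re-derive the content of Lemma~\ref{lem-a} (the generic fibre matches $\wt{X}$) and Lemma~\ref{lem-b} (the special fibre is the completion of $\mc{C}_d$ at $c$), invoke Theorem~\ref{thm-c} together with Propositions~\ref{flat} and~\ref{reduced}, and close by the flatness/reducedness argument. The one spot you flag as a potential obstacle—matching the equations to $\mc{C}_d$ via the choice of topological generators and the framing—is precisely what the paper's Lemma~\ref{lem-b} carries out, and your computation of the $T$-action (scalar $\alpha^{-2}-1$ on the values $\rho(g)-1$, via the third and fourth conditions) is the correct heart of that identification; you are also slightly more explicit than the paper in invoking excellence to pass reducedness and the component count from $\mc{C}_d$ to its completion.
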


This theorem gives a description of the points of $\wt{R}$.  Our second theorem is the following:

\begin{theorem}
\label{thm2}
Let $\ast \in \{1, 2, 3\}$.  The ring $\wt{R}_{\ast}$ is normal and Cohen--Macaulay but not Gorenstein.
\end{theorem}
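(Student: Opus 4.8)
The plan is to combine Theorem~\ref{thm1}, which identifies $\wt R^\dag$ with $\wt R$, with the flatness and descent machinery of \S\ref{s:alg} and the explicit moduli-space computations of \S\ref{s:modmat}. The key point is that the special fiber $\wt R^\dag_\ast \otimes_{\mc O} k = \wt R_\ast \otimes_{\mc O} k$ should be identified — via class field theory and Proposition~\ref{galstruct} — with (a formal completion of) one of the spaces $\mc A_r$, $\mc B_r$, $\mc C_r$ studied in \S\ref{s:modmat}. Concretely, for $\ast=1$ the conditions defining $\wt D^\dag$ together with $\chi\equiv 1\pmod p$ force the residual deformation to factor through the group $G$ of Proposition~\ref{galstruct}, with the $m_i$ recording the action of topological generators of the inertia quotient $U$ and $\phi,\alpha$ recording the Frobenius data; since $U \cong \F_p \oplus \F_p\lbb T\rbb^{\oplus d}$ (the $p$th roots of unity are present because $\chi\equiv 1$), one reads off that $\wt R_1\otimes_{\mc O}k$ is a completion of $\mc B_d$ (or of $\mc A_{d+2}$, or of $\mc C_d$, according to which $X_\ast$ one is on). I would first carry out this identification carefully, matching the defining equations of $\wt R^\dag_\ast\otimes_{\mc O}k$ with the equations \eqref{a-eq}, \eqref{b-eq-1}--\eqref{b-eq-3}.

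Granting the identification, Theorems~\ref{thm-a}, \ref{thm-b}, \ref{thm-c} tell us that $\wt R_\ast\otimes_{\mc O}k$ is reduced (integral in the $\mc A$ and $\mc B$ cases), of dimension $d+3$, with the expected number of minimal primes (one, in the $\ast\in\{1,2,3\}$ cases). Next I would invoke Proposition~\ref{flat}: since $\wt R_\ast[1/\pi]$ is regular and a domain (Proposition in \S\ref{ss:rtilde}, equidimensional of dimension $d+4$, one minimal prime) and the special fiber is reduced, equidimensional of dimension $d+3$ with the same one minimal prime, $\pi$ is a non-zerodivisor — but of course we already know $\wt R_\ast$ is $\mc O$-flat. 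More to the point, having matched the whole ring $\wt R_\ast = \wt R^\dag_\ast$ (via Theorem~\ref{thm1}) with an explicit object whose special fiber is one of the Cohen--Macaulay spaces above, I would apply the standard fact that a noetherian local ring which is flat over a DVR with Cohen--Macaulay closed fiber is itself Cohen--Macaulay (as used already in the proof of Theorem~\ref{thm-b}). For normality I would use Proposition~\ref{normal}: $\wt R_\ast$ is a domain, $\wt R_\ast[1/\pi]$ is regular hence normal, and the special fiber is reduced, so $\wt R_\ast$ is normal. Finally, for the non-Gorenstein assertion I would use the same localization argument: the closed point of $\Spec(\wt R_\ast)$ lies over the non-Gorenstein point ($a$, $b$, or $c$) of the special fiber, and since $\pi$ is a non-zerodivisor in the maximal ideal, $\wt R_\ast$ fails to be Gorenstein there as well (\cite[Prop.~3.1.19b]{BrunsHerzog}), and hence $\wt R_\ast$ is not Gorenstein.

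The main obstacle I expect is the first step: precisely identifying the special fiber $\wt R^\dag_\ast\otimes_{\mc O}k$ with the correct moduli space $\mc A_r$, $\mc B_r$, or $\mc C_r$. This requires (i) showing that a framed deformation to a $k$-algebra satisfying the four displayed conditions of \S\ref{ss:rdag} has $G_F$-action factoring through the abelianization, indeed through the finite-level quotient governed by Proposition~\ref{galstruct}; (ii) translating the trace and product conditions on Galois elements into the strong-nilpotency and commuting conditions $m_im_j=0$, $m_i\phi = \alpha m_i$ that define the matrix spaces; and (iii) sorting out which of the three components $X_1, X_2, X_3$ matches $\mc B_r$ versus $\mc A_{r+2}$ (the unramified-twist issue controlled by whether $\psi^2$ is trivial, as in the lemma identifying $X = X_1\cup X_2\cup X_3$). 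Once the dictionary is set up, the transfer of normality, Cohen--Macaulayness and failure of Gorensteinness from special fiber to the whole ring is routine given Propositions~\ref{flat}, \ref{reduced}, \ref{normal} and \cite{BrunsHerzog}.
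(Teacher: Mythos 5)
Your high-level strategy — identify the special fiber with one of the moduli spaces of \S\ref{s:modmat}, then lift normality, the Cohen--Macaulay property, and the failure of the Gorenstein property up the non-zerodivisor $\pi$ via Proposition~\ref{normal} and \cite[Thm.~2.1.3a, Prop.~3.1.19b]{BrunsHerzog} — is exactly the right idea and is how the paper concludes. But there is a real gap in the first step, the one you yourself flag as the main obstacle. You propose to identify each $\wt R_\ast\otimes_{\mc O}k$ with a moduli space \emph{directly}, by ``matching the defining equations of $\wt R^\dag_\ast\otimes_{\mc O}k$'' with \eqref{a-eq}, \eqref{b-eq-1}--\eqref{b-eq-3}. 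That cannot be carried out for $\ast=3$: the ring $\wt R_3$ is a quotient of $\wt R$ by a minimal prime, and no explicit equations cutting out that prime are known (the paper makes exactly this point in the remark at the end of \S\ref{s:local}). There is no functor $\wt D^\dag_3$ and hence no ring $\wt R^\dag_3$ with a moduli description to match against. Consequently the proposed direct identification has nothing to latch onto for the crystalline component.

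The paper's route is indirect and is worth internalizing. Lemma~\ref{lem-b} identifies the special fiber of the \emph{whole} ring $\wt R=\wt R^\dag$ with the complete local ring of $\mc C_d$ at $c$. The remark following Proposition~\ref{flat} then gives a bijection between the minimal primes of $\wt R$, of $\wt R[1/\pi]$, and of $\wt R/\pi\wt R$. Matching these against the component decomposition of $\mc C_d$ in Theorem~\ref{thm-c}, and against the three quotients $\wt R_\ast$ on the generic-fiber side, one finds that the two components cut out by $\alpha=\pm1$ (i.e.\ $\wt R_1,\wt R_2$) have special fibers the completion of $\mc A_{d+2}$ at $a$, and the remaining component $\wt R_3$ has special fiber the completion of $\mc B_d$ at $b$ ``because it is the only thing left over.'' This also corrects a misassignment in your sketch: $\mc B_d$ goes with $\ast=3$, not $\ast=1$; and $\mc C_d$ is never the special fiber of any single $\wt R_\ast$ (it is reducible, while each $\wt R_\ast$ is a domain). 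Once the special fibers are pinned down this way, the rest of your argument is sound and matches the paper.
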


The rest of this section is devoted to proving these two theorems.  We begin with some lemmas.

\begin{lemma}
\label{lem-a}
The natural map $\wt{R}^{\dag}[1/\pi]_{\red} \to \wt{R}[1/\pi]$ is an isomorphism.
\end{lemma}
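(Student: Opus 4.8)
By the previous lemma the map $\wt{R}^{\univ} \to \wt{R}$ factors through $\wt{R}^{\dag}$, and since $\wt{R}$, hence $\wt{R}[1/\pi]$, is reduced, this yields a surjection $\wt{R}^{\dag}[1/\pi]_{\red} \to \wt{R}[1/\pi]$; the plan is to prove this map is injective. Recall that $\wt{R}^{\univ}$ is a quotient of a power series ring $\mc{O}\lbb x_1, \ldots, x_n \rbb$, and that it is a standard fact that inverting $\pi$ in such a ring produces a Jacobson ring; hence $\wt{R}^{\dag}[1/\pi]$ is Jacobson and its nilradical is the intersection of its maximal ideals. It therefore suffices to show that the kernel $J$ of $\wt{R}^{\dag}[1/\pi] \to \wt{R}[1/\pi]$ is contained in every maximal ideal --- equivalently, that every closed point of $\Spec(\wt{R}^{\dag}[1/\pi])$ already lies in the closed subscheme $\Spec(\wt{R}[1/\pi])$. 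Such a closed point has finite residue field $E_x$ over $E$ and corresponds to a pair $(V_x, \alpha)$, where $V_x$ is a deformation of $V_0$ over $E_x$ with cyclotomic determinant satisfying the four relations defining $\wt{D}^{\dag}$ and $\alpha$ is a root of the characteristic polynomial of $\phi$ on $V_x$; the task is to check that this point equals $\Phi(x)$ for some $x \in X$, i.e.\ that it belongs to $\wt{X}$.

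To carry this out, set $W=\sum_{g \in I_F} \im(g-1 \mid V_x)$. The relation $(g-1)(g'-1)=(\chi(g)-1)(g'-1)$ rearranges to $(g-\chi(g))(g'-1)=0$, which says that $g$ acts on $W$ by the scalar $\chi(g)$ for every $g \in I_F$; moreover $W$ is stable by $G_F$ (as $I_F$ is normal and $\chi$ extends to $G_F$) and $I_F$ acts trivially on $V_x/W$. Now $W \ne 0$: otherwise $\tr(g)=2$ on $V_x$ for all $g \in I_F$, contradicting $\tr(g)=\chi(g)+1$ and the fact that $\chi$ is nontrivial on $I_F$. And $W \ne V_x$: otherwise $\det(V_x\vert_{I_F})=\chi^2 \ne \chi$. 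Hence $W$ is a line, $V_x\vert_{I_F}$ is an extension of the trivial character by $E_x(\chi)$, so $V_x$ is ordinary, $x \in X$, and $W=L_x$ is the canonical $I_F$-stable line.

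It remains to identify $\alpha$ with the scalar $\alpha_x$ by which $\phi$ acts on $V_x/L_x$. Choose $g \in I_F$ with $\chi(g) \ne 1$; then on $V_x$ the operator $g-\chi(g)$ is invertible on $V_x/L_x$ and has kernel exactly $L_x$. The relation $(g-1)(\phi-\alpha)=(\chi(g)-1)(\phi-\alpha)$ rearranges to $(g-\chi(g))(\phi-\alpha)=0$, whence $\im(\phi-\alpha) \subseteq L_x$ and $\phi$ acts on $V_x/L_x$ by $\alpha$; that is, $\alpha=\alpha_x$ and the point is $\Phi(x) \in \wt{X}$. (The remaining two relations defining $\wt{D}^{\dag}$ are then automatically satisfied on such a point and play no role here.) With this established, $J$ is contained in every maximal ideal of $\wt{R}^{\dag}[1/\pi]$, hence in its nilradical, hence equals it because $\wt{R}[1/\pi]$ is reduced, so $\wt{R}^{\dag}[1/\pi]_{\red} \to \wt{R}[1/\pi]$ is an isomorphism. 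The one step that requires care is the passage from rings to closed points, which uses the Jacobson property of the generic fiber; after that, the verification is elementary linear algebra in a two-dimensional space.
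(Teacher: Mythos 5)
Your proof is correct and follows the same overall strategy as the paper: reduce to showing that every closed point of $\Spec(\wt{R}^{\dag}[1/\pi])$ already lies in $\wt{X}=\MaxSpec(\wt{R}[1/\pi])$, and then invoke reducedness of $\wt{R}[1/\pi]$ to conclude. You make explicit the Jacobson-ring step that the paper leaves implicit, which is a welcome clarification.

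The linear-algebra verification is carried out a bit differently from the paper. The paper uses the trace condition to pin down the semisimplification of $V_x|_{I_F}$, then treats the ``wrong-way'' case (an extension of $\chi$ by $1$) by writing the cocycle and showing the second relation forces it to be a coboundary, hence split; it then identifies $\alpha$ using the \emph{fourth} relation. You instead directly form $W=\sum_{g\in I_F}\im(g-1|_{V_x})$, observe from the second relation that $I_F$ acts by $\chi$ on $W$ and trivially on $V_x/W$, rule out $W=0$ via the trace relation and $W=V_x$ via the determinant condition, and identify $\alpha$ using the \emph{third} relation. This avoids the case split and the explicit cocycle computation, so it is arguably a touch cleaner; both arguments are valid and roughly the same length.

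One small slip: you use relations (1), (2), (3) and the determinant condition, so only \emph{one} of the four defining relations of $\wt{D}^{\dag}$ (the fourth) is unused --- not two, as your parenthetical claims. This is cosmetic and does not affect the proof.
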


\begin{proof}
Let $\wt{x}$ be a point in $\MaxSpec(\wt{R}^{\dag}[1/\pi])$ and let $(V, \alpha)$ be the corresponding representation
and eigenvalue of $\phi$.  Let $x$ be the image of $\wt{x}$ in $\MaxSpec(R^{\univ}[1/\pi])$, so that $V=V_x$.  The
equations of \S \ref{ss:rdag} hold on $V$.  The first of these, namely
$\tr(g)=\chi(g)+1$ for $g \in I_F$, shows that the semi-simplification of $V \vert_{I_F}$ is $\chi \oplus 1$.

Suppose that $V \vert_{I_F}$ is an extension of $\chi$ by 1, so that with respect to a suitable basis the action
of $I_F$ is given by
\begin{displaymath}
g \mapsto \mat{1}{f(g)}{}{\chi(g)}.
\end{displaymath}
The second equation of \S \ref{ss:rdag} shows that
\begin{displaymath}
(\chi(g)-1) f(g')=(\chi(g')-1) f(g)
\end{displaymath}
for all $g, g' \in I_F$, which shows that the cocycle $f$ is a coboundary (fix $g'$ with $\chi(g') \ne 1$).  Thus
the extension is split.

The previous paragraph shows that we can regard $V \vert_{I_F}$ as an extension of 1 by $\chi$.  Thus $x$ belongs
to $X$.  Let $\beta$ be the eigenvalue of $\phi$ on the inertial coinvariants of $V$.
Then the fourth equation in \S \ref{ss:rdag} shows that $(\beta^{-1}-\alpha) (g-1)=(\alpha^{-1}-\alpha) (g-1)$ holds on
$V_x$ for all $g \in I_F$.  This implies $\beta=\alpha$ (consider $g \in I_F$ with $\chi(g) \ne 1$), and so
$\wt{x}=\Phi(x)$.  This shows that $\wt{x}$ belongs to $\wt{X}$.

We have just shown that the inclusion $\wt{X}=\MaxSpec(\wt{R}[1/\pi]) \subset \MaxSpec(\wt{R}^{\dag}[1/\pi])$ induced
by the surjection $\wt{R}^{\dag} \to \wt{R}$ is in fact an equality.  The lemma follows.
\end{proof}

\begin{lemma}
\label{lem-b}
The ring $\wt{R}^{\dag}/\pi \wt{R}^{\dag}$ is isomorphic to the complete local ring of the scheme $\mc{C}_d$
at the point $c$ (see \S \ref{cspace} for the definition of $\mc{C}_d$ and $c$, and recall $d=[F:\Q_p]$).
\end{lemma}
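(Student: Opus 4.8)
The plan is to exhibit both $\wt{R}^{\dag}/\pi\wt{R}^{\dag}$ and the complete local ring $\wh{\mc{O}}_{\mc{C}_d,c}$ as pro-representing objects of one and the same functor, and then invoke uniqueness of the pro-representing object. Let $\ms{C}_k \subset \ms{C}_{\mc{O}}$ be the full subcategory of algebras killed by $\pi$, i.e.\ the artinian local $k$-algebras with residue field $k$. Since $\wt{R}^{\dag}$ pro-represents $\wt{D}^{\dag,\Box}$ on $\ms{C}_{\mc{O}}$, the ring $\wt{R}^{\dag}/\pi\wt{R}^{\dag}$ pro-represents the restriction of $\wt{D}^{\dag,\Box}$ to $\ms{C}_k$. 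And since $\mc{C}_d$ is of finite type over $k$ with $c$ a $k$-point, $\wh{\mc{O}}_{\mc{C}_d,c}$ pro-represents the functor $A \mapsto \{x \in \mc{C}_d(A) : x \equiv c \bmod \mf{m}_A\}$ on $\ms{C}_k$. So it suffices to identify these two functors naturally in $A$.

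First I would simplify the conditions of \S\ref{ss:rdag} over a $k$-algebra $A$. As $\mu_p \subset F$, the mod-$p$ cyclotomic character is trivial, so $\chi(g) = 1$ in $A$ for all $g$. Writing $n_g = \rho(g) - 1$, the cyclotomic-determinant condition and the first condition of \S\ref{ss:rdag} force $\tr(n_g) = \det(n_g) = 0$ for $g \in I_F$, so each $n_g$ is strongly nilpotent, while the remaining three conditions become
\begin{displaymath}
n_g n_{g'} = 0, \qquad n_g \rho(\phi) = \alpha n_g, \qquad \rho(\phi) n_g = \alpha^{-1} n_g \qquad (g,g' \in I_F);
\end{displaymath}
here $\alpha$ is a unit, since $\alpha(\tr\rho(\phi) - \alpha) = \det\rho(\phi) = 1$, and in fact the third equation is forced by the first two via adjugates. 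The first equation makes $g \mapsto n_g$ additive, so $\rho|_{I_F}$ has abelian, $p$-torsion image; as $\rho$ is residually trivial its image is a finite $p$-group, whence $\rho$ factors through the group $G = \Gal(F'/F)$ introduced before Proposition~\ref{galstruct}, and $\rho|_{I_F}$ through its inertia subgroup $U$. Finally, the last two displayed equations give $\rho(\phi) n_g \rho(\phi)^{-1} = \alpha^{-2} n_g$, so Frobenius conjugation acts on $n(U)$ through the scalar $\alpha^{-2}$; hence $g \mapsto n_g$ is an $\F_p\lbb T \rbb$-module homomorphism from $U$ to $M_2(A)$, where $T$ acts on the target by $\alpha^{-2} - 1$.

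By Proposition~\ref{galstruct}, $U \cong \F_p \oplus \F_p\lbb T \rbb^{\oplus d}$; fixing generators $g_1,\dots,g_d$ of the free part and $g_{d+1}$ of the $\F_p$-summand, the homomorphism $g \mapsto n_g$ is equivalent to the data of matrices $m_i := n_{g_i}$ subject to the single relation $(\alpha^{-2}-1)m_{d+1} = 0$, equivalently $(\alpha^2-1)m_{d+1} = 0$ since $\alpha$ is a unit; and (using additivity, $m_i m_j = 0$, and that $\det$ is a quadratic form on $2\times 2$ matrices) requiring the $m_i$ to be strongly nilpotent with vanishing pairwise products is the same as requiring it of every $n_g$. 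Thus a point of $\wt{D}^{\dag,\Box}(A)$ is exactly the data of $\rho(\phi) \in \GL_2(A)$ of determinant $1$, a root $\alpha$ of its characteristic polynomial, and strongly nilpotent $m_1,\dots,m_{d+1}$ with $m_i m_j = 0$, $m_i\rho(\phi) = \alpha m_i$, and $\rho(\phi) m_{d+1} = m_{d+1}\rho(\phi)$, residual triviality translating to $\rho(\phi) \equiv 1$, $\alpha \equiv 1$, $m_i \equiv 0 \pmod{\mf{m}_A}$; comparing with \S\ref{cspace}, this is precisely an $A$-point of $\mc{C}_d$ reducing to $c$, and the correspondence is manifestly natural in $A$. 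The step demanding the most care is the reverse direction: from such a tuple one must rebuild an honest continuous representation of $G_F$, by extending $g_i \mapsto 1 + m_i$ to $U$ by $\F_p\lbb T \rbb$-linearity and then to $G = U \rtimes \Z_p$ via $\phi \mapsto \rho(\phi)$. Well-definedness on $U$ uses $m_i m_j = 0$; the semidirect-product compatibility $\rho(\phi)n_g\rho(\phi)^{-1} = n_{\phi g\phi^{-1}}$ follows from $m_i\rho(\phi) = \alpha m_i$ together with $\rho(\phi) m_i = \alpha^{-1} m_i$ (again by adjugates); and the relation $(\alpha^2-1)m_{d+1} = 0$ is exactly what makes $g_{d+1}$ land in the $T$-torsion summand of $U$ — in other words the presentation of $U$ supplied by Proposition~\ref{galstruct} is matched precisely by the defining relations of $\mc{C}_d$. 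Granting this, the four conditions of \S\ref{ss:rdag} are readily verified for the reconstructed $\rho$, the two functors coincide, and the lemma follows.
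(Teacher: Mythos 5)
Your proof is correct and follows essentially the same route as the paper: both compare the functors $\wt{D}^{\dag,\Box}$ and $\wh{\mc{C}}_d$ on the category of local $k$-algebras by simplifying the defining equations of $\wt{D}^{\dag}$ modulo $\pi$, factoring $\rho$ through the group $G$ of Proposition~\ref{galstruct}, and matching generators-and-relations of $G$ against the defining equations of $\mc{C}_d$. The only genuine difference is one of emphasis: where the paper phrases the identification via the (profinite) presentation of $G$ and lets the reader extract the bijection, you make the $\F_p\lbb T\rbb$-module structure on the map $g\mapsto n_g$ explicit (with $T$ acting on the target by the nilpotent scalar $\alpha^{-2}-1$) and spell out the reconstruction of $\rho$ from a tuple in $\mc{C}_d(A)$; this is a somewhat more detailed treatment of the inverse direction but not a different argument. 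One tiny imprecision: you say the relation $\rho(\phi)n_g=\alpha^{-1}n_g$ is ``forced by the first two via adjugates,'' but it only uses the second relation $n_g\rho(\phi)=\alpha n_g$ together with strong nilpotence of $n_g$ and $\det\rho(\phi)=1$; the first relation $n_gn_{g'}=0$ plays no role there.
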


\begin{proof}
Let $\wh{\mc{C}}$ denote the formal completion of $\mc{C}_d$ at the point $c$.  Let $\ms{C}_k$ denote the category of
complete local noetherian $k$-algebras with residue field $k$.  For $A \in
\ms{C}_k$, the set $\wh{\mc{C}}(A)$ consists of those elements of $\mc{C}(A)$ whose image in $\mc{C}(k)$ is the
point $c$.  We will show that the functors $\wt{D}^{\dag, \Box}$ and $\wh{\mc{C}}$ are isomorphic on the category
$\ms{C}_k$.  As $\wt{R}^{\dag, \Box}/\pi \wt{R}^{\dag}$ represents the former
functor, this will prove the lemma.

Let $A \in \ms{C}_k$.  We regard elements of $\wt{D}^{\Box}(A)$ as pairs
$(\rho, \alpha)$ where $\rho:G_F \to \GL_2(A)$ is a homomorphism reducing to the trivial homomorphism modulo the
maximal ideal of $A$ and $\alpha$ is an element of $A$ satisfying the characteristic polynomial of $\rho(\phi)$.
An element $(\rho, \alpha)$ of $\wt{D}^{\Box}(A)$ belongs to $\wt{D}^{\dag, \Box}(A)$ if and only if the following
equations hold:
\begin{itemize}
\item $\tr{\rho(g)}=2$ for all $g \in I_F$.
\item $(\rho(g)-1)(\rho(g')-1)=0$ for $g, g' \in I_F$.
\item $(\rho(g)-1)(\rho(\phi)-\alpha)=0$ for $g \in I_F$.
\item $(\rho(\phi)-\alpha)(\rho(g)-1)=(\alpha-\alpha^{-1})(\rho(g)-1)$ for $g \in I_F$.
\end{itemize}
These equations come from combining the defining equations of $\wt{D}^{\dag}$ with the assumption that $\chi$ reduces
to 1 modulo $p$.  Of course, we also have $\det{\rho(g)}=1$ for any such deformation.  These conditions imply that
$\rho(g)-1$ is strongly nilpotent for any $g \in I_F$, and thus $\rho(g)^p=1$ for any such $g$.  We thus see that if
$(\rho, \alpha)$ belongs to $\wt{D}^{\dag, \Box}(A)$ then $\rho \vert_{I_F}$ factors through the maximal abelian
quotient of $I_F$ of exponent $p$.  Of course, $\rho$ factors through the maximal $p$-power quotient of $G_F$ since its
reduction modulo the maximal ideal of $A$ is trivial.

Let $G$ be the maximal $p$-power quotient of $G_F$ in which inertia is abelian and of exponent $p$.  Let $U$ be the
inertia group in $G$.  We have a short exact sequence
\begin{displaymath}
0 \to U \to G \to \Z_p \to 0.
\end{displaymath}
The image of $\phi$ is a topological generator of $\Z_p$.  We give $U$ the structure of an $\F_p\lbb T \rbb$-module
by letting $T$ act by $\phi-1$.  As computed in Proposition~\ref{galstruct}, $U$ is isomorphic to
$\F_p \oplus \F_p \lbb T \rbb^{\oplus d}$.  Let $g_1, \ldots, g_d$ be an $\F_p \lbb T \rbb$-basis for the free
part of $U$ and let $g_{d+1}$ be a generator of the $T$-torsion of $U$.  Note that to give a continuous map from $G$ to
some discrete group $\Gamma$ is the same as giving elements $\ol{\phi}$ and $\ol{g}_1, \ldots, \ol{g}_{d+1}$ of
$\Gamma$ such that the $\ol{g}_i$ commute with each other, $\ol{g}_i^p=1$ for each $i$, $\ol{\phi}$ has finite order
and $\ol{\phi}$ and $\ol{g}_{d+1}$ commute.

For $A \in \ms{C}_k$, we define a map $\wt{D}^{\dag, \Box}(A) \to \wh{\mc{C}}(A)$ by taking $(\rho, \alpha)$ to
the tuple $(\phi, \alpha; m_1, \ldots, m_{d+1})$ where $\phi=\rho(\phi)$ (apologies for the bad notation) and
$m_i=\rho(g_i)-1$.  The defining equations for $\wt{D}^{\dag, \Box}$ given above show that this map is a
bijection.
\end{proof}

We now prove the first theorem.

\begin{proof}[Proof of Theorem~\ref{thm1}]
We follow the plan laid out in \S \ref{ss:out}.  We have already completed step (a) by guessing
the equations for $\wt{R}$ and defining the ring $\wt{R}^{\dag}$.  We now complete the process.
\begin{enumerate}
\setcounter{enumi}{1}
\item By Lemma~\ref{lem-a}, $\wt{R}^{\dag}[1/\pi]_{\red} \to \wt{R}[1/\pi]$ is an isomorphism.  In particular,
by Proposition~\ref{local-2},
$\wt{R}^{\dag}[1/\pi]$ is equidimensional of dimension $d+3$ and has three minimal primes.
\item By Lemma~\ref{lem-b} and Theorem~\ref{thm-c}, $\wt{R}^{\dag}/\pi \wt{R}^{\dag}$ is reduced, equidimensional
of dimension $d+3$ and has three minimal primes.
\item By Proposition~\ref{flat}, the ring $\wt{R}^{\dag}$ is flat over $\mc{O}$.  By Proposition~\ref{reduced} it is
reduced.
\item Since $\wt{R}^{\dag}$ is $\mc{O}$-flat and reduced and the map $\wt{R}^{\dag}[1/\pi]_{\red} \to \wt{R}[1/\pi]$
is an isomorphism, it follows that $\wt{R}^{\dag} \to \wt{R}$ is an isomorphism.
\end{enumerate}
This completes the proof.
\end{proof}

We now turn to the second theorem.

\begin{proof}[Proof of Theorem~\ref{thm2}]
As shown in the proof of Proposition~\ref{flat}, the ring $\wt{R}$ has three minimal primes, and these minimal primes
are naturally in correspondence with those of $\wt{R}/\pi \wt{R}$ and $\wt{R}[1/\pi]$.  It is clear that two of
these minimal primes are defined by the equations $\alpha=1$ and $\alpha=-1$.  The quotients by these minimal primes
are the rings $\wt{R}_1$ and $\wt{R}_2$.  We thus see that $\wt{R}_1/\pi \wt{R}_1$ and $\wt{R}_2/\pi \wt{R}_2$ are both
isomorphic to the complete local ring of $\mc{A}_{d+2}$ at $a$.  Finally, the third minimal prime gives $\wt{R}_3$.  It
is clear that $\wt{R}_3/\pi \wt{R}_3$ is isomorphic to the complete local ring of $\mc{B}_d$ at $b$, since this is the
only thing left over.

Appealing to Theorem~\ref{thm-a} and Theorem~\ref{thm-b}, we see that $\wt{R}_{\ast}/\pi \wt{R}_{\ast}$ is integral,
normal, Cohen--Macaulay and not Gorenstein.  The ring $R_{\ast}$ is a domain and $R_{\ast}[1/\pi]$ is normal (as it is
regular).  It follows that $R_{\ast}$ is normal (by Proposition~\ref{normal}), Cohen--Macaulay
(by \cite[Thm.~2.1.3a]{BrunsHerzog}) and not Gorenstein (by \cite[Prop.~3.1.19b]{BrunsHerzog}).
\end{proof}

\begin{remark}
We can analyze the rings $\wt{R}_1$ and $\wt{R}_2$ directly, without using the ring $\wt{R}$.  Indeed, we can
define $\wt{R}_1$ as the quotient of $\wt{R}$ be the equation $\alpha=1$, which realizes it directly as a quotient of
$R^{\univ}$.  We can then go through the above arguments, but specifically for $\wt{R}_1$.  However, we have not found
a way to analyze $\wt{R}_3$ directly:  we do not know the equations that cut it out from $\wt{R}$.  Note, however,
that we do know how to cut out $\wt{R}_3/\pi \wt{R}_3$ from $\wt{R}/\pi \wt{R}$:  it is defined by the equation
$\rho(g_{d+1})=1$, where $g_{d+1}$ is as in the proof of Lemma~\ref{lem-b}.  We have therefore studied $\wt{R}_3$
indirectly by studying the entire ring $\wt{R}$.
\end{remark}

\section{Global deformation rings}
\label{s:global}

In this section we give some global applications of the local results of the previous section.  These applications
are really just some superficial remarks, and reasonably well-known, so we do not bother going into many details.

\subsection{Torsion-freeness of deformation rings}

Let $E$, $\mc{O}$ and $k$ be as in the previous section.  Let $F$ be a totally real field, let $\Sigma$ be a finite
set of finite places of $F$, including all those above $p$, and let $\ol{\rho}:G_{F, \Sigma} \to \GL_2(k)$ be a
totally odd continuous representation of the absolute Galois group of $F$ unramified away from $\Sigma$.  We assume
that $\ol{\rho}$ is absolutely irreducible and has determinant $\chi$, the cyclotomic character.  We define several
deformation rings (all with fixed determinant $\chi$):
\begin{itemize}
\item For $v \in \Sigma$, let $R_v^{\Box, \univ}$ denote the universal framed deformation ring of
$\ol{\rho} \vert_{G_{F_v}}$.
\item For $v \in \Sigma$, choose a finite $R_v^{\Box, \univ}$-algebra $R_v^{\Box}$ which $\mc{O}$-flat and
equidimensional of dimension $[F_v:\Q_p]+4$ if $p \mid v$ or 4 if $p \nmid v$.
\item Let $R^{\Box, \univ}_{\loc}$ be the completed tensor product of the $R_v^{\Box, \univ}$ over $\mc{O}$ and let
$R^{\Box}_{\loc}$ be the completed tensor product of the $R_v^{\Box}$ over $\mc{O}$.
\item Let $R^{\Box, \univ}$ be the universal deformation ring of $\ol{\rho}$ with framings at each $v \in \Sigma$ and
let $R^{\Box}$ be the completed tensor product of $R^{\Box, \univ}$ with $R^{\Box}_{\loc}$ over
$R^{\Box, \univ}_{\loc}$.
\item Let $R^{\univ}$ be the universal (unframed) deformation ring of $\ol{\rho}$ and let $R$ be the descent of
$R^{\Box}$ from $R^{\Box, \univ}$ to $R^{\univ}$; it is a finite $R^{\univ}$-algebra.
\end{itemize}
We then have the following result, taken from the discussion in \cite{KhareWintenberger} before
Corollary~4.7.

\begin{proposition}
Assume $R$ is finite over $\mc{O}$ and each $R_v^{\Box}$ is Cohen--Macaulay.  Then $R$ is flat over $\mc{O}$ and
Cohen--Macaulay.  Furthermore, $R$ is Gorenstein if and only if each $R_v^{\Box}$ is.
\end{proposition}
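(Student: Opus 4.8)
The plan is to deduce the proposition from the Taylor--Wiles--Kisin patching construction together with standard commutative algebra, following the discussion in \cite{KhareWintenberger}. Since the framed global deformation ring $R^{\Box}$ is formally smooth over $R$ --- that is, a formal power series ring over $R$ --- flatness over $\mc{O}$, the Cohen--Macaulay property, and the Gorenstein property all transfer between $R$ and $R^{\Box}$, so it suffices to treat $R^{\Box}$. First I would recall what patching produces: a formal power series ring $R_\infty$ over $R^{\Box}_{\loc}$, a power series ring $S_\infty=\mc{O}\lbb y_1,\dots,y_q\rbb$, a finitely generated $R_\infty$-module $M_\infty$ that is finite free over $S_\infty$ (with $S_\infty$ acting through $R_\infty$), and elements $y_1,\dots,y_q$ of $R_\infty$ such that $R_\infty/(y_1,\dots,y_q)\cong R^{\Box}$ and $M_\infty/(y_1,\dots,y_q)M_\infty$ recovers the patched space of modular forms. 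The integer $q$ is chosen so that $\dim R_\infty-q=1+j$, where $j$ is the number of framing variables; this numerical coincidence is the heart of the method and rests on the Greenberg--Wiles Euler characteristic formula.

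On the local side, $R^{\Box}_{\loc}$ is the completed tensor product over $\mc{O}$ of the rings $R_v^{\Box}$, each of which is $\mc{O}$-flat and Cohen--Macaulay by hypothesis. A completed tensor product over $\mc{O}$ of $\mc{O}$-flat Cohen--Macaulay complete local $\mc{O}$-algebras is again $\mc{O}$-flat and Cohen--Macaulay: it is flat over each factor, hence over $\mc{O}$; its reduction modulo $\pi$ is the completed tensor product over $k$ of the Cohen--Macaulay special fibres, and the structure map from one such factor to it is flat local with that factor and the closed fibre Cohen--Macaulay, so the total ring is Cohen--Macaulay; and a local ring with a nonzerodivisor whose quotient is Cohen--Macaulay is itself Cohen--Macaulay. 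The same reasoning, using the descent and ascent of the Gorenstein condition along flat local maps, shows that $R^{\Box}_{\loc}$ --- and hence the power series ring $R_\infty$ --- is Gorenstein if and only if every $R_v^{\Box}$ is.

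The crux is to show that $y_1,\dots,y_q$ is a regular sequence on $R_\infty$. As $R_\infty$ is Cohen--Macaulay it suffices to show these elements are part of a system of parameters, i.e.\ that $\dim R^{\Box}=\dim R_\infty-q$. The inequality ``$\ge$'' is Krull's height theorem. For ``$\le$'': $R^{\Box}$ is a power series ring over $R$ in $j$ variables, so $\dim R^{\Box}=\dim R+j$; since $R$ is finite over $\mc{O}$ we have $\dim R\le 1$, whence $\dim R^{\Box}\le 1+j=\dim R_\infty-q$ by the numerical coincidence. Thus $y_1,\dots,y_q$ is $R_\infty$-regular, and moreover $\dim R=1$, so $R$, being finite over $\mc{O}$, is $\mc{O}$-torsion-free, i.e.\ $\mc{O}$-flat. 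Running the same dimension count modulo $\pi$ shows $\pi,y_1,\dots,y_q$ is likewise part of a system of parameters of $R_\infty$, hence $\pi$ is a nonzerodivisor on $R^{\Box}=R_\infty/(y_1,\dots,y_q)$. Therefore $R^{\Box}$ is the quotient of the Cohen--Macaulay ring $R_\infty$ by a regular sequence, hence Cohen--Macaulay and $\mc{O}$-flat, and so is $R$. Finally, since the $y_i$ lie in the maximal ideal, $R^{\Box}$ is Gorenstein if and only if $R_\infty$ is \cite[Prop.~3.1.19]{BrunsHerzog}, if and only if every $R_v^{\Box}$ is by the previous paragraph; transferring back across $R\to R^{\Box}$ gives the equivalence for $R$.

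The main obstacle is the dimension bookkeeping of the previous paragraph: one must arrange the patching data so that $\dim R_\infty-q$ equals the minimal possible dimension $1+j$ of $R^{\Box}$, and then invoke the hypothesis that $R$ is finite over $\mc{O}$ at precisely the point where it forces $R^{\Box}$ to attain this minimum, so that the $y_i$ are compelled to form a regular sequence. The patched module $M_\infty$ enters only in justifying that the machinery applies and that the relevant hypotheses (notably the finiteness of $R$ over $\mc{O}$) can be arranged; once $R^{\Box}$ has been realized as $R_\infty$ modulo a regular sequence, everything else is formal commutative algebra.
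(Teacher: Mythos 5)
Your argument has the right commutative--algebraic spine, and you correctly identify the two decisive inputs: a presentation of $R^{\Box}$ as a quotient of a power series ring over $R^{\Box}_{\loc}$ by a sequence whose length is controlled by the Greenberg--Wiles formula, and the finiteness of $R$ over $\mc{O}$ forcing that sequence to be a system of parameters (hence, by Cohen--Macaulayness of $R^{\Box}_{\loc}$, a regular sequence). The conclusion then follows exactly as you say. However, you route this through the full Taylor--Wiles--Kisin patching apparatus ($R_\infty$, $S_\infty$, $M_\infty$), and this is both heavier machinery than needed and not quite what that machinery delivers. Patching requires modularity, which the proposition does not assume, and the patched ring is not literally a power series ring over $R^{\Box}_{\loc}$ with $R^{\Box}$ as a quotient by the augmentation variables --- that clean presentation has to be supplied from elsewhere. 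The paper instead uses only \cite[Prop.~4.1.5]{Kisin3}, a purely Galois--cohomological fact (which indeed rests on the Greenberg--Wiles Euler characteristic computation you single out as ``the heart of the method''): there is a presentation
\begin{displaymath}
R^{\Box,\univ}=R^{\Box,\univ}_{\loc}\lbb x_1,\dots,x_{r+n-1}\rbb/(f_1,\dots,f_{r+s}),
\end{displaymath}
which after base change to $R^{\Box}_{\loc}$ gives exactly the presentation your $R_\infty/(y_i)\cong R^{\Box}$ was meant to supply, with no patching, no $M_\infty$, and no modularity. Also, once one writes $R^{\Box}=R\lbb T_1,\dots,T_{4n-1}\rbb$, the paper observes directly that $T_1,\dots,T_{4n-1},f_1,\dots,f_{r+s},p$ is a system of parameters for $R^{\Box}_{\loc}\lbb x\rbb$, so the CM, flatness, and Gorenstein transfers all fall out in a single stroke; your separate passes to handle $\dim R\le 1$ and then $\pi$-regularity are harmless but less economical. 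So: the commutative algebra is right, but you should replace the appeal to patching with the appeal to Kisin's presentation, which is the correct and unconditional source of the input you need.
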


\begin{proof}
By \cite[Prop.~4.1.5]{Kisin3}, we have a presentation
\begin{displaymath}
R^{\Box, \univ}=R^{\Box, \univ}_{\loc} \lbb x_1, \ldots, x_{r+n-1} \rbb/(f_1, \ldots, f_{r+s}),
\end{displaymath}
where $n=\# \Sigma$, $s=[F:\Q]$ and $r$ is a non-negative integer.  Tensoring over $R^{\Box, \univ}_{\loc}$
with $R^{\Box}_{\loc}$ gives a presentation
\begin{displaymath}
R^{\Box}=R^{\Box}_{\loc} \lbb x_1, \ldots, x_{r+n-1} \rbb/(f_1, \ldots, f_{r+s}).
\end{displaymath}
This shows that $R^{\Box}$ has dimension at least $4n$.  Since $R^{\Box}$ is a power series ring over $R$ in
$4n-1$ variables and $R$ is finite over $\mc{O}$, we see that $R$ has dimension 1 and $R^{\Box}$ has dimension $4n$.
Furthermore, writing $R^{\Box}=R\lbb T_1, \ldots, T_{4n-1} \rbb$, we see that $T_1, \ldots, T_{4n-1}, f_1, \ldots,
f_{r+s}, p$ is a system of parameters for $R^{\Box}_{\loc} \lbb x_1, \ldots, x_{r+n-1} \rbb$.  Since each
$R^{\Box}_v$ is Cohen--Macaulay, so too is $R^{\Box}_{\loc} \lbb x_1, \ldots, x_{r+n-1} \rbb$, and it follows that
$T_1, \ldots, T_{4n-1}, f_1, \ldots, f_{r+s}, p$ is a regular sequence.  This shows that $R^{\Box}$ is $\mc{O}$-flat
and Cohen--Macaulay, and furthermore that $R^{\Box}$ is Gorenstein if and only if each $R^{\Box}_v$ is.  Finally, note
that $R^{\Box}$ is a power series ring over $R$, and so all these properties can be transferred to $R$.
\end{proof}

\begin{remark}
Finiteness of $R$ is known in many cases.  When $\ol{\rho}$ is modular, one can often obtain finiteness of $R$ using
the Taylor--Wiles argument as modified by Kisin.  Even without modularity it is often still possible to obtain
finiteness by using potential modularity.
\end{remark}

The following proposition is a very special case, showing how the above proposition can be combined with the
main results of this paper.

\begin{proposition}
Suppose $\Sigma$ consists exactly of the primes over $p$ and that for each $v \in \Sigma$ the local representation
$\ol{\rho} \vert_{G_{F_v}}$ is trivial.  For $v \in \Sigma$, let $R^{\Box}_v$ be one of the rings $\wt{R}_{\ast}$
constructed in \S \ref{ss:rtilde}.  Then, assuming $R$ is finite over $\mc{O}$, it is $\mc{O}$-flat, Cohen--Macaulay
and not Gorenstein.
\end{proposition}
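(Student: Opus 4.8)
The plan is to obtain this proposition as a formal consequence of the preceding one together with Theorem~\ref{thm2}, so that the work is essentially bookkeeping.

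First I would check that the rings $R^{\Box}_v=\wt{R}_{\ast}$ are admissible choices in the sense demanded by the preceding proposition, i.e.\ that each is a finite $R_v^{\Box,\univ}$-algebra which is $\mc{O}$-flat and equidimensional of dimension $[F_v:\Q_p]+4$. Every $v\in\Sigma$ lies over $p$, and $\ol{\rho}|_{G_{F_v}}$ is trivial with cyclotomic determinant, so the local picture is precisely the one analyzed in \S\ref{s:local}: the ring $\wt{R}^{\univ}_v$, obtained from $R_v^{\Box,\univ}$ by adjoining a root $\alpha$ of the characteristic polynomial of $\phi$, is module-finite over $R_v^{\Box,\univ}$, and $\wt{R}_{\ast}$ is a quotient of $\wt{R}^{\univ}_v$; hence $\wt{R}_{\ast}$ is a finite $R_v^{\Box,\univ}$-algebra. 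The proposition of \S\ref{ss:rtilde} records that $\wt{R}_{\ast}$ is a domain, $\mc{O}$-flat, and equidimensional of dimension $d+4$ with $d=[F_v:\Q_p]$, so all the numerical and flatness hypotheses are met. Here one should also confirm that the convenience normalization $\chi(\phi)=1$ of \S\ref{ss:rtilde} can be arranged, and that the fixed-determinant conventions of the local and global set-ups agree; this is routine.

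Next I would feed this into the machinery. By Theorem~\ref{thm2}, each $\wt{R}_{\ast}$ is Cohen--Macaulay (and normal, though normality is not needed here) and not Gorenstein. Since $R$ is finite over $\mc{O}$ by hypothesis and each $R^{\Box}_v$ is Cohen--Macaulay, the preceding proposition immediately gives that $R$ is $\mc{O}$-flat and Cohen--Macaulay. That same proposition states that $R$ is Gorenstein if and only if every $R^{\Box}_v$ is; since $\Sigma$ contains at least one prime above $p$ and each chosen $\wt{R}_{\ast}$ fails to be Gorenstein, we conclude that $R$ is not Gorenstein.

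There is no genuine obstacle to overcome: all of the substance already resides in Theorem~\ref{thm2} and in the commutative-algebra analysis of the global patching presentation carried out in the preceding proposition. The only mild subtlety is matching conventions, as noted above, so that the hypotheses of that proposition genuinely apply to the rings $\wt{R}_{\ast}$.
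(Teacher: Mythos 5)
Your argument is correct and is precisely what the paper intends: the paper gives no explicit proof of this proposition, presenting it as an immediate special case obtained by feeding the rings $\wt{R}_{\ast}$ (which by the proposition of \S\ref{ss:rtilde} are finite $\mc{O}$-flat $R_v^{\Box,\univ}$-algebras of the right dimension, and by Theorem~\ref{thm2} are Cohen--Macaulay and not Gorenstein) into the preceding proposition. Your extra care in checking the hypotheses of the preceding proposition and in flagging the $\chi(\phi)=1$ normalization is reasonable diligence but not a divergence from the intended route.
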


\subsection{An $R=\T$ theorem}

Assume that $\ol{\rho}$ is modular.  Then one can define a Hecke algebra $\T$ and a surjection $R \to \T$.  The
original method of Taylor and Wiles shows that this map is an isomorphism in certain situations.  Kisin's modification
of the method applies in greater generality, but only shows that $R[1/p] \to \T[1/p]$ is an isomorphism.

We simply remark here that if one knows that $R$ is $\mc{O}$-flat, then knowing that $R[1/p] \to \T[1/p]$ is an
isomorphism implies that the map $R \to \T$ is an isomorphism.  Thus when $\ol{\rho}$ is trivial at the places
above $p$ and one uses the $\wt{R}_{\ast}$ local deformation conditions, one can expect to obtain an $R=\T$ theorem.

\end{document}